\newcommand{\pointsize}{11pt}
   \renewcommand{\headrulewidth}{0pt} 
   \renewcommand{\headrulewidth}{0.4pt}
   \renewcommand{\headrulewidth}{0.4pt}
   \renewcommand{\headrulewidth}{0.4pt}
\numberwithin{figure}{chapter} 
\numberwithin{table}{chapter}
\numberwithin{equation}{chapter}
\numberwithin{section}{chapter}
\newtheorem{theorem}{Theorem}[section]
\newtheorem{proposition}{Proposition}[section]
\newtheorem{conjecture}{Conjecture}[section]
\newtheorem{lemma}[theorem]{Lemma}
\newtheorem*{remark*}{Remark}
\newtheorem*{theorem*}{Theorem}
\newtheorem{question}{Question}[section]
\newtheorem*{question*}{Question}
\newtheorem{definition}{Definition}[section]
\newtheorem*{definition*}{Definition}
\newenvironment{customthm}[1]
  {\innercustomthm}
  {\endinnercustomthm}
\begin{document}

   \frontmatter

   \pagestyle{prelim}
   
   %
   \fancypagestyle{plain}{%
      \fancyhf{}
      \cfoot{-\thepage-}
   }%
   \begin{center}
   \null\vfill
   \textbf{%
      Nearly Finitary Matroids
   }%
   \\
   \bigskip
   By \\
   \bigskip
   Patrick C. Tam \\
   DISSERTATION \\
   \bigskip
   Submitted in partial satisfaction of the requirements for the
   degree of \\
   \bigskip
   DOCTOR OF PHILOSOPHY \\
   \bigskip
   in \\
   \bigskip
   MATHEMATICS \\
   \bigskip
   in the \\
   \bigskip
   OFFICE OF GRADUATE STUDIES \\
   \bigskip        
   of the \\
   \bigskip
   UNIVERSITY OF CALIFORNIA \\
   \bigskip
   DAVIS \\
   \bigskip
   Approved: \\
   \bigskip
   \bigskip
   \makebox[3in]{\hrulefill} \\
   Eric Babson \\
   \bigskip
   \bigskip
   \makebox[3in]{\hrulefill} \\
   Jesús De Loera\\
   \bigskip
   \bigskip
   \makebox[3in]{\hrulefill} \\
   Matthias Köppe\\
   \bigskip
   Committee in Charge \\
   \bigskip
   2018 \\
   \vfill
\end{center}

   \newpage
	
	 \thispagestyle{empty}
	 \begin{titlepage}
	 \vspace*{50em}
	 \begin{center}
		 \copyright \ Patrick C. Tam, 2018.  All rights reserved.  
	 \end{center}
	 \end{titlepage}
	 \newpage
	 \stepcounter{page}
	
	 \vspace*{20em}
	 \begin{center}
	   To...
	 \end{center}
	 \newpage
   
   %
   \doublespacing
   
   \tableofcontents
   \newpage
   
   {\singlespacing
   \begin{flushright}
      Patrick C. Tam \\
      March 2018 \\
      Mathematics \\
   \end{flushright}
}

\bigskip

\begin{center}
   Nearly Finitary Matroids \\
\end{center}

\section*{Abstract}

In this thesis, we study nearly finitary matroids by introducing new definitions and prove various properties of nearly finitary matroids.  In 2010, an axiom system for infinite matroids was proposed by Bruhn et al. We use this axiom system for this thesis. In Chapter 2, we summarize our main results after reviewing historical background and motivation. In Chapter 3, we define a notion of spectrum for matroids. Moreover, we show that the spectrum of a nearly finitary matroid can be larger than any fixed finite size. We also give an example of a matroid with infinitely large spectrum that is not nearly finitary. Assuming the existence of a single matroid that is nearly finitary but not $k$-nearly finitary, we construct classes of matroids that are nearly finitary but not $k$-nearly finitary. We also show that finite rank matroids are unionable. In Chapter 4, we will introduce a notion of near finitarization. We also give an example of a nearly finitary independence system that is not $k$-nearly finitary. This independence system is not a matroid. In Chapter 5, we will talk about Psi-matroids and introduce a possible generalization. Moreover, we study these new matroids to search for an example of a nearly finitary matroid that is not $k$-nearly finitary. We have not yet found such an example. In Chapter 6, we will discuss thin sums matroids and consider our problem restricted to this class of matroids.

Our results are motivated by the open problem concerning whether every nearly finitary matroid is $k$-nearly finitary for some $k$.
   \newpage
   
   \section*{Acknowledgments}
   I would like to thank my adviser, Eric Babson, for his patience and continual guidance. I particularly appreciate his kindness and great composure. I would also like to thank my committee for their helpful feedback on this dissertation. I would also like to thank Nathan Bowler and Hiu Fai Law for useful discussions. I also thank the Infinite Matroid community for developing the theory that makes my research possible.  I would also like to thank my friends Jessica Shum, Michael Glaros, Eric Samperton, Cecilia Dao, John Murray, Alexander Berrian, Alvin Moon, Ryan Halabi, Christopher David Westenberger, Evan Smothers and Colin Hagemeyer for making my time at Davis more enjoyable. Lastly, I want to thank both of my parents for their continual support of me throughout my entire life.
   
   \mainmatter
   
   \pagestyle{maintext}
   
   %
   \fancypagestyle{plain}{%
      \renewcommand{\headrulewidth}{0pt}
      \fancyhf{}
      \cfoot{\thepage}
   }%
   
   \chapter{Introduction}
   \label{ch:IntroductionLabel}
   Matroids generalize the notion of linear independence in linear algebra. Matroid axioms were first developed for finite matroids. Finite matroids have been extensively studied since 1935. Many examples of finite matroids come from finite graphs and these have been extensively studied. It is natural to ask whether there is a good notion of a matroid that works for infinite graphs and other infinite structures. For a long time, there were several reasonable proposed axiomatizations of infinite matroids. None of them seemed to cover every important aspect of finite matroid theory. It seemed hard to have one notion of infinite matroids which has bases, circuits, and duality familiar from finite matroids. Let us review the axioms for finite matroids.

For a set $I \subset E$ and $x \in E$, we write $x$, $I + x$, and $I - x$ for $\{ x \}$, $I \cup \{ x \}$ , and $I \setminus \{ x \}$ respectively.

\begin{definition}
Let $E$ be some finite set. A finite matroid $M$ is a pair $(E, \mathcal{L})$ with $\mathcal{L} \subset 2^E$ satisfying the following properties:
\begin{itemize}
	\item 1: $\emptyset \in \mathcal{L}$.
	\item 2: If $B \in \mathcal{L}$ and $A \subset B$, then $A \in \mathcal{L}$.
	\item 3: If $A, B \in \mathcal{L}$ and $|A| < |B| < \infty$, then there exists $b \in B \setminus A$ such that $A+b \in \mathcal{L}$.
\end{itemize}
\end{definition}

The simplest way to generalize this construction is to allow $E$ to be infinite and leave axioms $1$ to $3$ unchanged. In 1976, Welsh introduced this generalization as a $\textit{pre-independence space}$. Welsh's definition of pre-independence spaces can be found in ~\cite{Welsh}.

Pre-independence spaces do not necessarily have maximal independent sets. For example, consider $I=(\mathbb{N}, \mathcal{L} (I))$ with 

\begin{equation*}
\mathcal{L} (I) = \{ S \subset \mathbb{N} \colon |S| < \infty \}.
\end{equation*}

We can check that $I$ satisfies axioms $1$ to $3$ and gives a pre-independence space. It is clear that any independent set must be a proper and finite subset of $\mathbb{N}$ and can be augmented to a larger independent set by adding any additional element. Thus, no element of $\mathcal{L} (I)$ is maximal with respect to set inclusion.

Thus, pre-independence spaces are unsatisfactory as an infinite generalization of finite matroids.

Now, we introduce a set of matroid axioms developed in ~\cite{Axm} in 2010 that captures essential aspects of finite matroid theory while allowing for infinite matroids to be defined. Examples of these infinite matroids that can be created from infinite graphs will be studied in this thesis.

Now we let $E$ to be any set and possibly infinite.
\begin{definition}
A matroid $M$ is a pair $(E,\mathcal{L})$ with $\mathcal{L} \subset 2^{E}$  satisfying the following properties:
\begin{itemize}
	\item I1: $\emptyset \in \mathcal{L}$.
	\item I2: If $B \in \mathcal{L}$ and $A \subset B$, then $A \in \mathcal{L}$.
	\item I3: If $B$ is a maximal element of $\mathcal{L}$ and $A$  is a non-maximal element of $\mathcal{L}$, then there exists $b \in B \setminus A$ such that $A+ b \in \mathcal{L}$.
	\item I4: If $A \in \mathcal{L}$ and $A \subset X \subset E$, then the set $\{S \in \mathcal{L} : A \subset S \subset X\}$ has a maximal element.
\end{itemize}
\end{definition}
Here, we use set inclusion as our partial ordering when we talk about maximality and minimality. Elements of $\mathcal{L}$ are called independent sets. Maximal elements of $\mathcal{L}$ are also called $\textit{bases}$. These first two axioms are familiar from finite matroids. The third axiom is different from our third axiom for finite matroids because there could be infinite independent sets. It may be possible to extend a countable independent set by another countable independent set under these axioms. Thus, cardinality does not give us enough information to determine whether we can extend an independent set by another one. The fourth axiom ensures that every matroid has a base and every matroid minor has a base. A matroid minor will be defined later on.

Elements of $2^E \setminus \mathcal{L}$ are called $\textit{dependent sets}$. A minimal dependent set is called a $\textit{circuit}$. A circuit with only one element is called a $\textit{loop}$. It is possible to define a matroid $M=(E, \mathcal{L})$ by specifying a suitable set of circuits and taking the independent sets to be subsets of $E$ that contain no circuit. Any pair $M=(E, \mathcal{L})$ that satisfies the first two axioms is called an independence system. Although infinite matroids were studied for several decades, this axiom system was formulated as recently as 2010. Under these axioms, every matroid has a base and a dual. Let $M=(E, \mathcal{L})$ be a matroid. We define 
\begin{equation*}
\mathcal{L}^{*} := \{ S \subset E \colon \exists B \in \mathcal{L}^{\mathrm{max}} \text{ s.t. }  S \subset E \setminus  B \}
\end{equation*}
where $\mathcal{L}^{\mathrm{max}}$ is the set of bases of $M$. 
Then $M^{*} = (E, \mathcal{L}^{*})$ is the $\textit{dual}$ of $M$. The authors who developed this axiom system showed that this dual is indeed a matroid. In fact, this duality is an involution and $M^{**} =M$.
A circuit of $M^*$ is called a $\textit{cocircuit}$ of $M$. A loop of $M^*$ is also known as a $\textit{coloop}$ of $M$. Equivalently, a coloop is an element of $M$ that is not contained in any circuit of $M$ and thus contained in every base of $M$.

We will now define matroid minors.

Let $M=(E, \mathcal{L})$ be a matroid. Let $X \subset E$. Define $X^C : = E \setminus X$.  We define $M|X : = M - X^C := (X, \mathcal{L} \cap 2^X)$ as the $\textit{restriction of }M \textit{ to } X$. It is also called the $\textit{deletion of } M \textit{ by } X^C$. We define $M.X : =  M / X^C : =(M^* | X)^*$ and name it the $\textit{contraction of }M \textit{ to } X$. We also call $M.X$ the $\textit{contraction of }M \textit{ by } X^C$. Independent sets of $M.X$ are the sets $I \subset X$ such that $I \cup I' \in \mathcal{L}$ for every independent set $I'$ of $M - X$. Each of these constructions were shown to be matroids in ~\cite{Axm}. The result of any sequence of contractions and restrictions of some matroid $M$ is called a $\textit{minor}$ of $M$.

There are several different ways to axiomatize these structures. Another equivalent axiomatization we will find useful is the following. Consider $M = (E, \mathcal{L})$ with $\mathcal{L} \subset 2^E$ and let $\mathcal{B}:=\mathcal{L}^{\mathrm{max}}$ be alternative notation for the set of maximal elements of $\mathcal{L}$. $\mathcal{B}$ is also known as the set of bases of $M$. A set $S \subset 2^E$ is called $\mathcal{B}$-independent if there is some $B \in \mathcal{B}$ with $S \subset B$. If $\mathcal{B}$ satisfies the following properties, then $M$ is a matroid in the above sense.
\begin{itemize}
	\item B1: $\mathcal{B} \neq \emptyset$.
	\item B2: Whenever $B_1, B_2 \in \mathcal{B}$ and $x \in B_1 \setminus B_2$, there is an element $y$ of $B_2 \setminus B_1$ such that $(B_1 - x)+y \in \mathcal{B}$.
	\item B3: The set of $\mathcal{B}$-independent sets satisfies axiom I4.
\end{itemize}

Axiom B2 is also known as the base exchange axiom.
The following review of historical development borrows from ~\cite{Axm}.

In the 1960s and 1970s, Higgs ~\cite{Bmatroid} and Oxley ~\cite{OxleyI, OxleyII, OxleyIII} developed a different axiom system that described the same class of structures as this one. They named these structures `$B$-matroids'. Although Oxley found a set of axioms for these ‘$B$-matroids’ resembling a mixture of independence and base axioms, it remained an open problem whether an axiom set exists for infinite matroids which allows for minors and duality familiar from finite matroids. 

\begin{definition}
A set system $M=(E, \mathcal{L})$ with $\mathcal{L} \subset 2^E$  is a $B$-matroid if it satisfies the following axioms. 
\begin{itemize}
	\item I1: $\emptyset \in \mathcal{L}$
	\item I2: If $B \in \mathcal{L}$ and $A \subset B$, then $A \in \mathcal{L}$.
	\item B2: Whenever $B_1, B_2 \in \mathcal{B}$ and $x \in B_1 \setminus B_2$, there is an element $y$ of $B_2 \setminus B_1$ such that $(B_1 - x)+y \in \mathcal{B}$.
	\item I4: If $A \in \mathcal{L}$ and $A \subset X \subset E$, then the set $\{S \in \mathcal{L} : A \subset S \subset X\}$ has a maximal element.
\end{itemize}
\end{definition}

A proof that a set system $M=(E,\mathcal{L})$ gives a matroid if and only if it gives a $B$-matroid can be found in ~\cite{Axm}. 

A special class of matroids which are known as the finitary matroids are better understood than general infinite matroids. 
\begin{definition}
A matroid $M$ is $\textit{finitary}$ if a set $S$ is independent in $M$ if and only if all finite subsets of $S$ are independent.
\end{definition}
For every matroid $M = (E,\mathcal{L})$, there exists an associated finitary matroid $M^{\mathrm{fin}}=(E,\mathcal{L}^{\mathrm{fin}})$ whose independent sets are subsets $S$ of $E$ such that every finite subset of $S$ is independent in $M$. The proof of this relies on Zorn's lemma and can be found in ~\cite{Axm}. $M^{\mathrm{fin}}$ is also known as the $\textit{finitarization}$ of $M$. 

To understand finitarization, consider the matroid $M= ( \mathbb{N}, \mathcal{L})$ where

\begin{equation*}
\mathcal{L} := \{ S \subset \mathbb{N} \colon |\mathbb{N} \setminus S| \geq 2 \}.
\end{equation*}

Since all finite subsets of $\mathbb{N}$ are independent in $M$, $M^\mathrm{fin} = ( \mathbb{N}, 2^{\mathbb{N}})$. Here, $M^\mathrm{fin}$ has more independent sets than $M$. We can make this precise in the following way.

Every base $B$ of $M$ extends to a base $F$ of $M^\mathrm{fin}$. To see why, note that any base $B$ is independent in $M^\mathrm{fin}$. Because of our fourth independence axiom for matroids, the set $\{ S \in \mathcal{L} (M^\mathrm{fin}) \colon B \subset S \subset E \}$ has a maximal element $F$. This maximal element is a base of $M^\mathrm{fin}$. 

Conversely, every base $F$ of $M^\mathrm{fin}$ contains a base $B$ of $M$. To see why, consider $F^* := E \setminus F$. Then $F^*$ is a base of $M^{\mathrm{fin}*}$ . Independent sets of $M^{\mathrm{fin}*}$ are also independent in $M^*$. So $F^*$ extends to some base $B^*$ in $M^*$ by the fourth independence axiom. $B := E \setminus B^*$ is then a base in $M$ and $F$ contains $B$.  Unfortunately, the class of finitary matroids is not closed under duality. The authors of ~\cite{Union} define the class of nearly finitary matroids which is still not closed under duality but extends the class of finitary matroids. For more general independence systems, finitarization is still defined but will not necessarily give you a matroid. For example, consider $I:=(\{1,2,3\},\mathcal{L}(I))$ where a set is defined to be independent if it is a subset of $\{1\}$ or a subset of $\{2,3\}$. $I^\mathrm{fin}=I$ is not a matroid. It is possible to have an independence system $N$ that is not a matroid but whose finitarization is nonetheless still a matroid. See Section 4.1 of Chapter 4 for such an example.

The authors of ~\cite{Union} introduced the class of nearly finitary matroids and the class of $k$-nearly finitary matroids.

\begin{definition}
A matroid $M$ is called nearly finitary if whenever a base $F$ in $M^\mathrm{fin}$ contains a base $B$ in $M$, their set difference is a finite set.
\end{definition}

\begin{definition}
For an integer $k$, a matroid is called $k$-nearly finitary if whenever a base $F$ in $M^\mathrm{fin}$ contains a base $B$ in $M$, their set difference has cardinality bounded by $k$.
\end{definition}

\begin{definition}
We say that a matroid is exactly $k$-nearly finitary if it is $k$-nearly finitary but not $(k-1)$-nearly finitary.
\end{definition}

Note that no matroid is $-1$-nearly finitary. Finitary matroids are exactly $0$-nearly finitary. Since matroids are closed under duality, the dual of a finitary matroid is a matroid. However, this dual does not need to be finitary. So finitary matroids are not closed under duality.

We can extend the concepts of nearly finitary and $k$-nearly finitary to independence systems in the natural way as follows.

\begin{definition}
An independence system $I$ is called nearly finitary if whenever a base $F$ in $I^\mathrm{fin}$ contains a base $B$ in $I$, their set difference is a finite set. 
\end{definition}

\begin{definition}
For an integer $k$, a independence system is called $k$-nearly finitary if whenever a base $F$ in $I^\mathrm{fin}$ contains a base $B$ in $I$, their set difference has cardinality bounded by $k$. We say that an independence system is exactly $k$-nearly finitary if it is $k$-nearly finitary but not $(k-1)$-nearly finitary.
\end{definition}

If an independence system $I$ is $k$-nearly finitary for some $k \in \mathbb{N}$, we sometimes write ``$I$ is $k$-nearly finitary" for shorthand. Likewise, if an independence system $I$ is not $k$-nearly finitary for any $k \in \mathbb{N}$, we sometimes write ``$I$ is not $k$-nearly finitary" for shorthand.

We remark that our definitions of nearly finitary and $k$-nearly finitary are slightly different from the ones that appear in ~\cite{Union}. The authors of ~\cite{Union} define an independence system $I$ to be nearly finitary if every base $F$ in $I^\mathrm{fin}$ contains a base $B$ in $I$ such that their set difference is a finite set. For an integer $k$, they define an independence system to be $k$-nearly finitary if every base $F$ in $I^\mathrm{fin}$ contains a base $B$ in $I$ such that their set difference has cardinality bounded by $k$. Before Section 3.1 of Chapter 3, we will show that our definitions of nearly finitary and $k$-nearly finitary are equivalent to the definitions in ~\cite{Union} for matroids. For general independence systems, our definitions are different from those in ~\cite{Union}. In Section 4.1 of Chapter 4, we will show an example of an independence system that is $k$-nearly finitary using the definition in ~\cite{Union} but is not $k$-nearly finitary in our sense.

In this thesis, we primarily study the problem about whether every nearly finitary matroid is $k$-nearly finitary as introduced in ~\cite{Inter}. We henceforth will refer to this problem as problem $P1$. A priori, there could be some nearly finitary matroid with no uniform bound $k$. However, all known examples of nearly finitary matroids are $k$-nearly finitary. There is no known proof that all nearly finitary matroids are $k$-nearly finitary. The motivation for studying problem $P1$ comes from Halin's theorems for infinite graphs in ~\cite{Halin}.

Before we state Halin's theorems, let us review some terminlogy. A graph $G=(V,E)$ is a pair of sets with $E \subset (V \times V)/ \sim$ where $\sim$ is defined by $(v_1,v_2) \sim (v_2,v_1)$. In other words, we do not care about the order of the pair of vertices. We say that $V$ is the vertex set of $G$ and $E$ is the edge set of $G$. Let $(v_1, v_2) \in E$. We will denote this edge by $v_1v_2$ for a shorthand notation. A ray $R$ is an infinite sequence of edges of the form $(v_1v_2, v_2v_3, v_3v_4, \dots )$ such that $v_i \neq v_j$ if $i$ and $j$ are two different integers. We say that two rays $R_1$ and $R_2$ are $\textit{vertex-disjoint}$ if they do not share a vertex. We say that a set of rays $\mathcal{R}$ is $\textit{vertex-disjoint}$ if every pair of rays in $\mathcal{R}$ is vertex-disjoint. Similarly, we say that two rays $R_1$ and $R_2$ are $\textit{edge-disjoint}$ if they do not share an edge. We say that a set of rays $\mathcal{R}$ is $\textit{edge-disjoint}$ if every pair of rays in $\mathcal{R}$ is edge-disjoint. If a set of rays is vertex-disjoint, then it is edge-disjoint. It is possible for a pair of rays to be edge-disjoint without being vertex-disjoint. Consider the graph $G=(\mathbb{Z},E)$ where $E:=\{(i,i+1) \colon i \in \mathbb{Z}\}$. Then $\{ (0,1), (1,2), (2, 3) ... \}$ is edge disjoint from $\{ (-1,0), (-2,-1), (-3,-2) ...\}$. However, these two rays share the vertex $0$. Halin's vertex-disjoint theorem states that if an infinite graph $G$ contains some set of $k$ vertex-disjoint rays for every $k \in \mathbb{N}$, then $G$ contains some set of infinitely many vertex-disjoint rays. Halin's edge-disjoint theorem states that if an infinite graph $G$ contains some set of $k$ edge-disjoint rays for every $k \in \mathbb{N}$, then $G$ contains some set of infinitely many edge-disjoint rays.

To see an example demonstrating Halin's theorems, consider the graph $G$ in Figure 1.1 on page $\pageref{Halin}$. For all $k \in \mathbb{N}$, $G$ has some set of $k$ edge-disjoint rays starting from point $A$. By Halin's edge-disjoint theorem, $G$ has some set of infinitely many edge-disjoint rays. By removing edges adjacent to point $A$, we can find a set of $k$ vertex-disjoint rays in $G$ for all $k \in \mathbb{N}$. By Halin's vertex-disjoint theorem, $G$ has some set of infinitely many vertex-disjoint rays. We can also verify the conclusions of Halin's theorems directly in these examples. 

\begin{figure}
\includegraphics{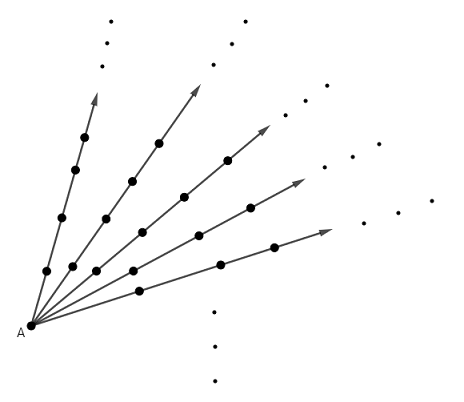}
\caption{Example demonstrating Halin's Theorems} \label{Halin}
\end{figure}

In ~\cite{Inter}, the authors solve problem $P1$ for two special subclasses of infinite matroids. All nearly finitary algebraic cycle matroids and nearly finitary topological cycle matroids are $k$-nearly finitary for some $k$. The main tool used to prove this was Halin's theorem for vertex disjoint rays. It is in this sense that problem $P1$ is a possible generalization of Halin's infinite grid theorem. Problem $P1$ has also been solved affirmatively for several other classes of matroids. In ~\cite{Gammoids}, the authors prove that every nearly finitary gammoid and every nearly finitary transversal matroid are $k$-nearly finitary.

Suppose $B_1$ and $B_2$ are bases of some matroid $M$ and suppose $|B_1| < \infty$. If $B_1=B_2$, then they have the same cardinality. If not, then neither base is a proper subset of the other. So there is some $b_2 \in B_2 \setminus B_1$. Then by using the base exchange axiom B2, there is some $b_1 \in B_1 \setminus B_2$  such that $S_1 := B_2 + b_1  - b_2$ is a base in $M$. It is clear that $|S_1|=|B_2|$. If $S_1 = B_1$, we stop and we know that $|B_2|=|S_1|=|B_1|$. Otherwise, we continue this process to get a new base $S_2$. Since $|B_1| < \infty$, this process will stop after a finite number of steps and we get $|B_2| = |S_n| = |B_1|$ for some finite integer $n$. Thus, all bases of a matroid either have the same finite cardinality or are all infinite in cardinality.

For a matroid $M$, we define $\mathrm{rank} (M) := |B|$ where $B \in \mathcal{L}^{\mathrm{max}}(M)$. We consider all infinite cardinalities equal to make this rank operation well defined.

Before the modern axiom system in ~\cite{Axm} was introduced, there was a different way to axiomatize finitary matroids. As appears in ~\cite{Axm}, we will define classical finitary matroids. 
\begin{definition}
A classical finitary matroid $M$ is a pair $(E,\mathcal{L})$ with $\mathcal{L} \subset 2^{E}$  satisfying the following properties:
\begin{itemize}
	\item F1: $\emptyset \in \mathcal{L}$.
	\item F2: If $B \in \mathcal{L}$ and $A \subset B$, then $A \in \mathcal{L}$.
	\item F3: If $A,B \in \mathcal {L}$ and $|A| < |B| < \infty $, then there exists $b \in B \setminus A$ such that $A+b \in \mathcal{L}$.
	\item F4: $A \in \mathcal{L}$ if and only if all finite subsets of $A$ are in $\mathcal{L}$.
\end{itemize}
\end{definition}

A proof that classical finitary matroids are matroids in our sense appears in ~\cite{Axm}. This proof crucially relies on Zorn's lemma and thus the Axiom of Choice. Those authors even show that these classical finitary matroids are precisely our finitary matroids assuming the Axiom of Choice.

To help us study problem $P1$, we propose a notion of spectrum for matroids. Let $M$ be a matroid and $M^\mathrm{fin}$ be its finitarization. We define the following:

\begin{equation*}
 Spec(M) := \{|F \setminus B|: F \supset B, F \text{ is a base in } M^\mathrm{fin}, \text{and } B \text{ is a base in } M\}.
\end{equation*}

A nearly finitary matroid is $k$-nearly finitary if and only if its spectrum has finite size. In Chapter 3, we will construct nearly finitary matroids with spectrums of arbitrarily large finite size. We also have an example of a matroid with an infinitely large spectrum that is not nearly finitary. Our examples are all based on the algebraic cycle matroid of the one way infinite ladder. It is a matroid with spectrum $\{0,1\}$. We consider the matroid sum of $n$ copies of this matroid for some examples of nearly finitary matroids with large finite spectrums. We consider the matroid sum of infinitely many copies of this matroid for an example of a matroid with infinitely large spectrum.

We also found a sufficient condition for nearly finitary matroids to be $k$-nearly finitary. We prove that a special class of nearly finitary matroids satisfies this condition in Chapter 3.

On the next page there are two graphs of various classes of matroids in Figure 1.2 and Figure 1.3. Arrows indicate class inclusion with smaller classes pointing towards larger classes. Several of these classes are studied extensively in this dissertation.

\newpage
\vfill
\begin{figure}
\begin{tikzpicture}[scale=0.75,transform shape]
  \tikzstyle{VertexStyle}=[rectangle]
  \Vertex[x=0,y=-0.5]{Finite Graph Cycle}
  \Vertex[x=3,y=3]{Topological Cycle}
  \Vertex[x=0,y=3]{Finitary Cycle}
  \Vertex[x=-3,y=3]{Algebraic Cycle}
  \Vertex[x=0, y=5.2]{Psi}
  \Vertex[x=0, y=7]{Partition}
  \tikzstyle{LabelStyle}=[fill=white,sloped]
  \tikzstyle{EdgeStyle}=[post]
  \Edge(Finite Graph Cycle)(Topological Cycle)
  \Edge(Finite Graph Cycle)(Algebraic Cycle)
  \Edge(Finite Graph Cycle)(Finitary Cycle)
  \Edge(Algebraic Cycle)(Psi)
  \Edge(Topological Cycle)(Psi)
  \Edge(Finitary Cycle)(Psi)
  \Edge(Psi)(Partition)
\end{tikzpicture}
\caption{Graphical Matroid Classes}
\end{figure}
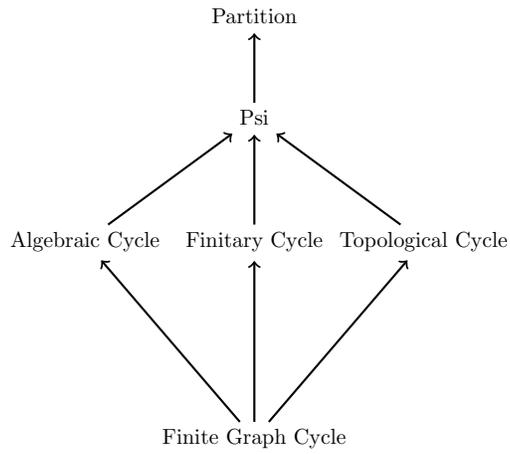

\vfill

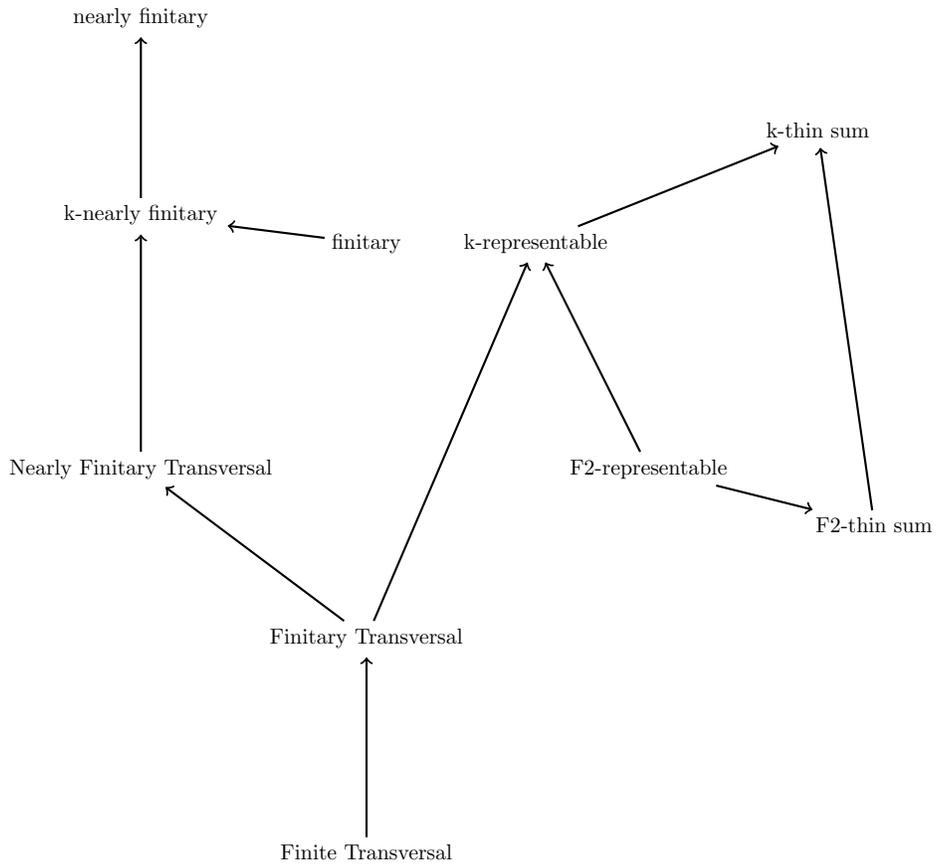
\begin{figure}
\begin{tikzpicture}[scale=0.75,transform shape]
  \tikzstyle{VertexStyle}=[rectangle]
  \Vertex[x=0, y=-3.8]{Finite Transversal}
  \Vertex[x=0,y=0]{Finitary Transversal}
  \Vertex[x=-4,y=3]{Nearly Finitary Transversal}
  \Vertex[x=5,y=3]{F2-representable}
  \Vertex[x=3,y=7]{k-representable}
  \Vertex[x=8,y=9]{k-thin sum}
  \Vertex[x=9,y=2]{F2-thin sum}
  \Vertex[x=0,y=7]{finitary}
  \Vertex[x=-4,y=7.5]{k-nearly finitary}
  \Vertex[x=-4,y=11]{nearly finitary}
  \tikzstyle{LabelStyle}=[fill=white,sloped]
  \tikzstyle{EdgeStyle}=[post]
  \Edge(Finite Transversal)(Finitary Transversal)
  \Edge(Finitary Transversal)(k-representable)
  \Edge(Finitary Transversal)(Nearly Finitary Transversal)
  \Edge(Nearly Finitary Transversal)(k-nearly finitary)
  \Edge(k-nearly finitary)(nearly finitary)
  \Edge(F2-representable)(k-representable)
  \Edge(F2-representable)(F2-thin sum)
  \Edge(k-representable)(k-thin sum)
  \Edge(F2-thin sum)(k-thin sum)
  \Edge(finitary)(k-nearly finitary)
\end{tikzpicture}
\caption{More Matroid Classes}
\end{figure}

\vfill
\clearpage

With an existing matroid $M$, we can generate new matroids in several different ways. These new matroids will give us more examples to consider when we ask whether every nearly finitary matroid is $k$-nearly finitary. One way to generate new matroids will be through the matroid union theorem introduced in ~\cite{Union}. Let $M=(E(M), \mathcal{L}(M))$ and $N=(E(N), \mathcal{L}(N))$ be nearly finitary matroids. Then we can define
\begin{definition}
\begin{equation*}
M \vee N := ( E(M) \cup E(N), \mathcal{L} (M \vee N))
\end{equation*} 
where
\begin{equation*}
\mathcal{L} (M \vee N) := \{ S \cup T \colon S \in \mathcal{L} (M) \text{ and } T \in \mathcal{L} (N) \}.
\end{equation*}
\end{definition}

The theorem tells us that $M \vee N$ is a matroid. This gives us a way to generate large families of nearly finitary matroids from known examples of matroids.

Since we are studying infinite matroids as a generalization of finite matroids, we will also introduce infinite graph theory as a generalization of finite graph theory. Much of our discussion on this matter comes from ~\cite{InfGraph}.

Just as finite graphs give us many examples of finite matroids, infinite graphs will give us many examples of infinite matroids. First, we review one way to get a finite matroid from a finite graph. Let $G:=(E(G),V(G))$ be a finite graph with finitely many edges and finitely many vertices. We can define a matroid $M(G):=(E(G), \mathcal{L} (G))$ where $E(G)$ is the set of edges of $G$ and $\mathcal{L} (G) \subset 2^{E(G)}$ consists of sets of edges that contain no cycle of $G$. Since $G$ is finite, all cycles are finite cycles. This is the $\textit{cycle matroid}$ of $G$. Another kind of matroid we can define is the bond matroid of $G$. Since $G$ is finite, it has some finite number $n_G$ of connected components. Let $M^* (G) := (E(G), \mathcal{L}^* (G))$ where 
\begin{equation*}
\mathcal{L}^*(G) := \{ S \in 2^{E(G)} \colon \text{the graph } (E(G) \setminus S, V) \text{ has } n_G \text{ connected components} \}.
\end{equation*}
We call $M^*(G)$ the $\textit{bond matroid}$ of $G$.
It turns out that $M(G)$ and $M^*(G)$ are dual matroids in this case.

Now we allow $G=(V(G),E(G))$ to be some infinite connected graph and make no assumptions about the cardinalities of the edge and vertex sets. We can define a finitary matroid $M_{fin} (G) := (E(G), \mathcal{L}_{fin} (G))$ where $\mathcal{L}_{fin} (G) \subset 2^{E(G)}$ consists of sets of edges that contain no finite cycle of $G$. Then $M_{fin} (G)$ is a finitary matroid known as the $\textit{finite cycle matroid}$ of $G$. However, because $G$ is infinite there are several possible notions of infinite cycles. One of these notions is known as an algebraic cycle of a graph. We say that a non-empty set of edges $A_C \subset E(G)$ is an $\textit{algebraic cycle}$ if each vertex of the graph $(V(G), A_C)$ has even degree. This is a generalization of a finite cycle since all finite cycles are algebraic cycles. For an example of an algebraic cycle that is not a finite cycle, consider the graph $( \mathbb{Z}, E)$ where

\begin{equation*}
E := \{ ij \colon j=i+1 \in \mathbb{Z} \}.
\end{equation*}

Clearly, $E$ is a nonempty set of edges where each vertex has degree $2$. So $E$ is an algebraic cycle but it is not a finite cycle. For arbitrary graphs $G$, the independence system $M_{AC} (G) := (E(G), \mathcal{L}_{AC} (G))$ where $\mathcal{L}_{AC} (G) \subset 2^{E(G)}$ consists of sets of edges that contain no algebraic cycle of $G$. Unfortunately, $M_{AC} (G)$ is not always a matroid. However, in 1969, Higgs gave a necessary and sufficient condition for when $M_{AC} (G)$ is a matroid in ~\cite{Higgs}. Recall that it is possible to define an independence system by specifying its set of circuits. The set of circuits of an algebraic cycle system of a graph $G$ consists of the elementary algebraic cycles of $G$. An $\textit{elementary algebraic cycle}$ $C_A$ is an algebraic cycle such that if you remove any edge from $C_A$, the new edge set no longer contains an algebraic cycle. We use the term 'elementary' in this way for other kinds of cycles too.

\begin{theorem}
( ~\cite{Higgs} Theorem 5.1)
The elementary algebraic cycles of an infinite graph $G$ are the circuits of a matroid on its edge set $E(G)$ if and only if $G$ contains no subdivision of the Bean graph.
\end{theorem}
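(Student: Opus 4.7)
The plan is to prove both directions by analyzing when the family of elementary algebraic cycles satisfies the circuit axioms (in particular, the circuit elimination axiom) that characterize matroids.

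For the forward direction, I would assume that $G$ contains a subdivision of the Bean graph and construct an explicit failure of circuit elimination. The strategy is to locate, inside the Bean subdivision, two distinct elementary algebraic cycles $C_1$ and $C_2$ that share a single edge $e$ such that $(C_1 \cup C_2) - e$ contains no algebraic cycle at all. The Bean graph is specifically the minimal obstruction constructed so that two natural double rays (regarded as algebraic cycles) overlap in exactly one edge and whose symmetric difference breaks into a set of edges where every edge sits on a path between two vertices of odd degree. This immediately shows that the elementary algebraic cycles fail to be the circuits of any matroid, since circuit elimination must supply an algebraic cycle inside $(C_1 \cup C_2) - e$. Passing from the Bean graph itself to a subdivision costs nothing: subdividing an edge replaces one edge of a cycle by a path, but preserves both the algebraic-cycle property (even degree at every vertex) and the elementary status.

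For the backward direction, I would verify that $M_{AC}(G)$ satisfies axioms I1--I4 whenever $G$ contains no Bean-graph subdivision. Axioms I1 and I2 hold for any independence system obtained by forbidding a collection of sets. For I4, given $A \subset X \subset E(G)$ with $A \in \mathcal{L}_{AC}(G)$, an application of Zorn's lemma to the chain of algebraic-cycle-free subsets between $A$ and $X$ yields a maximal such set, since the union of a chain of edge sets containing no algebraic cycle still contains no algebraic cycle (an algebraic cycle is witnessed by a single set of edges with all even degrees, and any such witness would already appear in some member of the chain once $G$ is locally finite along the witness; a small care is needed for cycles with infinite support, but these reduce to finite degree conditions at each vertex). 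The substantive axiom is I3, which for circuits amounts to the circuit elimination property.

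The main obstacle, and the heart of the proof, is establishing that the absence of a Bean subdivision is precisely what is needed to guarantee circuit elimination: given two distinct elementary algebraic cycles $C_1, C_2$ sharing an edge $e$, I would analyze the graph $H$ induced by $(C_1 \cup C_2) - e$. Every vertex of $H$ other than the two endpoints of $e$ inherits even degree from $C_1$ and $C_2$; the two endpoints of $e$ have odd degree in $H$. To find an algebraic cycle in $H$, I would construct a $\{u,v\}$-path $P$ in $H$ (where $u,v$ are the endpoints of $e$) and show that $H - P$ contains an algebraic cycle, or else directly show $H$ itself contains an algebraic cycle. The key lemma I expect to need is a structure theorem: if circuit elimination fails for two specific elementary algebraic cycles $C_1, C_2$, then the graph $C_1 \cup C_2$, suitably reduced, must contain a subdivision of the Bean graph. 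This reduction is the technical crux, and would proceed by repeatedly suppressing degree-two vertices and pruning odd-degree branches until the obstruction pattern is exposed. Once this structure theorem is in place, the contrapositive gives the backward direction, completing the proof.
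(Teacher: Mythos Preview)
The paper does not prove this theorem; it is cited from Higgs, and the text following the statement only illustrates the forward direction for the Bean graph itself, not for subdivisions, and not via circuit elimination. The paper's illustration shows a direct failure of axiom I3: it exhibits a maximal independent set $H_f$ and a non-maximal independent set $S-e$ such that no element of $H_f \setminus (S-e)$ can be added to $S-e$. Your forward direction instead proceeds via circuit elimination; both are legitimate, though you would still need to carry the argument through subdivisions and note that a failure of circuit elimination really does preclude being a matroid under the Bruhn et al.\ axioms.

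Your backward direction has two genuine gaps. First, the Zorn argument for I4 does not work as stated: the union of a chain of algebraic-cycle-free edge sets can contain an algebraic cycle. Concretely, if $D$ is a double ray with edges $\{e_i : i \in \mathbb{Z}\}$, the finite paths $P_n = \{e_{-n},\dots,e_n\}$ form a chain of independent sets whose union is $D$ itself, an elementary algebraic cycle. Your parenthetical (``reduce to finite degree conditions at each vertex'') does not rescue this, since each vertex condition is indeed eventually satisfied along the chain and that is precisely the problem. So I4 requires a different argument, and in fact Higgs' proof does not proceed by verifying I4 via Zorn. Second, the claim that I3 ``for circuits amounts to the circuit elimination property'' is too quick in the infinite setting: the circuit axioms for infinite matroids in \cite{Axm} involve an infinite elimination axiom together with a maximality condition, and the passage between these and I3 is nontrivial. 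Since the paper defers the full proof to Higgs, you are not being asked to supply it, but your sketch as written would not close.
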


Figure 1.4 is a picture of the Bean graph mentioned in Higgs' theorem and seen in ~\cite{InfGraph}.

\begin{figure}
\includegraphics{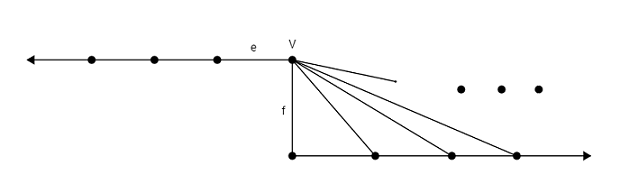}
\caption{Bean Graph}
\label{BeanGraph}
\end{figure}

Let us denote the Bean graph by $B$. To see why $M_{AC} (B)$ is not a matroid, consider the following. Let $H$ be the set of horizontal edges of $B$. $H$ is a maximal independent set in $M_{AC} (B)$. Let $H_f := (H-e)+f=(H \setminus \{ e \}) \cup \{ f \}$. This is another maximal independent set of $M_{AC} (B)$. Now consider the set $S$ which consists of all top horizontal edges of $B$ and all non horizontal edges of $B$. $S$ is a maximal independent set of $M_{AC} (B)$. $S-e = (S \setminus \{ e \} )$ is non maximal. You cannot add any edge of $H_f$ to $S-e$ to get another independent set of $M_{AC} (B)$. So our third independence axiom fails. Higgs' theorem shows that this is essentially the only example of a graph whose algebraic cycle system is not a matroid. Knowing when an independence system is a matroid is important to the study of problem $P1$. After all, problem $P1$ only concerns matroids and not other independence systems.

Another possible notion of an infinite cycle is known as a topological cycle and was studied in ~\cite{Top}. Before we define the notion of topological cycle, we will define what an end $\omega$ of an infinite graph is.

Let $R_1$ and $R_2$ be two rays of some graph $G=(V(G), E(G))$. We say that $R_1 \sim R_2$ if for every finite set $S$ of edges of $G$, $R_1$ and $R_2$ are in the same connected component of $(E(G) \setminus S, V(G))$. This induces an equivalence relation on the set of rays of $G$. An end $\omega$ is then defined to be an equivalence class of rays under $\sim$. For each end $\omega$ of $G$, we add a point at infinity corresponding to that end. This is known as the $\textit{End compactification}$ (or $\textit{Freudenthal compactification}$) of $G$. Given a graph $G$, together with an end boundary, a $\textit{topological cycle}$ is a homeomorphic image of the unit circle in the topological space consisting of the graph together with the boundary. This topological space will be denoted by $|G|$.

A double ray is a two way infinite sequence of edges $(...v_{-2}v_{-1}, v_{-1}v_0, v_0v_1, v_1v_2...)$ such that $v_i \neq v_j$ if $i$ and $j$ are two different integers. Removing one edge from a double ray will give you a pair of vertex-disjoint rays.

Let $D$ be a double ray and let $d$ be an edge of $D$. We say that $D$ belongs to an end $\omega$ if the two rays of $D \setminus \{ d \}$ are each an element of $\omega$. The $\textit{elementary topological cycles}$ of $G$ are elementary finite cycles and double rays belonging to some end of $G$.

The question about when the elementary topological cycles form the circuits of a matroid was explored in ~\cite{Top}. There, Carmesin proves that topological cycles of $|G|$ induce a matroid if and only if $G$ does not have a subdivision of the dominated ladder as shown in Figure 1.5.

\begin{figure}
\includegraphics{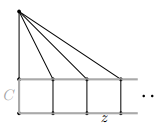}
\caption{Dominated Ladder}
\end{figure}

We will introduce other possible notions of cycles in Chapter 5.

One well known way to construct finite matroids on some finite ground set $E$ is to use the well known notion of representability. Let $V$ be a vector space and consider a function $\phi \colon E \rightarrow V$. Then we can define a matroid $M( \phi ) = (E, \mathcal{L})$ where
\begin{equation*}
\mathcal{L} := \{ S \subset E \colon \phi (S) \text{ is linearly independent in } V \}.
\end{equation*}

We call $M$ a representable matroid. If $V$ is a vector space over the field $k$, we say that $M$ is $k$-representable. Many examples of finite matroids turn out to be representable. The cycle and bond matroids of a finite graph are both representable. In fact, the cycle matroid of a finite graph $G$ is dual to the bond matroid of $G$. More generally, the class of finite representable matroids is closed under duality.

This notion of representability still works using algebraic linear independence if we allow $E$ to be infinite and $V$ to be infinite dimensional. However, the sums considered in algebraic linear independence are only defined if there are finitely many nonzero summands. Thus, a set $\phi (S)$ of vectors is linearly independent in the algebraic sense if and only if all of its finite subsets are independent. Thus, representability is an inherently finitary concept. Moreover, the dual of a representable infinite matroid need not be representable. To see this, consider the matroid $M ( \phi )= (\mathbb {N}, \mathcal{L} (M))$ where $\phi (1) \neq 0$ and $\phi (n) = \phi (1) $ for all $n \in \mathbb{N}$. Then $M( \phi )$ is a representable matroid by construction. Its independent sets are the subsets of size one. Thus, we have:

\begin{equation*}
[M ( \phi )]^* = \{ S \subset \mathbb{N} \colon \mathbb{N} \setminus S \neq \emptyset \}. 
\end{equation*}

Here, $\mathbb{N}$ is not independent but all of its finite subsets are independent. So $[M ( \phi )]^*$ is not finitary and thus not representable. 

To study non-finitary matroids, we will need to use the more general notion of thin representability that was introduced in ~\cite{InfGraph} and studied in ~\cite{ThinShum}. Our discussion of thin sums systems is based on the work in ~\cite{ThinShum}.

We denote the set of functions from $A$ to $k$ by $k^A$. Let $\mathcal{F}_E$ be a family of functions from $k$ to $A$ indexed by $E$.  We say that the family $\mathcal{F}_E$ is $\textit{thin}$ if for each $a \in A$, there are only finitely many $e \in E$ such that $f(e)(a) \neq 0$. Let $E$ be a set and consider a function $f \colon E \rightarrow k^A$ that gives us a family of functions $f(E)$. We say that a set $E' \subset E$ is thin if $f (E')$ is a thin family. A $\textit{thin dependence}$ of $E$ is a map $c \colon E \rightarrow k$ such that for each $a \in A$

\begin{equation*}
\sum_{e \in E} c(e) f(e) (a) = 0.
\end{equation*}

Even if there are only finitely many nonzero summands for each particular $a$, there could be infinitely many $e$ such that there is some $a \in A$ with $c(e)f(e)(a) \neq 0$. In that case, the sum $\sum_{e \in E} c(e)f(e)$ would not be well defined even though it is defined pointwise. Thus, this new notion of thin dependence is not the same as linear dependence. We also say that $c$ is a thin dependence of a subset $E'$ of $E$ if $c(e)$ is the zero outside of $E'$. The trivial thin dependence on $E$ has $c(e)=0$ for all $e \in E$. A subset $E' \subset E$ is called $\textit{thinly independent}$ if there is no nontrivial thin dependence of $E'$. Note that a thinly independent set $E'$ does not necessarily induce a thin family of functions. We say that $E'$ is $\textit{thinly dependent}$ if it is not thinly independent. We can define an independence system $M=(E, \mathcal{L}(M))$ with $\mathcal{L}(M)$ where $\mathcal{L}(M)$ consists of the thinly independent subsets of $E$. We say that $M$ is a $\textit{thin sums system}$. When $M$ is a matroid, we say $M$ is a $\textit{thin sums matroid}$.

The infinite independence systems we get from a graph's algebraic and topological cycles are thinly representable over any field in the following ways.

Let $G=(V(G),E(G))$ be a graph and consider the algebraic cycle system $M_{AC} (G)$. Put an arbitrary orientation on each edge to make $G$ a digraph. Let $k$ be any field. For each $e \in E(G)$, we can define a function $f(e) \in k^{V(G)}$ defined as follows. Set $f(e)(v) = 1$ if $e$ originates from $v$, $f(e)(v)=-1$ if $e$ terminates at $v$, and $f(e)(v)=0$ if $e$ does not touch $v$. Then we get an independence system $M_{thinAC}(G)=(E(G), \mathcal{L} (M))$ using the thinly independent subsets of $E(G)$.

\begin{theorem}
(~\cite{ThinShum} Proposition 2.9)
$M_{thinAC}(G) = M_{AC}(G)$.
\end{theorem}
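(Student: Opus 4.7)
The plan is to prove both inclusions of independence systems, which reduces to showing that a subset $S\subseteq E(G)$ contains an algebraic cycle if and only if $S$ carries a nontrivial thin dependence with respect to the family $\{f(e)\}_{e\in E(G)}$.

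For the direction that $S$ algebraically dependent implies $S$ thinly dependent, suppose $C\subseteq S$ is an algebraic cycle, so every vertex of the subgraph $(V(G),C)$ has finite even degree. A transfinite Veblen-style decomposition writes $C$ as the edge-disjoint union of finite cycles and double rays. Assign each such piece a consistent direction of traversal and define $c\colon E(G)\to k$ by setting $c(e)=+1$ if this direction agrees with $G$'s chosen orientation of $e$, $c(e)=-1$ otherwise, and $c(e)=0$ for $e\notin C$. At each vertex $v$, every piece through $v$ has exactly one edge leaving $v$ (contributing $+1$ to $\sum_{e}c(e)f(e)(v)$) and one edge entering $v$ (contributing $-1$), so the sum cancels piecewise. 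The finite degree of $v$ in $C$ makes the supporting family thin, and since $C$ is nonempty, $c$ is a nontrivial thin dependence, so $S$ is thinly dependent.

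For the converse, let $c$ be a nontrivial thin dependence of $S$ with nonempty support $S':=\{e\in S:c(e)\neq 0\}$. For each vertex $v$ incident to some edge of $S'$, the identity $\sum_{e\in S'}c(e)f(e)(v)=0$ is a finite sum whose nonzero terms are $\pm c(e)$, one per edge $e\in S'$ incident to $v$. A single such nonzero term cannot vanish, so every vertex touched by $S'$ has at least two incident edges in $S'$. A walk-extension argument then produces either a finite cycle or a double ray inside $S'$: starting at any edge $e_0\in S'$ and extending greedily at each newly reached vertex by choosing an incident edge of $S'$ distinct from the arrival edge, a first vertex revisit yields a finite cycle, and extending instead in both directions indefinitely yields a double ray. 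Each outcome is a nonempty edge set on which every vertex has even degree, hence an algebraic cycle contained in $S$.

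The main obstacle is the decomposition step in the forward direction: naive greedy extraction of cycles and double rays from an infinite even-degree subgraph can strand edges at limit stages. I would resolve this via Zorn's lemma applied to families of edge-disjoint collections of finite cycles and double rays in $C$, ordered by inclusion. Maximality together with the minimum-degree-two condition at any remaining vertex forces exhaustion of $C$, after which orientation and vertex-local cancellation follow mechanically.
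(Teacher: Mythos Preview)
Your proof is correct, but the forward direction takes a longer route than the paper's. The paper observes that if $D$ is dependent in $M_{AC}(G)$ then $D$ already contains an \emph{elementary} algebraic cycle, i.e.\ a single finite cycle or a single double ray $D'$; it then orients just that one piece and writes down the obvious $\pm 1$ thin dependence supported on $D'$. No Veblen-type decomposition and no appeal to Zorn's lemma are needed, because one circuit suffices to witness thin dependence. Your approach instead exhibits a thin dependence supported on the \emph{entire} algebraic cycle $C$, which forces you to decompose all of $C$ into cycles and double rays. This works, and it buys you the slightly stronger (but here unneeded) statement that every algebraic cycle is itself the support of a thin dependence, at the cost of the transfinite decomposition argument you flag as the main obstacle.

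For the converse direction your argument is essentially the paper's, stated a bit more carefully: both note that each vertex meeting the support of a nontrivial thin dependence must be incident with at least two support edges, and both extract a finite cycle or double ray from that minimum-degree-two condition. Your walk-extension description makes explicit what the paper leaves implicit.
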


\begin{proof}
We want to show that $D$ is dependent in $M_{thinAC}(G)$ if and only if it is dependent in $M_{AC}(G)$. Suppose $D$ is dependent in $M_{AC}$. Then $D$ contains an elementary algebraic cycle. That is, $D$ contains an elementary finite cycle or double ray. Let $D' \subset D$ be the edge set of this cycle or double ray. Give a direction to $D'$. For any edge $e \in D$, define $c(e)$ to be $1$ if $e$ is an edge of $D'$ and the direction given to $e$ by the digraph $G$ is the same as the direction given to $e$ by the direction of $D'$ , $-1$ if $e$ is an edge of $D'$ and the two directions given to $e$ are opposites, and $0$ if $e \notin D'$. For every vertex $v \in V(G)$, $\sum_{d \in D'}c(e)f(e)(v)=0$, so $c$ is a thin dependence of $D$. Conversely, suppose that $D$ is dependent in $M_{thinAC}(G)$. Whenever a vertex $v$ is an end of an edge in $D$, it has to be the end of at least two edges in $D$. For each vertex $v$ that touches some edge in $D$, pick two edges in $D$ that touch $v$ to get a subset $D' \subset D$. $D'$ is an algebraic cycle.
\end{proof}

In ~\cite{ThinShum}, the authors show that matroids thinly representable by a thin family are dual to a representable matroid that is finitary. Since we are studying nearly finitary matroids, it is natural to ask if there is a dual notion of nearly thin families for thin sum matroids. We define a family of functions $\mathcal{F}_E$ from $k$ to $A$ to be $n$-$\textit{nearly thin}$ if there are at most $n$ elements $a \in A$ such that there are infinitely many $e \in E$ such that $f(e)(a) \neq 0$. In Chapter 6, we will introduce examples of matroids from $n$-nearly thin families that are dual to $n$-nearly finitary matroids. We conjecture that matroids from $n$-nearly thin families are dual to $n$-nearly finitary matroids. We also conjecture if we have a thin sums matroid that is not thinly representable by an $n$-nearly thin matroid, then it is not dual to a nearly finitary matroid.

Finally, we introduce a new notion of representability related to thin sums and study it in the end of Chapter 6.

   \chapter[%
      Short Title of 2nd Ch.
   ]{%
     Summary of Main Results
   }%
   \label{ch:2ndChapterLabel}
   In Chapter 3, we study nearly finitary matroids in various ways. We first introduce a notion of finitarization spectrum and figure out a few results pertaining to this notion. Later on, we find a condition that guarantees a nearly finitary matroid is $k$-nearly finitary when satisfied. We were unable to find nearly finitary matroids that do not satisfy this condition. We study this condition in detail and prove some results related to this condition. In particular,  $(M^\mathrm{fin*} \vee M)^*$ from below is sometimes the same as $\hat{M}$ from below. Then, we assume the existence of a nearly finitary matroid that is not $k$-nearly finitary. From this assumption, we derive the existence of several families of nearly finitary matroids that are not $k$-nearly finitary. Finally, we show that the union of a finite rank matroid with any other matroid gives us a matroid.

\begin{customthm}{3.2.1}
There exists nearly finitary matroids with finitarization spectrum of arbitrarily large finite size.
\end{customthm}

\begin{customthm}{3.4.1}
Let $M=(E,\mathcal{L})$ and $M^{\mathrm{fin}}=(E,\mathcal{L}^{\mathrm{\mathrm{fin}}})$ be a matroid and its finitarization respectively. Define $\hat{M}:=(E,\mathcal{K})$ where $S\in\mathcal{K}$ if there exists a base $F$ in $M^{\mathrm{fin}}$ and a base $B$ in $M$ such that $B\subset F$ and $S\subset F\setminus B$. 

If $M$ is a nearly finitary matroid such that $\hat{M}$ is a matroid, then $M$ is $k$-nearly finitary.
\end{customthm}

\begin{customthm}{3.4.3}
Suppose $M$ is a nearly finitary and nearly cofinitary matroid. Then $(M^\mathrm{fin*} \vee M)^*$ is a matroid.
\end{customthm}

We propose the following independence system. Let $M$ be a nearly finitary matroid and define the following
\begin{equation*}
S(M):= \{ F^* \cup B \colon F^* \text{ is a base of } M^\mathrm{fin*}, B \text{ is a base of } M, \text{ and } F^* \cap B = \emptyset \}. 
\end{equation*}

We then define $S(M)_\mathrm{min}$ to be the minimal elements of $S(M)$ with respect to set inclusion.
\begin{customthm}{3.4.4}
$S(M)_\mathrm{min}$ is non-empty if $M$ is a $k$-nearly finitary matroid.
\end{customthm}

The rank of a matroid $M$ is the cardinality of a base in $M$. For a matroid $M$ with rank at least $k$, a related matroid $M[k]$ is defined in the beginning of Section 3.3. This definition is originally from ~\cite{Union}.
\begin{customthm}{3.4.5}
Suppose that there exists some matroid $M=(E(M),\mathcal{L} (M))$ that is nearly finitary but not $n$-nearly finitary. Then $M[k]$ is also nearly finitary but not $n$-nearly finitary.
\end{customthm}

\begin{customthm}{3.4.6}
Let $N=(E(N), \mathcal{L}(N))$ be a nearly finitary matroid with $E(N)$ disjoint from $E(M)$. By the nearly finitary matroid union theorem in ~\cite{Union}, the matroid union $M \vee N$ is a nearly finitary matroid. Moreover, it is not $n$-nearly finitary.
\end{customthm}

\begin{customthm}{3.5.1}
Suppose that $F$ is a finite rank matroid and $M$ is any matroid. Then the union $F \vee M$ is a matroid.
\end{customthm}

In Chapter 4, we introduce a notion of near-finitarization. More importantly, we answer a certain generalization of problem $P1$ negatively.

\begin{customthm}{4.2.1}
There exists a nearly finitary independence system that is not $k$-nearly finitary.
\end{customthm}

In Chapter 5, we study $\Psi$-independence systems from ~\cite{Games}. We define $P (\Psi)$-matroids which generalize $\Psi$-matroids and prove the following theorem.
\begin{customthm}{5.2.1}
Suppose $M=(E, \mathcal{L})$ is a nearly $P( \Psi )$-matroid on a graph $G$ with finitely many disjoint rays. Then $M$ is $k$-nearly finitary.
\end{customthm}

We also study $\Psi$-finite tree systems from ~\cite{Games}. Under the axiom of determinacy, all $\Psi$-finite tree systems are matroids.
\begin{customthm}{5.3.1}
It is impossible to pick an axiom system such that all $\Psi$-finite tree systems are matroids and all classical finitary matroids are matroids.
\end{customthm}

In Chapter 6, we introduce the notion of a nearly thin family and speculate that it is in some sense dual to the notion of nearly finitary. We introduce an example showing why we believe this. Finally, we introduce a new kind of independence system inspired by thin sums representability and we give a condition for when an independent set in this kind of system is maximal.

\begin{customthm}{6.2.1}
Let $M_f$ be a topological independence system. An independent set $S$ of $M_f$ is maximal if and only if for all $e\in E$, 
\begin{equation*}
f(e)\in\hat{f}\left(C_{S}(f)\right).
\end{equation*}
\end{customthm}

   \chapter[%
      Short Title of 3rd Ch.
   ]{%
     Finitarization Spectrum
   }%
   \label{ch:3rdChapterLabel}
   \begin{section}{Definitions and Motivation}

Earlier, we introduced a notion of finitarization spectrum for matroids. Recall that
\begin{equation*}
 Spec(M) := \{|F \setminus B|: F \supset B, F \text{ is a base in } M^\mathrm{fin}, \text{and } B \text{ is a base in } M\}.
\end{equation*}
For our considerations, we consider all infinite cardinalities equal. 

We can also define a dual notion of spectrum with $coSpec(M) := Spec(M^*)$.

A nearly finitary matroid is $k$-nearly finitary if and only if its spectrum has finite size. This leads us to consider what possible sizes a matroid spectrum can have. We construct nearly finitary matroids with spectrums of any finite cardinality. We also construct a matroid with an infinitely large spectrum that is not nearly finitary. We will start by reviewing a matroid that inspired our definition of spectrum since its spectrum is not a singleton set. Consider the algebraic cycle matroid $M=(E,\mathcal{L})$ of the one way infinite ladder graph $G$ that will be drawn below in Figure 3.1.

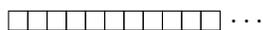
\begin{figure}
\begin{tikzpicture}[
      start chain=1 going right, node distance=-0.15mm
    ]
    \foreach \x in {1,2,...,11} {
        \x, \node [draw,on chain=1] {};
    } 
    \node [name=r,on chain=1] {\ldots}; 
\end{tikzpicture}
\caption{One Way Infinite Ladder}
\label{Ladder}
\end{figure}

Recall that a $\textit{spanning tree}$ of a graph is a connected set of edges that touch every vertex and contains no finite cycle. For any infinite graph $G=(V(G),E(G))$, we can construct a topological space $\hat{G}$ associated to $G$ where $V(G)$ is a totally disconnected subspace of $\hat{G}$. For each edge $v_iv_j \in E(G)$, we have a homeomorphism $\phi_{ij}$ from the interval $[0,1]$ to $\hat{G}$ given by $\phi_{ij} (0) = v_i$ and $\phi_{ij} (1) = v_j$. Any edge set $S$ of $G$ corresponds to the topological subspace $\hat{S}$ of $\hat{G}$ that includes the vertices of $S$ and points in between $v_i$ and $v_j$ if $v_iv_j \in S$. To help us define an algebraic spanning tree, we consider the one point compactification of $\hat{G}$ by adding a point $\omega_G$ at infinity if necessary. We denote this space by $\hat{G}_{\mathrm{algebraic}}$. Each edge set $S$ of $G$ corresponds to some topological subspace $\hat{S}_{\mathrm{algebraic}}$ of $\hat{G}_{\mathrm{algebraic}}$. If $S$ contains no ray, then $\hat{S}_{\mathrm{algebraic}} = \hat{S}$. If $S$ contains any ray, then $\hat{S}_{\mathrm{algebraic}} = \hat{S} \cup \{ \omega_G \}$. In other words, any ray in $G$ touches $\omega_G$. We define a set of edges $S$ to be algebraically connected if the topological space $\hat{S}_{\mathrm{algebraic}}$ is connected. Thus, any set of two vertex-disjoint rays would be algebraically connected in this graph. We define an $\textit{algebraic spanning tree}$ to be an algebraically connected set of edges that touch every vertex and contains no algebraic cycle. Bases of this matroid are algebraic spanning trees of $G$. The finitarization of $M$ is the finite cycle matroid of $G$ whose circuits are elementary finite cycles of $G$. Bases of $M^\mathrm{fin}$ are ordinary spanning trees.

The following theorem is inspired by an exercise left to readers.

\begin{theorem}
(See ~\cite{Union} page 2)
Let $B$ be a base in $M$. Suppose $F_1$ and $F_2$ are bases in $M^\mathrm{fin}$ that contain $B$. Then $|F_2 \setminus B|=|F_1 \setminus B|$.
\end{theorem}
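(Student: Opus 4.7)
The plan is to reduce the claim to the chapter's earlier fact that any two bases of a single matroid have the same cardinality (with all infinite cardinalities identified). The mediating object will be the contraction $N := M^{\mathrm{fin}}/B$.

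First I would note that $B$ is independent in $M^{\mathrm{fin}}$: since $B$ is a base of $M$, every finite subset of $B$ is independent in $M$, which is precisely the condition for $B \in \mathcal{L}(M^{\mathrm{fin}})$. Next, form $N := M^{\mathrm{fin}}/B$, which is a matroid on ground set $E \setminus B$ by the fact (cited earlier from \cite{Axm}) that contractions of matroids are matroids. Unwinding the paper's definition of contraction, $I \subset E \setminus B$ is independent in $N$ exactly when $I \cup J \in \mathcal{L}(M^{\mathrm{fin}})$ for every independent $J \subset B$; because $M^{\mathrm{fin}}$ is finitary and $B$ is independent, every subset of $B$ is independent, so this condition simplifies to $I \cup B \in \mathcal{L}(M^{\mathrm{fin}})$.

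From this characterization, I would argue that the bases of $N$ are precisely the sets $F \setminus B$ such that $F$ is a base of $M^{\mathrm{fin}}$ containing $B$: maximality of $F \setminus B$ in $N$ forces $F$ to be maximal in $\mathcal{L}(M^{\mathrm{fin}})$, since any strictly larger independent set in $M^{\mathrm{fin}}$ would, by containing $B$, provide a strictly larger independent set in $N$. In particular, $F_1 \setminus B$ and $F_2 \setminus B$ are both bases of $N$. Invoking the earlier theorem that any two bases of a matroid have the same cardinality then yields $|F_1 \setminus B| = |F_2 \setminus B|$.

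The main obstacle is only bookkeeping: one must be careful translating between the contraction definition used in the paper and the convenient working description $I \in \mathcal{L}(N) \iff I \cup B \in \mathcal{L}(M^{\mathrm{fin}})$, since $B$ may be infinite. This step is where finitarity of $M^{\mathrm{fin}}$ is essential, as it ensures all subsets of $B$ are independent, which collapses the quantifier over independent $J \subset B$ to the single condition with $J = B$. Once that identification is made, the proof is immediate from matroid machinery already developed in the chapter.
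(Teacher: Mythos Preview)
Your argument is correct and takes a genuinely different route from the paper's. The paper argues by contradiction directly in $M^{\mathrm{fin}}$: assuming $|F_2 \setminus B| > |F_1 \setminus B|$ (so the latter is finite), it repeatedly applies the base exchange axiom B2 to swap elements of $F_2 \setminus F_1$ for elements of $F_1 \setminus F_2$, each time obtaining a new base of $M^{\mathrm{fin}}$ still containing $B$ and with the same excess over $B$, while strictly shrinking $|F_1 \setminus F_j|$. After finitely many steps this produces a base properly containing $F_1$, a contradiction. Your approach instead packages the situation into the contraction $N = M^{\mathrm{fin}}/B$ and invokes the already established fact that all bases of a matroid have the same cardinality. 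This is more conceptual and shorter; the paper's argument is more elementary in that it avoids the contraction machinery and reproves the equicardinality in situ.

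One small point: your remark that ``finitarity of $M^{\mathrm{fin}}$ is essential'' to see that all subsets of $B$ are independent is not quite the right attribution. What you need is simply that $B$ itself lies in $\mathcal{L}(M^{\mathrm{fin}})$, which follows immediately from $B \in \mathcal{L}(M) \subset \mathcal{L}(M^{\mathrm{fin}})$; once $B$ is independent, axiom I2 gives independence of every subset, and then the quantifier over $J \subset B$ collapses to $J = B$ by I2 again (since $I \cup J \subset I \cup B$). The argument does not actually use that $M^{\mathrm{fin}}$ is finitary at this step, though of course the proof is unaffected.
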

\begin{proof}
We will use proof by contradiction. Without loss of generality, assume that $|F_2 \setminus B| > |F_1 \setminus B|$. Since we consider all infinite cardinalities equal, this means that $|F_1 \setminus B|$ is finite. $M^{\mathrm{fin}}$ is a matroid and thus satisfies the base exchange axiom. Since $F_2$ has more elements outside $B$ than $F_1$, there exists $x \in F_2 \setminus F_1$. Since $B \subset F_1$, $x \notin B$. By basis exchange, there is some $y \in F_1 \setminus F_2$ such that $F_3 := (F_2 -x) + y $ is a base in $M^\mathrm{fin}$. $F_3$ still contains $B$ since $x \notin B$. Moreover, 

\begin{equation*}
|F_3 \setminus B| = |F_2 \setminus B| > |F_1 \setminus B|.
\end{equation*}

So we can continue inductively using base exchange to get new bases of $M^\mathrm{fin}$. Also notice that

\begin{equation*}
|F_1 \setminus F_3| < |F_1 \setminus F_2| \leq |F_1 \setminus B| < \infty.
\end{equation*}

Since $|F_1 \setminus B|$ is finite, we can continue this process and eventually have some base $F \supset F_1$ of $M^{fin}$ with $B \subset F$ and

\begin{equation*}
|F \setminus B| > |F_1 \setminus B|.
\end{equation*}

So $F_1$ must be a proper subset of $F$ but this contradicts the maximality of $F_1$. We conclude that $|F_2 \setminus B|=|F_1 \setminus B|$ and we see that any base $B$ in $M$ can contribute at most one element to the spectrum of $M$.
\end{proof}

We can also fix a base $F$ of $M^\mathrm{fin}$ and vary bases of $M$ to get a similar theorem.

\begin{theorem}
Suppose that $F$ is a base in $M^\mathrm{fin}$ and suppose that $B_1$ and $B_2$ are bases in $M$ contained in $F$. Then $|F \setminus B_2| = |F \setminus B_1|$.
\end{theorem}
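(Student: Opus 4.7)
The plan is to mirror the proof of Theorem 3.1.1, swapping the roles of $M$ and $M^\mathrm{fin}$ and appealing to the base exchange axiom B2 in $M$ instead of in $M^\mathrm{fin}$. Since $B_1, B_2 \subset F$, the set $F$ decomposes disjointly as $(B_1 \cap B_2) \sqcup (B_1 \setminus B_2) \sqcup (B_2 \setminus B_1) \sqcup (F \setminus (B_1 \cup B_2))$, from which one reads off $F \setminus B_1 = (B_2 \setminus B_1) \sqcup (F \setminus (B_1 \cup B_2))$ and $F \setminus B_2 = (B_1 \setminus B_2) \sqcup (F \setminus (B_1 \cup B_2))$. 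Under the convention that all infinite cardinalities are identified, the desired equality $|F \setminus B_1| = |F \setminus B_2|$ reduces to the claim $|B_1 \setminus B_2| = |B_2 \setminus B_1|$.

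I would argue this by contradiction, in direct analogy with Theorem 3.1.1. Suppose without loss of generality that $|F \setminus B_1| > |F \setminus B_2|$; then $|F \setminus B_2|$ must be finite, and hence $|B_1 \setminus B_2| \le |F \setminus B_2|$ is finite as well. Pick any $x \in B_1 \setminus B_2$ and apply B2 to the bases $B_1$ and $B_2$ of $M$ to produce $y \in B_2 \setminus B_1$ such that $B_1' := (B_1 - x) + y$ is a base of $M$. Because $y \in B_2 \subset F$ and $B_1 - x \subset F$, the new base still lies inside $F$. Moreover the counts $|B_1' \setminus B_2| = |B_1 \setminus B_2| - 1$ and $|B_2 \setminus B_1'| = |B_2 \setminus B_1| - 1$ each drop by exactly one.

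Iterating this swap $n := |B_1 \setminus B_2|$ times produces a base $B_1^{(n)}$ of $M$ with $B_1^{(n)} \subset F$ and $B_1^{(n)} \setminus B_2 = \emptyset$. Because $B_1^{(n)}$ is maximal in $\mathcal{L}(M)$ and $B_1^{(n)} \subset B_2 \in \mathcal{L}(M)$, maximality forces $B_1^{(n)} = B_2$. The parallel counter then yields $|B_2 \setminus B_1| = n = |B_1 \setminus B_2|$, which via the decomposition above gives $|F \setminus B_1| = |F \setminus B_2|$, contradicting the strict inequality.

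The main obstacle is simply verifying that the iterated exchange is well-posed and terminates correctly: one must check that each intermediate base $B_1^{(i)}$ remains inside $F$ (which is automatic since the element introduced at each step comes from $B_2 \subset F$) and that $B_1^{(n)} \setminus B_2 = \emptyset$ upgrades from containment to equality $B_1^{(n)} = B_2$ (which follows from the maximality of bases). Once these bookkeeping points are in place, the finiteness of $|B_1 \setminus B_2|$ ensures the procedure halts after exactly $n$ steps and delivers the matching cardinality.
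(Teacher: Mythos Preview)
Your proof is correct and follows essentially the same strategy as the paper: iterate the base exchange axiom B2 in $M$ finitely many times, using that one of the relevant differences is finite, and derive a contradiction from maximality of bases. The only cosmetic difference is that the paper exchanges starting from $B_2$ and drives $|B_1 \setminus B_i|$ down to produce a base properly containing $B_1$, whereas you exchange starting from $B_1$ and drive $|B_1^{(i)} \setminus B_2|$ down to force $B_1^{(n)} = B_2$; your preliminary decomposition reducing the question to $|B_1 \setminus B_2| = |B_2 \setminus B_1|$ is a nice bit of bookkeeping but not a genuinely different idea.
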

\begin{proof}
Without loss of generality, assume that $|F \setminus B_1| > |F \setminus B_2|$. Like before, this means that $|F \setminus B_2|$ is finite. Since $B_2$ has more elements inside $F$ than $B_1$, there exists  $x \in B_2 \setminus B_1$. So there is some $y \in B_1 \setminus B_2$ such that $B_3 : = (B_2 -x)+y$  is a base in $M$. $B_3$ is still contained in $F$ since $y \in F$. Moreover,

\begin{equation*}
|F \setminus B_3| = |F \setminus B_2| < |F \setminus B_1|.
\end{equation*}

So we can continue inductively using base exchange to get new bases of $M$. Also notice that

\begin{equation*}
|B_1 \setminus B_3| < |B_1 \setminus B_2| \leq |F \setminus B_2| < \infty.
\end{equation*}

So we can continue this process and eventually have some base $B \supset B_1$ of $M$ with $B \subset F$ and
\begin{equation*}
|F \setminus B_1| > |F \setminus B|.
\end{equation*}

So $B_1$ must be a proper subset of $B$ but this contradicts the maximality of $B_1$. So $|F \setminus B_1| = |F \setminus B_2|$ and we see that any base $F$ of $M^\mathrm{fin}$ can contribute at most one element to the spectrum of $M$. This also shows that our definitions of nearly finitary matroid and $k$-nearly finitary matroid coincide with the definitions in ~\cite{Union}.
\end{proof}
\end{section}

\begin{section}{Ladders}
Let $B$ be the set of all top edges and all middle edges of the one way infinite ladder. Then $B$ contains no algebraic cycle and is an algebraic spanning tree. $B$ is also a spanning tree. Thus, $B$ is a base in both $M$ and its finitarization. This shows that $0 \in Spec(M)$.
Now let $A$ be the set of all top edges and all bottom edges. Because of the point at infinity, $A$ is algebraically connected. It contains no algebraic cycle and is an algebraic spanning tree. However, $A$ is not connected in the usual sense so it is not a spanning tree. Thus, $A$ is not a base of $M^\mathrm{fin}$. Adding any middle edge to $A$ will make it a spanning tree and a base in $M^\mathrm{fin}$. This shows that $Spec(M)$ contains $\{0,1\}$. Suppose there is some algebraic spanning tree $T$ that is not a spanning tree. Since $T$ is a base in $M$, adding any edge to $T$ will make the new set contain an algebraic cycle. Suppose there is a spanning tree $S$ that contains $T$ and at least two additional edges $x$ and $y$.  $T \cup \{x\}$ will contain an infinite algebraic cycle (i.e. a double ray) since it cannot contain any finite cycle. $T \cup \{y\}$ will contain a different double ray. $S$ will contain both double rays. In this particular graph $G$, the union of any two double rays contains a finite circuit. Thus, $Spec(M)$ cannot contain $2$ or any other larger number. We conclude that $Spec(M)=\{0,1\}$. We use this basic example to prove the following.

\begin{theorem}
There exists nearly finitary matroids with finitarization spectrum of arbitrarily large finite size.
\end{theorem}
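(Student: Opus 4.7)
The plan is to produce, for each positive integer $n$, a nearly finitary matroid whose spectrum has size $n+1$ by taking the matroid union of $n$ pairwise disjoint copies of the algebraic cycle matroid $M$ of the one way infinite ladder, which the preceding section has shown satisfies $Spec(M) = \{0,1\}$. On disjoint ground sets, matroid union collapses to a direct-sum construction: if $E(M_1) \cap E(M_2) = \emptyset$, then a set $X \subset E(M_1) \cup E(M_2)$ lies in $\mathcal{L}(M_1 \vee M_2)$ if and only if $X \cap E(M_i) \in \mathcal{L}(M_i)$ for $i = 1, 2$, because the decomposition $X = (X \cap E(M_1)) \cup (X \cap E(M_2))$ is forced. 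This is already known to be a matroid by the matroid union theorem from \cite{Union}, which I would invoke as a black box.

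Let $M_1, \ldots, M_n$ be disjoint copies of $M$ on ground sets $E_1, \ldots, E_n$, and set $N := M_1 \vee \cdots \vee M_n$ on $E := E_1 \sqcup \cdots \sqcup E_n$. The next step is to verify that finitarization commutes with disjoint union, i.e.\ $N^{\mathrm{fin}} = M_1^{\mathrm{fin}} \vee \cdots \vee M_n^{\mathrm{fin}}$. This is immediate from the definitions: every finite $T \subset S$ splits as $\bigcup_i (T \cap E_i)$ with each piece finite, so the statement ``every finite subset of $S$ is $N$-independent'' is equivalent to ``for each $i$, every finite subset of $S \cap E_i$ is $M_i$-independent.'' Consequently, bases of $N$ have the form $B_1 \cup \cdots \cup B_n$ with each $B_i$ a base of $M_i$, bases of $N^{\mathrm{fin}}$ have the form $F_1 \cup \cdots \cup F_n$ with each $F_i$ a base of $M_i^{\mathrm{fin}}$, and the containment $B \subset F$ holds if and only if $B_i \subset F_i$ for each $i$.

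From this it follows that $|F \setminus B| = \sum_{i=1}^n |F_i \setminus B_i|$, and each summand lies in $Spec(M) = \{0, 1\}$. Conversely, for each choice $(\epsilon_1, \ldots, \epsilon_n) \in \{0,1\}^n$, the single-copy computation already exhibits for each $i$ a pair $B_i \subset F_i$ with $|F_i \setminus B_i| = \epsilon_i$ (top-plus-middle giving $0$, and top-plus-bottom plus one extra middle rung giving $1$); juxtaposing these in each coordinate yields a pair $B \subset F$ in $N$ realizing total deficit $\sum_i \epsilon_i$. Thus $Spec(N) = \{0, 1, \ldots, n\}$ has cardinality $n+1$, and since $|F \setminus B| \le n < \infty$ uniformly, $N$ is $n$-nearly finitary and in particular nearly finitary. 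The only step requiring actual care is the commutation of finitarization with disjoint union; this is bookkeeping once one notices that disjointness forces every subset of $E$, and every finite subset, to decompose canonically by coordinate, and I do not foresee a deeper obstacle.
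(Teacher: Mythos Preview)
Your proposal is correct and essentially the same as the paper's approach: the paper takes the algebraic cycle matroid $M_n$ of the disjoint union of $n$ one-way infinite ladders and argues directly on the graph that $Spec(M_n)=\{0,1,\ldots,n\}$, which is exactly your direct sum of $n$ copies of $M$ since the algebraic cycle matroid of a disconnected graph is the direct sum of the algebraic cycle matroids of its components. The paper even remarks immediately after its proof that the same examples can be obtained via the Nearly Finitary Matroid Union Theorem applied to the single ladder, which is precisely your route; your version is a bit more abstract (explicitly verifying that finitarization commutes with disjoint union), while the paper argues concretely with algebraic spanning forests, but there is no substantive difference.
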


\begin{proof}
Consider a graph $G_n$ of $n$ parallel one way infinite ladders as shown above in Figure 3.2 for $n=3$. 

\begin{figure}
\begin{tikzpicture}[
      start chain=1 going right, node distance=-0.15mm
    ]
    \foreach \x in {1,2,...,11} {
        \x, \node [draw,on chain=1] {};
    } 
    \node [name=r,on chain=1] {\ldots}; 
\end{tikzpicture}

\begin{tikzpicture}[
      start chain=1 going right, node distance=-0.15mm
    ]
    \foreach \x in {1,2,...,11} {
        \x, \node [draw,on chain=1] {};
    } 
    \node [name=r,on chain=1] {\ldots}; 
\end{tikzpicture}

\begin{tikzpicture}[
      start chain=1 going right, node distance=-0.15mm
    ]
    \foreach \x in {1,2,...,11} {
        \x, \node [draw,on chain=1] {};
    } 
    \node [name=r,on chain=1] {\ldots}; 
\end{tikzpicture}
\caption{3 One Way Infinite Ladders}
\label{Three}
\end{figure}
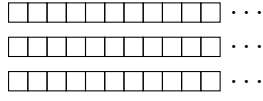

Let $M_n$ be the algebraic cycle matroid of $G_n$. Bases of $M_n$ are algebraic spanning forests of $G_n$. Let $B_i$ be the set of all top edges and middle edges of the $i$-th connected component of $G_n$. Index the connected components of $G_n$ by $C_i$ where $1 \leq i \leq n$. Let $A_i$ be the set of all top edges and bottom edges of the $i$-th connected component of $G_n$. Let $0 \leq k \leq n$. Let $A^{k} =\bigcup_{i=1}^{k}A_{i}$ and $B^{k} =\bigcup_{i=k+1}^{n}B_{i}$. $A^{0}$ and $B^{n}$ are defined to be empty. Then $A^{k} \cup B^{k}$  is a base in $M_n$. Add an edge to each connected component of $A^{k}$ to get a new set $A^{k}_2$. Connected components of $B^{k}$ do not need any added edges since any such added edge will create a finite cycle. So $A^{k}_2 \cup B^{k}$ is a base in the finitarization of $M_n$. This shows that $k$ is in the spectrum for $0 \leq k \leq n$. So $Spec(M)$ contains $\{0,1,2,...,n\}$. For each connected component of $G_n$, an algebraic spanning tree has at most one edge missing from a spanning tree. Thus, any base of $M_n$ can be extended to a base in the finitarization by adding at most $n$ elements. So $Spec(M_n)=\{0,1,2,...,n\}$.
\end{proof}

We can also construct these examples involving multiple ladders using the Nearly Finitary Matroid Union Theorem in ~\cite{Union} and the example involving a single ladder from Section 3.1.

Given a matroid $M$, a natural question to consider is the following. Suppose you have two bases $S_1$ and $S_2$  of $M$ that are respectively contained in bases $F_1$ and $F_2$ of $M^\mathrm{fin}$ with $|F_1 \setminus S_1|=|F_2 \setminus S_2|< \infty$. Must there exist some base $F$ of $M^\mathrm{fin}$ that contains both $S_1$ and $S_2$? The examples that we have considered in this section answer this question negatively.  Consider $M_2$. Consider $S_1=A_1 \cup B_2$  and $S_2 = A_2 \cup B_1$. $S_1$ and $S_2$ are both bases in $M_2$ contained respectively in bases $F_1$ and $F_2$ of $M_2^\mathrm{fin}$ with $|F_1 \setminus S_1| = |F_2 \setminus S_2|=1$. However, no base $F$ of $M_{2}^\mathrm{fin}$ contains both $S_1$ and $S_2$.

Conversely, we can also show that there are two bases $F_1$ and $F_2$ of $M_2^\mathrm{fin}$ that respectively contains $S_1$ and $S_2$ with $|F_1 \setminus S_1|=|F_2 \setminus S_2|< \infty$ such that no base $S$ of $M_2$ is contained in both $F_1$ and $F_2$. Let $F_1 = A_1 \cup B_2 + e_1$ and $F_2 = A_2 \cup B_1 + e_2$ where $e_i$ is the left-most middle edge of the $i$-th connected component of $G_2$. Then $F_1$ and $F_2$ are bases in $M_{2}^\mathrm{fin}$ that show that $1 \in Spec( M_2 )$. However, $F_1 \cap F_2$  cannot contain any algebraic spanning forest of $G_2$. Thus, no base of $M_2$ is contained in both $F_1$ and $F_2$.

Let us consider a graph $G$ of countably infinite parallel one way infinite ladders. Since $G$ has no subdivision of the Bean graph, the algebraic cycles of $G$ induce a matroid. Let $M_{\infty}$ be the algebraic cycle matroid of $G$.  Like when we studied graphs consisting of finitely many ladders, let $B_i$ be the set of all top edges and middle edges of the $i$-th connected component of $G$ and let $A_i$ be the set of all top edges and bottom edges of the $i$-th connected component of $G$. Define $A^0$ to be empty. Let $k \geq 0$. Let $A^{k} =\bigcup_{i=1}^{k}A_{i}$ and $B^{k} =\bigcup_{i=k+1}^{\infty}B_{i}$. Then like when we had finitely many ladders, $A^{k} \cup B^{k}$ gives us a base of $M_{\infty}$ where you need to add $k$ elements to get a base in the finitarization. This shows that $k$ is in the spectrum for any natural number $k$. Define $A^{\infty}=\bigcup_{i=1}^{\infty}A_i$. Then $A^{\infty}$  will be a base where infinitely many edges must be added to get a base in the finitarization. This shows us that $\infty$ is in the spectrum. Thus, $Spec(M_{\infty}) = \mathbb{N} \cup \{\infty\}$. Although the spectrum of $M_{\infty}$ is infinitely large, this matroid is not nearly finitary.

\end{section}

\begin{section}{More on Spectrum}

It seems that all examples of nearly finitary spectrums we have considered so far are either singleton sets or have a set of consecutive integers starting from $0$. This motivates us to construct matroids with different kinds of spectrums.

Recall that the rank of a matroid $M$ is the cardinality of a base $B$ in $M$ with infinite cardinalities considered equal. Let $\mathcal{M}$ be the class of matroids and let $k$ be a natural number. Let $\mathcal{M}_{\geq k}$ be the class of matroids of rank at least $k$. For $k \in \mathbb{N}$, the authors of ~\cite{Union} define a map $[k]$ from $\mathcal{M}_{\geq k}$ to $\mathcal{M}$ that sends $M=(E,\mathcal{L}(M))$ to $M[k]=(E,\mathcal{L}[k])$ where:   

\begin{equation*}
\mathcal{L}[k] = \{ S \in \mathcal{L} : \text{ there exists } T \in \mathcal{L} \text{ such that } T \supset S \text{ and } |T \setminus S| = k \}.
\end{equation*}

Keep in mind that $I+y := I \cup \{ y \}$ and $I-y:= I \setminus \{ y \}$.

\begin{proposition}
(~\cite{Union} Proposition 4.13)
If $M$ is a matroid of rank at least $k$, then $M[k]$ is a matroid.
\end{proposition}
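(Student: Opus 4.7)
The plan is to verify the four matroid axioms I1--I4 for $M[k] = (E, \mathcal{L}[k])$, with the real work concentrated in I3 and I4. Axioms I1 and I2 are quick. For I1, since $M$ has rank at least $k$, any base $T$ of $M$ has a $k$-element subset $U$, and $U \in \mathcal{L}(M)$ with $|U \setminus \emptyset| = k$ witnesses $\emptyset \in \mathcal{L}[k]$. For I2, if $B \in \mathcal{L}[k]$ has witness $T \supseteq B$ with $|T \setminus B| = k$ and $A \subseteq B$, then $T_A := A \cup (T \setminus B)$ is a subset of $T$ (hence independent in $M$) satisfying $|T_A \setminus A| = k$, so $A \in \mathcal{L}[k]$.

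The structural step I would prove next is the characterization of bases of $M[k]$: the maximal elements of $\mathcal{L}[k]$ are exactly the sets of the form $T \setminus S$ where $T$ is a base of $M$ and $S \subseteq T$ has $|S| = k$. For the forward direction, if $B$ is maximal in $\mathcal{L}[k]$ with witness $T$, then $T$ must itself be a base of $M$: otherwise, applying I3 for $M$ to $T$ against any base of $M$ yields $z \notin T$ with $T + z \in \mathcal{L}(M)$, and then $B + z \in \mathcal{L}[k]$ via the witness $T + z$, contradicting maximality of $B$. For the reverse direction, one argues that removing any $k$-element subset from a base of $M$ gives a set which cannot be extended within $\mathcal{L}[k]$, by extending any alleged witness $T'$ for $B + x$ to a base of $M$ using I4 and deriving a contradiction with the base-exchange axiom B2.

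With this characterization in hand, I3 follows from B2 for $M$. Given $B$ maximal in $\mathcal{L}[k]$ with base-witness $T_B \supseteq B$, and $A$ non-maximal in $\mathcal{L}[k]$, I would extend $A$ via I4 for $M$ to a maximal element $T_A$ of $\{S \in \mathcal{L}(M) : A \subseteq S\}$, then run base-exchange steps between $T_A$ and $T_B$, doing a small case analysis that tracks whether the swapped element lies in the deficit set $T_B \setminus B$ or in $B$, until one produces a $b \in B \setminus A$ with $A + b \in \mathcal{L}[k]$. For I4, I would apply Zorn's lemma to $\{S \in \mathcal{L}[k] : A \subseteq S \subseteq X\}$: for a chain $\mathcal{C}$, the union $S^* = \bigcup \mathcal{C}$ lies in $\mathcal{L}(M)$ by a standard argument invoking I4 for $M$ in the interval $[A, X]$, and the witnesses for the chain members, combined with I4 for $M$ on $E \setminus S^*$, yield the required $k$-element extension of $S^*$.

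The main obstacle is the infinite-rank case. Cardinality arguments that make the finite version nearly trivial fail when bases of $M$ are infinite, and both the structural lemma and the augmentation step must instead be driven by B2 plus I4. The most delicate point is proving that if $B = T \setminus S$ with $T$ a base of $M$ and $|S| = k$, then $B + x \notin \mathcal{L}[k]$ for any $x \notin B$: after extending the putative witness $T'$ to a base $\tilde T$ of $M$ via I4, one must exploit B2 between $T$ and $\tilde T$, using that $T \setminus \tilde T \subseteq S$ is finite of size at most $k$, to derive that $\tilde T$ admits a strictly larger independent extension and thereby contradict its being a base.
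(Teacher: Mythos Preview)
Your handling of I1, I2, and the characterization of the maximal elements of $\mathcal{L}[k]$ is fine, and the idea of driving I3 via base exchange between $T_A$ and $T_B$ is plausible (the paper does it slightly differently, applying I3 for $M$ directly to $I\cup F$ and $I'\cup F'$ after choosing the $k$-element extension $F$ so that $F\cap F'$ is maximal, and using the auxiliary inequality of Lemma~3.3.1).

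The real problem is your I4 argument. You propose to apply Zorn's lemma to $\{S\in\mathcal{L}[k]:A\subseteq S\subseteq X\}$ and assert that for a chain $\mathcal{C}$ the union $S^*=\bigcup\mathcal{C}$ lies in $\mathcal{L}(M)$ ``by a standard argument invoking I4 for $M$''. This is false in general: $\mathcal{L}(M)$ for an infinite matroid is \emph{not} closed under unions of chains, and I4 does not give you such closure---it only guarantees the existence of a maximal element, not that any particular chain has its union independent. For a concrete failure, take $E=\mathbb{N}$ with $\mathcal{L}(M)=\{S:|\mathbb{N}\setminus S|\ge 2\}$ and $k=1$, so $\mathcal{L}[1]=\{S:|\mathbb{N}\setminus S|\ge 3\}$. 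With $A=\emptyset$ and $X=\mathbb{N}$, the chain $\{1\}\subset\{1,2\}\subset\{1,2,3\}\subset\cdots$ lies in $\mathcal{L}[1]$, but its union $\mathbb{N}$ is not even in $\mathcal{L}(M)$. So Zorn cannot be invoked here.

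The paper circumvents this entirely: it first uses I4 for $M$ to pick a maximal $B\in\mathcal{L}(M)$ with $I\subseteq B\subseteq X$, reduces to the case $|B\setminus I|\le k$, and then proves (via Lemma~3.3.1 applied inside $M|X$) that no set $I\cup F^{+}$ with $F^{+}\subseteq X\setminus I$ of size $k+1$ can lie in $\mathcal{L}(M)$. Hence a maximal element of $\mathcal{L}[k]\cap[I,X]$ is reached after at most $k$ single-element extensions---a finite process, no Zorn needed. This finiteness is exactly what your approach is missing.
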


\begin{proof}
Since $rank(M) \geq k$, axioms $I1$ and $I2$ hold. For axiom $I3$, suppose $I$ and $I'$ are independent in $M[k]$ with $I'$ maximal and $I$ non maximal. There is a set $F' \subset E(M) \setminus I'$ of size $k$ such that, in $M$, the set $I' \cup F'$ is not only independent but, by maximality of $I'$, also a base. Similarly, there is a set $F \subset E(M) \setminus I'$ of size $k$ such that $I \cup F \in \mathcal{L} (M)$.

	We claim that $I \cup F$ is non-maximal in $\mathcal{L} (M)$ for any such $F$. Suppose $I \cup F$ is maximal for some $F$ as above. By assumption, $I$ is contained in some larger set of $\mathcal{L} (M[k])$. Hence there is a set $F^{+} \subset E(M) \setminus I$ of size $k+1$ such that $I \cup F^{+}$ is independent in $M$. Clearly $(I \cup F) \setminus (I \cup F^{+}) = F \setminus F^{+}$ is finite. So Lemma 3.3.1 implies that
\begin{equation*}
|F^{+} \setminus F| = | (I \cup F^{+}) \setminus (I \cup F) | \leq | ( I \cup F ) \setminus (I \cup F^{+})| = |F \setminus F^{+}|.
\end{equation*}
In particular, $k+1= |F^{+}| \leq |F| = k$, a contradiction.
	
	Hence we can pick $F$ such that $F \cap F'$ is maximal and, as $I \cup F$ is non-maximal in $\mathcal{L} (M)$, apply axiom $I3$ in $M$ to obtain a $x \in (I' \cup F') \setminus (I \cup F)$ such that $(I \cup F) + x \in \mathcal{L} (M)$. This means $I + x \in \mathcal{L} (M[k])$. And $x \in I' \setminus I$ follows, as $x \notin F'$ by our choice of $F$. To show axiom $I4$, let $I \subset X \subset E(M)$ with $I \in \mathcal{L} (M[k])$ be given. By the final axiom for $M$, there is a $B \in \mathcal{L} (M)$ which is maximal subject to $I \subset B \subset X$. We may assume that $F := B \setminus I$ has at most $k$ elements; for otherwise there is a superset $I' \subset B$ of $I$  such that $|B \setminus I'| = k$ and it suffices to find a maximal set containing $I' \in \mathcal{L} (M[k])$ instead of $I$.

	We claim that for any $F^{+} \subset X \setminus I$ of size $k+1$ the set $I \cup F^{+}$ is not in $\mathcal{L} (M[k])$. For a contradiction, suppose it is. Then in $M|X$, the set $B=I \cup F$ is a base and $I \cup F^{+}$ is independent and as $(I \cup F) \setminus (I \cup F) \setminus (I \cup F^{+}) \subset F \setminus F^{+}$ is finite, Lemma 3.3.1 implies

\begin{equation*}
|F^{+} \setminus F| = |(I \cup F^{+}) \setminus (I \cup F)| \leq |(I \cup F) \setminus (I \cup F^{+})| = |F \setminus F^{+}|.
\end{equation*}

This means $k+1=|F^{+}| \leq |F| = k$, a contradiction. So by successively adding single elements of $X \setminus I$ to $I$ as long as the obtained set is still in $\mathcal{L} (M[k])$, we arrive at the wanted maximal element after at most $k$ steps.
\end{proof}

	We now prove the lemma that was used in the above proof.
\begin{lemma}
(~\cite{Union} Lemma 4.14)
Let $M$ be a matroid and $I$, $B \in \mathcal{L} (M)$ with $B$ maximal and $B \setminus I$ finite. Then, $|I \setminus B| \leq |B \setminus I|$.
\end{lemma}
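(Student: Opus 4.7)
The plan is to reduce the statement to a claim about two bases with finite symmetric difference and then iterate the base exchange axiom B2. First I would apply axiom I4 to the pair $(I, E(M))$ to obtain a set $B' \in \mathcal{L}(M)$ maximal subject to $I \subseteq B' \subseteq E(M)$; this $B'$ is therefore a base of $M$. From $I \subseteq B'$ we get the two inclusions
\begin{equation*}
 I \setminus B \subseteq B' \setminus B \quad\text{and}\quad B \setminus B' \subseteq B \setminus I,
\end{equation*}
so in particular $B \setminus B'$ is finite. The lemma will follow once I prove that any two bases $B, B'$ with $|B \setminus B'| < \infty$ satisfy $|B' \setminus B| = |B \setminus B'|$, for then
\begin{equation*}
 |I \setminus B| \leq |B' \setminus B| = |B \setminus B'| \leq |B \setminus I|.
\end{equation*}

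The symmetric equality $|B' \setminus B| = |B \setminus B'|$ I would establish by induction on $n := |B \setminus B'|$. For $n = 0$ we have $B \subseteq B'$, and since $B$ is already maximal in $\mathcal{L}(M)$ this forces $B = B'$, so both sides vanish. For the inductive step, pick any $x \in B \setminus B'$ and apply axiom B2 with $B_1 = B$ and $B_2 = B'$: there exists $y \in B' \setminus B$ such that $B'' := (B - x) + y$ is a base. A direct set-theoretic computation (using $x \notin B'$ and $y \in B'$) gives
\begin{equation*}
 B'' \setminus B' = (B \setminus B') \setminus \{x\}, \qquad B' \setminus B'' = (B' \setminus B) \setminus \{y\},
\end{equation*}
so $|B'' \setminus B'| = n - 1$. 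By the inductive hypothesis applied to the bases $B''$ and $B'$ we conclude $|B' \setminus B''| = n - 1$, and then adding back the element $y$ yields $|B' \setminus B| = n$.

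Chaining this with the two inclusions above completes the proof. The only nontrivial step is the inductive one, and there the subtle point to check is that $B''$ genuinely has strictly smaller symmetric difference with $B'$ than $B$ does; the two bookkeeping identities displayed above handle this. Everything else is a straightforward bookkeeping of set inclusions, and no appeal to Zorn's lemma beyond what is already packaged into axiom I4 is required.
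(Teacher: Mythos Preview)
Your argument is correct, but it follows a different route from the paper's. The paper inducts directly on $|B \setminus I|$ without ever extending $I$ to a base: it picks $y \in B \setminus I$, and either $I+y$ is still independent (so induction applies to the pair $(I+y,B)$), or $I+y$ contains a unique circuit, which must meet $I \setminus B$ in some element $x$, and then induction applies to $((I+y)-x,B)$. Thus the paper uses circuit elimination rather than the base-exchange axiom.

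Your approach instead factors through the clean auxiliary statement that any two bases $B,B'$ with $|B \setminus B'|<\infty$ satisfy $|B'\setminus B|=|B\setminus B'|$, which you prove by iterating B2. This has the advantage of isolating a reusable fact (essentially the content of Theorems~3.1.1 and~3.1.2 in the paper), and the inductive step is a single clean exchange with no case split. The cost is the extra appeal to I4 to manufacture $B'$. Both arguments are short and of comparable difficulty; yours trades the circuit-existence property for the base-exchange axiom and an application of I4.
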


\begin{proof} The proof is by induction on $|B \setminus I|$. For $|B \setminus I| = 0$ we have $B \subset I$ and hence $B=I$ by maximality of $B$. Now suppose there is $y \in B \setminus I$. If $I+y \in \mathcal{L}$ then by induction

\begin{equation*}
|I \setminus B| = |(I+y) \setminus B| \leq |B \setminus (I+y)|= |B \setminus I| - 1
\end{equation*}

and hence $|I \setminus B| < |B \setminus I|$. Otherwise there exists a unique circuit $C$ of $M$ in $I+y$. Clearly $C$ cannot be contained in $B$ and therefore has an element $x \in I \setminus B$. Then $(I+y)-x$ is independent, so by induction

\begin{equation*}
|I \setminus B|-1 = |((I+y)-x \setminus B| \leq |B \setminus ((I+y)-x)| = |B \setminus I| - 1,
\end{equation*}

and hence $|I \setminus B| \leq |B \setminus I|$.
\end{proof}

\begin{proposition}
Consider the matroid $M_n[k]=(E_n, \mathcal{L}_n[k])$ where $M_n$ is the graph of $n$ parallel ladders from Section 3.2. We claim that $Spec(M_n[k])=\{ k,k+1,...,k+n \}$.
\end{proposition}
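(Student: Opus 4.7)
The plan is to reduce the computation of $Spec(M_n[k])$ to the known spectrum $Spec(M_n) = \{0, 1, \ldots, n\}$ established in Section 3.2, via two structural facts. First, I expect that the bases of $M_n[k]$ are precisely the sets of the form $B \setminus X$, where $B$ is a base of $M_n$ and $X \subset B$ satisfies $|X| = k$. Second, because $M_n$ has infinite rank, I expect that the finitarization satisfies $M_n[k]^{\mathrm{fin}} = M_n^{\mathrm{fin}}$. Together with Theorem 3.1.1, applied to the matroid $M_n[k]$ and its finitarization, these will let me compute each spectrum value by choosing, for a given base $S = B \setminus X$ of $M_n[k]$, a base $F$ of $M_n^{\mathrm{fin}}$ with $F \supset B$: I will then have $|F \setminus S| = |F \setminus B| + k$, which ranges over $\{k, k+1, \ldots, k+n\}$ as $B$ varies.

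For the characterization of bases, given a base $B$ of $M_n$ and $X \subset B$ with $|X| = k$, the set $S := B \setminus X$ lies in $\mathcal{L}[k]$, witnessed by $T = B$. To show $S$ is maximal in $\mathcal{L}[k]$, I would suppose $S + y \in \mathcal{L}[k]$ for some $y \notin S$; then there is an independent set $T = (S + y) \cup Z$ with $Z$ disjoint from $S + y$ and $|Z| = k$. Since $B \setminus T \subset X$ is finite, Lemma 3.3.1 applied to the pair $(T, B)$ gives $|T \setminus B| \leq |B \setminus T|$. A direct count in terms of $|Z \cap X|$ then yields $k \leq k - 1$ in the case $y \in X$ and $k + 1 \leq k$ in the case $y \notin B$, each a contradiction. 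Conversely, if $S$ is a base of $M_n[k]$, any witness $T \supset S$ extends via axiom I4 to a base $B$ of $M_n$ with $|B \setminus S| \geq k$; if this inequality were strict, then for any $y \in B \setminus S$ the set $S + y$ would belong to $\mathcal{L}[k]$ (witnessed by $k$ further elements drawn from $B \setminus (S + y)$), contradicting maximality. Hence $|B \setminus S| = k$ and $S = B \setminus X$ for $X := B \setminus S$.

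For the finitarization, I note that any finite $S' \in \mathcal{L}(M_n)$ extends by I4 to a base of $M_n$, and since $M_n$ has infinite rank one can pick $k$ additional elements from this base outside $S'$ to witness $S' \in \mathcal{L}[k]$. Thus $\mathcal{L}[k]$ and $\mathcal{L}(M_n)$ agree on finite sets, so $M_n[k]^{\mathrm{fin}} = M_n^{\mathrm{fin}}$. To conclude, let $S = B \setminus X$ be a base of $M_n[k]$. Since $B$ contains no algebraic cycle it contains no finite cycle either, so $B$ is independent in $M_n^{\mathrm{fin}}$; I4 then produces a base $F$ of $M_n^{\mathrm{fin}}$ with $F \supset B \supset X$. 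Hence
\begin{equation*}
|F \setminus S| = |F \setminus B| + |X \cap F| = |F \setminus B| + k,
\end{equation*}
which is independent of the choice of $F \supset S$ by Theorem 3.1.1 applied to $M_n[k]$. As $B$ varies over the bases of $M_n$ from Section 3.2 realizing each spectrum value $0, 1, \ldots, n$, the value $|F \setminus S|$ takes each value in $\{k, k+1, \ldots, k+n\}$, giving $Spec(M_n[k]) = \{k, k+1, \ldots, k+n\}$.

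The main technical obstacle is the maximality half of the characterization of bases of $M_n[k]$: the two case analyses using Lemma 3.3.1 require careful set-theoretic bookkeeping with $|Z \cap X|$ in each case. The remaining steps, including the identification $M_n[k]^{\mathrm{fin}} = M_n^{\mathrm{fin}}$ and the final spectrum count, follow rather directly from the results already established earlier in the chapter.
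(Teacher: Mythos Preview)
Your proposal is correct and follows essentially the same approach as the paper: identify $M_n[k]^{\mathrm{fin}} = M_n^{\mathrm{fin}}$ via agreement on finite independent sets, then obtain bases of $M_n[k]$ by deleting $k$ elements from bases of $M_n$ and read off the spectrum shift by $k$. In fact your argument is more complete than the paper's, which asserts without proof that deleting $k$ elements from a base of $M_n$ yields a base of $M_n[k]$ and does not explicitly treat the upper bound $Spec(M_n[k]) \subset \{k,\ldots,k+n\}$; your use of Lemma~3.3.1 for maximality and of Theorem~3.1.1 for the reverse inclusion fills these gaps cleanly.
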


\begin{proof}
First notice that $M_n$ and $M_n[k]$ have the same finite independent subsets and so they have the same finitarization. Let $0 \leq j \leq n$. Suppose that $B_j$ is some base in $M_n$ that shows that $j \in Spec(M)$. Deleting $k$ elements from $B_j$ will give you a base $B_j[k]$ of $M_n[k]$. It is easy to see that $B_j[k]$ extends to some basis $B^\mathrm{fin}$ of $M_n^\mathrm{fin}$ by adding $k+j$ elements.
\end{proof}

A similar argument shows that $Spec(M_{\infty}[k]) = \mathbb{N}_{ \geq k} \cup \infty$.

\begin{question}
Does there exist a matroid $M$ and natural numbers $i < j <k$ such that $i,k \in Spec(M)$ and $j \notin Spec(M)$?
\end{question}

\begin{question}
Does there exist a matroid $M$ and natural numbers $i < j$ such that $i, \infty \in Spec(M)$ and $j \notin Spec(M)$?
\end{question}

\end{section}

\begin{section}{Nearly Finitary Matroids}
We now introduce a sufficient condition that forces a nearly finitary matroid to be $k$-nearly finitary.

\begin{theorem}
Let $M=(E,\mathcal{L})$ and $M^{\mathrm{fin}}=(E,\mathcal{L}^{\mathrm{\mathrm{fin}}})$ be a matroid and its finitarization respectively. Define $\hat{M} := (E,\mathcal{K})$ where $S\in\mathcal{K}$ if there exists a base $F$ in $M^{\mathrm{fin}}$ and a base $B$ in $M$ such that $B\subset F$ and $S\subset F\setminus B$. 

If $M$ is a nearly finitary matroid such that $\hat{M}$ is a matroid, then $M$ is $k$-nearly finitary.
\end{theorem}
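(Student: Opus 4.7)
The plan is to show that under the hypotheses, $\operatorname{Spec}(M)$ is bounded; since $M$ is nearly finitary this bound will be a natural number $k$, giving the conclusion. The proof has three short steps once we understand $\hat M$.

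First I would observe that every element of $\mathcal{K}$ is finite. Indeed, if $S \in \mathcal{K}$ then by definition $S \subset F \setminus B$ for some base $F$ of $M^{\mathrm{fin}}$ and some base $B$ of $M$ with $B \subset F$; because $M$ is nearly finitary, $|F \setminus B| < \infty$, and hence $|S| < \infty$. In particular, every maximal element (i.e., every base) of $\hat M$ is a finite set.

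Next, using the assumption that $\hat M$ is a matroid, I would invoke the result established earlier in the excerpt (the base-exchange argument right before the definition of $\operatorname{rank}$) which says that in any matroid, if one base is finite, then all bases have the same finite cardinality. Applied to $\hat M$, whose bases are all finite by Step~1, this yields a common value $k := |\text{base of } \hat M| \in \mathbb{N}$.

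Finally I would use axiom I4 for $\hat M$ to bound the spectrum. Given any base $B$ of $M$ contained in a base $F$ of $M^{\mathrm{fin}}$, the set $F \setminus B$ belongs to $\mathcal{K}$, so by I4 (with $A = F \setminus B$ and $X = E$) it extends to a maximal element $T$ of $\mathcal{K}$, which is a base of $\hat M$. Thus
\begin{equation*}
|F \setminus B| \;\le\; |T| \;=\; k.
\end{equation*}
This shows $\operatorname{Spec}(M) \subset \{0,1,\ldots,k\}$, and so, by the equivalence noted in Section~3.1, $M$ is $k$-nearly finitary.

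The argument is essentially a bookkeeping exercise once one notes that $\mathcal{K}$ is downward closed by construction and that its maximal elements are automatically of the form $F \setminus B$. The only substantive ingredients are (a) near finitarity, to force all bases of $\hat M$ to be finite, and (b) the equi-cardinality of finite bases in an arbitrary matroid, which has already been proven in the excerpt. I do not foresee any real obstacle; the main thing to be careful about is to use I4 on $\hat M$ (not on $M$) to extend $F \setminus B$ to a base of $\hat M$.
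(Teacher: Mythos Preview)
Your proof is correct and is essentially the same argument as the paper's, just presented directly rather than by contrapositive: the paper assumes $M$ is not $k$-nearly finitary for any $k$, deduces that $\hat{M}$ has no finite base (by the same equi-cardinality fact you cite), and concludes from an infinite independent set in $\hat{M}$ that $M$ is not nearly finitary. The ingredients---near finitarity forcing every $S\in\mathcal{K}$ to be finite, and the fact that all bases of a matroid share the same finite cardinality once one of them is finite---are identical.
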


\begin{proof}
Suppose $M=(E,\mathcal{L})$ is a matroid such that $\hat{M}=(E,\mathcal{K})$ is a matroid and $M$ is not $k$-nearly finitary. Since $\hat{M}$ is a matroid, it has a base. Suppose there is a base $B$ in $\hat{M}$ such that $|B|=k<\infty$. Since $M$ is not $k$-nearly finitary, there exists $S\in\mathcal{K}$ such that $|S|>k$. By the basis exchange property of matroids, all bases of a matroid either have the same finite cardinality or are all infinite in cardinality. Since $S$ is independent, it is a subset of a base $B_{S}$ with cardinality $k$. That is impossible. Thus, bases in $\hat{M}$ must have infinite cardinality. Since $\hat{M}$ has an independent set of infinite size, $M$ is not nearly finitary.
\end{proof}

For an example of a class of matroids where this structure is a matroid, suppose $M$ is a finitary matroid of infinite rank. Let $N := M[j]$. Then $\hat{N}$ is the matroid whose independent subsets are independent sets in $M$ with rank at most $j$.

Our $\hat{M}$ is suggestive of a difference structure. Let $N$ and $M$ be matroids on the same ground set with $\mathcal{L}(N) \supset \mathcal{L}(M)$. We define 
\begin{equation*}
\mathcal{L}(N \ominus M) := \{ S \colon \text{there exists } B_N \in \mathcal{L} (N)^\mathrm{max}, B_M \in \mathcal{L} (M)^\mathrm{max} \text{ with } B_N \supset B_M \text{ and } S \subset B_N \setminus B_M \}.
\end{equation*}

We then define $N \ominus M := (E(N), \mathcal{L}(N \ominus M))$. It is easy to see that $\hat{M}=M^\mathrm{fin} \ominus M$. 

\begin{theorem}
If $N$ and $M$ are matroids on finite ground sets satisfying the conditions for $N \ominus M$ to be defined, $N \ominus M = (N^* \vee  M)^*$.\footnote{We thank Nathan Bowler and Ann-Kathrin Elm from the University of Hamburg for suggesting this construction in private discussions.}
\end{theorem}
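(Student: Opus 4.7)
The plan is to prove the equality by showing containment of independent-set families in both directions, with the common ingredient being an exact computation of the rank of $N^* \vee M$ on the full ground set $E$. Note that the hypothesis that $N \ominus M$ is defined forces $\mathcal{L}(M) \subset \mathcal{L}(N)$ (any base of $M$ must sit inside some base of $N$), which gives $r_M(Y) \leq r_N(Y)$ for every $Y \subset E$; this domination is what powers the rank computation below.

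For the forward inclusion $N \ominus M \subset (N^* \vee M)^*$, I would take $I \in \mathcal{L}(N \ominus M)$ and fix witnesses: bases $B_M \subset B_N$ of $M$ and $N$ with $I \subset B_N \setminus B_M$. Setting $A := E \setminus B_N$, a base of $N^*$, the set $A \cup B_M$ is a disjoint union (since $B_M \subset B_N$), lies in $\mathcal{L}(N^* \vee M)$, and has cardinality $|E| - r(N) + r(M)$. Granting the rank identity $r(N^* \vee M) = |E| - r(N) + r(M)$, this exhibits $A \cup B_M$ as a base of $N^* \vee M$ contained in $E \setminus I$, which is exactly what it means for $I$ to be independent in $(N^* \vee M)^*$.

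For the reverse inclusion, I would start with $I \in \mathcal{L}((N^* \vee M)^*)$ and pick a base $X$ of $N^* \vee M$ inside $E \setminus I$. Writing $X = A \cup B$ with $A \in \mathcal{L}(N^*)$ and $B \in \mathcal{L}(M)$, and discarding from $B$ any overlap with $A$ to make them disjoint, the equation $|A| + |B| = |E| - r(N) + r(M)$ together with the bounds $|A| \leq r(N^*) = |E| - r(N)$ and $|B| \leq r(M)$ forces both bounds to be equalities. Hence $A$ is a base of $N^*$, $B_N := E \setminus A$ is a base of $N$, and $B$ is a base of $M$ with $B \subset B_N$. Since $I$ is disjoint from $A \cup B$, we obtain $I \subset B_N \setminus B$, the defining condition for $I \in \mathcal{L}(N \ominus M)$.

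The one genuine computation, and the main obstacle, is the rank identity $r(N^* \vee M) = |E| - r(N) + r(M)$. I would derive it from the Edmonds–Nash-Williams formula
\begin{equation*}
r(N^* \vee M)(E) \;=\; \min_{Y \subset E}\bigl[\,|E \setminus Y| + r_{N^*}(Y) + r_M(Y)\,\bigr].
\end{equation*}
Evaluating at $Y = E$ supplies the upper bound $|E| - r(N) + r(M)$. For the matching lower bound, I would apply the duality identity $r_{N^*}(Y) = |Y| + r_N(E \setminus Y) - r(N)$ to rewrite the objective as $|E| + r_N(E \setminus Y) + r_M(Y) - r(N)$, reducing the claim to $r_N(E \setminus Y) + r_M(Y) \geq r(M)$. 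Here $\mathcal{L}(M) \subset \mathcal{L}(N)$ gives $r_N(E \setminus Y) \geq r_M(E \setminus Y)$, and subadditivity of $r_M$ on the partition $Y \sqcup (E \setminus Y)$ delivers $r_M(E \setminus Y) + r_M(Y) \geq r_M(E) = r(M)$. This is the only place the assumption of a finite ground set is essential, ensuring the rank function and the min in the union formula behave as expected.
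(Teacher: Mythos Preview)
Your proof is correct and follows the same overall strategy as the paper's: both argue by identifying the bases of $(N^* \vee M)^*$ with sets of the form $B_N \setminus B_M$ for nested bases $B_M \subset B_N$. The paper simply asserts, without justification, that any disjoint union of a base of $N^*$ and a base of $M$ is a base of $N^* \vee M$, and conversely that every base of $N^* \vee M$ arises this way. You prove exactly this missing step by computing $r(N^* \vee M) = |E| - r(N) + r(M)$ via the Edmonds--Nash-Williams union rank formula together with the dual rank identity $r_{N^*}(Y) = |Y| + r_N(E \setminus Y) - r(N)$ and the domination $r_M \leq r_N$ coming from $\mathcal{L}(M) \subset \mathcal{L}(N)$. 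So the difference is one of rigor rather than idea: your route is self-contained and makes explicit where finiteness and the containment hypothesis are used, while the paper's version is shorter but leaves the key claim about bases of the union to the reader.
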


\begin{proof}
Let $B_M$ be a base of $M$. By assumption, $B_M$ is independent in $N$ and is thus contained in some base $B_N$ of $N$. So $B_N^*:=E(N) \setminus B_N$ is disjoint from $B_M$. So $B_N^* \cup B_M$ is a maximal independent subset of $N^* \vee M$. The complement of $B_N^* \cup B_M$ with respect to $E(N)$ is $B_N \setminus B_M$ and is a base of $(N^* \vee M)^*$. Conversely, bases of $N^* \vee M$ are of the form $A_N^* \cup A_M$ where $A_N^*$ and $A_M$ are bases of $N^*$ and $M$ respectively with $A_N^* \cap A_M = \emptyset$. We know it is possible to make $A_N^*$ and $A_M$ disjoint since $\mathcal{L}(M) \subset \mathcal{L}(N)$.  Then $A_N := E(N) \setminus A_N^*$ is a base of $N$ that contains the base $A_M$ of $M$. We then have
\begin{equation*}
E(N) \setminus (A_N^* \cup A_M) = A_N \setminus A_M.
\end{equation*}
We thus see that the bases of $N \ominus M$ are precisely the bases of $(N^* \vee M)^*$.
\end{proof}

We remark that the assumption that $E(N)=E(M)$ is a finite set is crucial. For infinite matroids, there exists examples of nearly finitary matroids $N$ and $M$ on a common ground set such that there are disjoint bases $B_N$ and $B_M$ of $N$ and $M$ respectively with $B_N \cup B_M$ non maximal in $N \vee M$. This kind of phenomenon was first observed in ~\cite{Union}.

We present an example different from the one in ~\cite{Union}. Suppose $M$ is the algebraic cycle matroid of the one way infinite ladder graph $G$ and suppose $N:=M^\mathrm{fin}$. As before, our one way infinite ladder starts from the left and extends infinitely far to the right. Then $N$ and $M$ satisfy the condition that allows us to define $N \ominus M=\hat{M}$. Since the $Spec(M)=\{0,1\}$, $\hat{M}$ has rank $1$. All rank $1$ independence systems are matroids since every non empty independent set is maximal. We can show that every singleton edge set is independent in $\hat{M}$. Consider the set $T$ of top edges and the set $S$ of bottom edges of $G$. Then $T \cup S$ is a base in $M$ but not a base in $M^\mathrm{fin}$. We get a base in $M^\mathrm{fin}$ by adding any middle edge. This shows that singleton sets of middle edges are independent in $\hat{M}$. Finally consider $B_N:=T \cup S + e$ where $e$ is the left most middle edge. $B_N$ is a base in $M^\mathrm{fin}$. Removing any single edge from $B_N$ gives us a base in $M$. Thus, all singleton sets of top or bottom edges are independent in $\hat{M}$. Thus, independent sets of $\hat{M}$ consist of the empty set and all singleton edge sets. Since $0 \in Spec(M)$, pick a base $B$ of $M$ that is also a base of $N$. With $B^*:= E(M) \setminus B$, we have that

\begin{equation*}
E(M)=B \cup B^* \in \mathcal{L}(N^* \vee M).
\end{equation*}

Thus, $(N^* \vee M)^*$ has only the empty set as its independent set. It is clear that $\hat{M} \neq (N* \vee M)^*$. Now consider $B_N$ from before. Define $B_{N^*}$ to be set of all middle edges excluding the left-most middle edge. Then $B_{N^*}$ is a base in $N^*$ disjoint from $B_M:=S \cup T$ which is a base in $M$. However, $B_{N^*} \cup B_M$ is a proper subset of $E(M)$ and $\mathcal{L}(N^* \vee M) = 2^{E(M)}$. Thus, a disjoint union of a base in $N^*$ and a base $M$ is not necessarily a base in $N^* \vee M$. $B_{N^*}$ being disjoint from $B_M$ means that $B_N=E(M) \setminus B(N^*)$ contains $B_M$. So $(N^* \vee M)^*$ can have bases that are strictly smaller than $\hat{M}$.

Another issue that arises is that $N^* \vee M$ is not always going to be a matroid. An example of two matroids whose union is not a matroid is shown in~\cite{Union}. By the Nearly Finitary Matroid Union theorem, $N^* \vee M$ is a matroid when $N^*$ and $M$ are both nearly finitary. It may still be the case that $(M^\mathrm{fin})^* \vee M$ is always a matroid but we have no proof of this as of now. This motivates the following theorem:

\begin{theorem}
Suppose $M$ is a nearly finitary and nearly cofinitary matroid. Then $(M^\mathrm{fin*} \vee M)^*$ is a matroid.
\end{theorem}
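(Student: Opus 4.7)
The plan is to apply the Nearly Finitary Matroid Union Theorem from \cite{Union}, which guarantees that the union of two nearly finitary matroids is a matroid. Since duality preserves being a matroid, it suffices to show that $M^{\mathrm{fin}*} \vee M$ is a matroid, and for this it suffices to show that both $M^{\mathrm{fin}*}$ and $M$ are nearly finitary. Since $M$ is nearly finitary by hypothesis, the entire content of the argument reduces to verifying that $M^{\mathrm{fin}*}$ is nearly finitary, which I would establish using the hypothesis that $M$ is nearly cofinitary (that is, $M^*$ is nearly finitary).

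First I would record the inclusion $\mathcal{L}(M^{\mathrm{fin}*}) \subset \mathcal{L}(M^*)$, obtained by complementing $\mathcal{L}(M) \subset \mathcal{L}(M^{\mathrm{fin}})$ at the level of bases. If $B^*$ is a base of $M^{\mathrm{fin}*}$, then $B := E \setminus B^*$ is a base of $M^{\mathrm{fin}}$, and by near-finitariness of $M$ contains some base $B_M$ of $M$ with $B \setminus B_M$ finite. Complementing, $B^* \subset E \setminus B_M$, where $E \setminus B_M$ is a base of $M^*$ and $(E \setminus B_M) \setminus B^* = B \setminus B_M$ is finite. Chaining this with near-finitariness of $M^*$, each base $E \setminus B_M$ of $M^*$ sits inside a base $G^*$ of $(M^*)^{\mathrm{fin}}$ with finite difference, so altogether every base $B^*$ of $M^{\mathrm{fin}*}$ is contained in a base $G^*$ of $(M^*)^{\mathrm{fin}}$ with $|G^* \setminus B^*|$ finite.

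Next, let $F^*$ be an arbitrary base of $(M^{\mathrm{fin}*})^{\mathrm{fin}}$ containing $B^*$. The inclusion $\mathcal{L}(M^{\mathrm{fin}*}) \subset \mathcal{L}(M^*)$ lifts to $\mathcal{L}((M^{\mathrm{fin}*})^{\mathrm{fin}}) \subset \mathcal{L}((M^*)^{\mathrm{fin}})$, so $F^*$ is independent in the matroid $(M^*)^{\mathrm{fin}}$ and extends to some base $H^*$ of $(M^*)^{\mathrm{fin}}$ with $B^* \subset F^* \subset H^*$. Applying the natural generalization of Theorem 3.1.1 (in any matroid $N$, for an independent set $X$ and two bases $Y_1, Y_2$ of $N$ containing $X$, one has $|Y_1 \setminus X| = |Y_2 \setminus X|$) to $N = (M^*)^{\mathrm{fin}}$, $X = B^*$, $Y_1 = G^*$, $Y_2 = H^*$, I obtain $|H^* \setminus B^*| = |G^* \setminus B^*|$, which is finite. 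Hence $|F^* \setminus B^*| \leq |H^* \setminus B^*|$ is finite, so $M^{\mathrm{fin}*}$ is nearly finitary. The Union Theorem then gives that $M^{\mathrm{fin}*} \vee M$ is a matroid, and dualizing finishes the proof.

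The main obstacle is the invocation of the generalization of Theorem 3.1.1: although that theorem as stated is phrased for a matroid and its finitarization, its proof uses only the base-exchange axiom in the ambient matroid together with finiteness of the smaller difference, so it should adapt without substantive change to the claim that any two bases of a matroid containing a fixed independent set have the same size-difference from it. Verifying this adaptation is routine but is where the care has to be taken; everything else is the chain of inclusions traced out above and the black-box application of the Nearly Finitary Matroid Union Theorem of \cite{Union}.
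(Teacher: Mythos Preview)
Your proposal is correct and follows essentially the same approach as the paper: reduce to showing $M^{\mathrm{fin}*}$ is nearly finitary, chain the complementation $M^{\mathrm{fin}*}\to M^*$ with the near-cofinitariness of $M$ to land inside a base of $(M^*)^{\mathrm{fin}}$ with finite difference, then descend to $(M^{\mathrm{fin}*})^{\mathrm{fin}}$ and apply the Nearly Finitary Matroid Union Theorem. Your treatment of the last descent step is in fact more explicit than the paper's; the generalization of Theorem~3.1.1 you invoke is immediate once one observes that $Y_1\setminus X$ and $Y_2\setminus X$ are bases of the contraction $N/X$, and all bases of a matroid have the same (possibly infinite) cardinality.
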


\begin{proof} Since we already have that $M$ is nearly finitary, it remains to show that $M^\mathrm{fin*}$ is nearly finitary. Let $F^*$ be a base in $M^\mathrm{fin*}$. Then $F:=E(M) \setminus F^*$  is a base in $M^\mathrm{fin}$ and contains some base $B$ of $M$. Since $M$ is nearly finitary, $|F \setminus B|$ is finite. Also $B^*:= E(M) \setminus B$ is a base of $M^*$ which contains $F^*$. Note that $|B^* \setminus F^*| = | F \setminus B| < \infty$. Since $M$ is nearly cofinitary, $B^*$ can be extended to some base in the finitarization of $M^*$ by adding finitely many elements. It follows that  we can extend a base $F^*$ of $M^\mathrm{fin*}$ to a base in the finitarization of $M^*$ by adding finitely many elements. Observe that bases in the finitarization of $M^\mathrm{fin*}$ are contained in bases of $(M^*)^\mathrm{fin}$. So we can extend $F^*$ to some base in $(M^\mathrm{fin*})^\mathrm{fin}$ by adding finitely many elements. This shows that $M^\mathrm{fin*}$ is nearly finitary. Now, we can apply the Nearly Finitary Matroid Union theorem to conclude that $M^\mathrm{fin*} \vee M$ is a matroid. Since the dual of a matroid is a matroid, the claim is proven.
\end{proof}

We propose the following independence system. Let $M$ be a nearly finitary matroid and define the following
\begin{equation*}
S(M):= \{ F^* \cup B \colon F^* \text{ is a base of } M^\mathrm{fin*}, B \text{ is a base of } M, \text{ and } F^* \cap B = \emptyset \}. 
\end{equation*}

We then define $S(M)_\mathrm{min}$ to be the minimal elements of $S(M)$ with respect to set inclusion. From before, bases of $(M^\mathrm{fin*}) \vee M$ are the maximal elements of $S(M)$ with respect to set inclusion. We speculate that $S(M)_\mathrm{min}$ is the set of bases of some matroid $A$ and that $A^* = \hat{M}$. As of now, we do not even have proof that $S(M)_\mathrm{min}$ is necessarily non-empty. However, we can prove the following.

\begin{theorem}
$S(M)_\mathrm{min}$ is non-empty if $M$ is a $k$-nearly finitary matroid.
\end{theorem}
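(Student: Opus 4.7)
The plan is to construct a minimal element of $S(M)$ by exploiting the uniform size bound supplied by the hypothesis that $M$ is $k$-nearly finitary. First I would verify that $S(M)$ itself is non-empty: start from any base $B$ of $M$. As discussed in the introduction, $B$ extends to a base $F$ of $M^{\mathrm{fin}}$ (apply axiom I4 to the family $\{S \in \mathcal{L}^{\mathrm{fin}} : B \subseteq S \subseteq E\}$). Then $F^* := E(M) \setminus F$ is a base of $M^{\mathrm{fin}*}$ with $F^* \cap B = \emptyset$, so $F^* \cup B \in S(M)$.

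The next step is the key observation that the complement of any member of $S(M)$ has bounded size. Indeed, if $F^* \cup B \in S(M)$, then $F^* \cap B = \emptyset$ forces $B \subseteq E(M) \setminus F^* = F$, where $F$ is the base of $M^{\mathrm{fin}}$ with $F^* = E(M) \setminus F$. Therefore
\begin{equation*}
E(M) \setminus (F^* \cup B) = F \setminus B,
\end{equation*}
and by the $k$-nearly finitary hypothesis this set has cardinality at most $k$. So the function $X \mapsto |E(M) \setminus X|$ takes values in the finite set $\{0, 1, \ldots, k\}$ on $S(M)$.

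To produce an element of $S(M)_{\mathrm{min}}$, I would fix some $X_0 \in S(M)$ (which exists by the first step) and consider the non-empty subfamily $T := \{X \in S(M) : X \subseteq X_0\}$. Since $X \mapsto |E(M) \setminus X|$ maps $T$ into a finite set, it attains a maximum at some $Y \in T$. I claim $Y$ is minimal in $S(M)$: if some $Z \in S(M)$ satisfied $Z \subsetneq Y$, then $Z \subseteq X_0$ places $Z$ in $T$, and $|E(M) \setminus Z| > |E(M) \setminus Y|$ contradicts maximality of $Y$.

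I do not expect any significant obstacle. The only point requiring care is ensuring that a base of $M$ really does extend to a base of $M^{\mathrm{fin}}$, but this is exactly the fact recorded in the introduction using axiom I4; beyond that, the argument reduces to a finite maximization problem, so no appeal to Zorn's lemma or further choice is necessary.
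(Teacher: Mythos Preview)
Your argument is correct and follows essentially the same route as the paper: the key observation in both is that $E(M)\setminus(F^{*}\cup B)=F\setminus B$ has cardinality at most $k$, and from this bound both you and the paper conclude that any strictly descending chain in $S(M)$ has length at most $k+1$, hence a minimal element exists. Your write-up is a bit more explicit (you verify $S(M)\neq\emptyset$ and locate the minimal element by maximizing $|E(M)\setminus X|$ below a fixed $X_{0}$), but the substance is identical.
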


\begin{proof} Suppose that $S_2:=F_{2}^* \cup B_2$ properly contains $S_1:=F_{1}^* \cup B_1$ with $F_{i}^*$ and $B_i$ being bases of $M^\mathrm{fin*}$ and $M$ respectively and $B_i \cap F_{i}^* = \emptyset$ for each $i \in \{ 1,2 \}$. Then $S_{1}^*:=E \setminus S_1$ properly contains $S_{2}^*:= E \setminus S_2$. $S_{i}^* = F_i \setminus B_i$ where $F_i= E \setminus F_{i}^*$ and $F_i \supset B_i$. Since $M$ is $k$-nearly finitary, $|S_{2}^*|<|S_{1}^*| \leq k$. We thus see that any chain of containments of elements of $S(M)$ must be finite. So $S(M)_\mathrm{min}$ is non-empty.

In her master's thesis ~\cite{Cofinitary}, Elm proved that every nearly finitary matroid $M$ that is also cofinitary must be $k$-nearly finitary for some $k$. From our previous theorem, this shows that every cofinitary nearly finitary matroid $M$ has a non-empty $S(M)_\mathrm{min}$.
\end{proof}

We have been considering matroids that we can construct so far. Let us now consider a hypothetical counterexample that would solve problem $P1$.
\begin{theorem}
Fix a $k \in \mathbb{N}$. Suppose that there exists some matroid $M=(E(M),\mathcal{L} (M))$ that is nearly finitary but not $n$-nearly finitary for all $n \in \mathbb{N}$. Then $M[k]$ is also nearly finitary but not $n$-nearly finitary for all $n \in \mathbb{N}$. 
\end{theorem}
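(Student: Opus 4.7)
The plan is to prove both required properties using two preliminary observations. First, $M$ must have infinite rank, since any finite-rank matroid coincides with its finitarization and is therefore $0$-nearly finitary, contradicting the hypothesis. Consequently, every finite $M$-independent set can be augmented within $M$ by $k$ further elements, so the finite $M$-independent and finite $M[k]$-independent sets agree and $M[k]^{\mathrm{fin}} = M^{\mathrm{fin}}$. I also record a numerical fact that will be reused: for any base $B$ of $M$, any $X \subset B$ with $|X| = k$, and any base $\hat{B}$ of $M$ containing $B' := B \setminus X$, applying Lemma 3.3.1 symmetrically to $B$ and $\hat{B}$ gives $|B \setminus \hat{B}| = |\hat{B} \setminus B|$ (both are finite and at most $k$, since $B \setminus \hat{B} \subset X$), from which a short computation using $X \setminus \hat{B} = B \setminus \hat{B}$ yields $|\hat{B} \setminus B'| = k$.

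For the failure of $n$-nearly finitariness, given any $n$ I invoke the hypothesis to pick a base $B$ of $M$ and a base $F$ of $M^{\mathrm{fin}}$ with $B \subset F$ and $|F \setminus B| > n$. Choose any $X \subset B$ with $|X| = k$ and set $B' := B \setminus X$. Then $B' \in \mathcal{L}[k]$ is witnessed by $T = B$; maximality of $B'$ in $M[k]$ follows because, for any $y \notin B'$, either $B' + y$ is $M$-dependent or every base $\hat{B}$ of $M$ containing $B' + y$ satisfies $|\hat{B} \setminus (B' + y)| = k - 1$ by the preliminary fact (since $y \in \hat{B}$ and $|\hat{B} \setminus B'| = k$), so no $M$-independent $T \supset B' + y$ can achieve $|T \setminus (B' + y)| = k$. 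Hence $B'$ is a base of $M[k]$, $F$ is a base of $M[k]^{\mathrm{fin}} = M^{\mathrm{fin}}$ containing $B'$, and $|F \setminus B'| = |F \setminus B| + k > n$.

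For nearly finitariness, let $B'$ be a base of $M[k]$ contained in a base $F'$ of $M^{\mathrm{fin}}$. Apply axiom I4 in $M$ to obtain a maximal element $\tilde{B}$ of $\{S \in \mathcal{L}(M) : B' \subset S \subset F'\}$. The critical step is showing $\tilde{B}$ is actually a base of all of $M$: nearly finitariness of $M$ supplies a base $B^*$ of $M$ with $B^* \subset F'$, and if $\tilde{B}$ were not a base of $M$, axiom I3 applied to the base $B^*$ and the non-maximal independent $\tilde{B}$ would produce some $b \in B^* \setminus \tilde{B} \subset F' \setminus \tilde{B}$ with $\tilde{B} + b$ independent, contradicting the maximality of $\tilde{B}$ inside $F'$. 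Nearly finitariness of $M$ then forces $|F' \setminus \tilde{B}|$ finite. Finally, the maximality of $B'$ in $M[k]$ forces $|\tilde{B} \setminus B'| \leq k$: otherwise one picks $y \in \tilde{B} \setminus B'$, so $|\tilde{B} \setminus (B' + y)| \geq k$, and one chooses $T \subset \tilde{B}$ with $T \supset B' + y$ and $|T \setminus (B' + y)| = k$ by dropping the excess elements of $\tilde{B} \setminus (B' + y)$, witnessing $B' + y \in \mathcal{L}[k]$ and contradicting the maximality of $B'$. Combining, $|F' \setminus B'| \leq |F' \setminus \tilde{B}| + k$ is finite. The main obstacles I expect are promoting $\tilde{B}$ from merely maximal-in-$F'$ to a full base of $M$ (where the nearly-finitary hypothesis enters through $B^*$) and establishing the upper bound $|\tilde{B} \setminus B'| \leq k$ from the maximality of $B'$ in $M[k]$.
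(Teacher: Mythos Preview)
Your proof is correct, and the underlying idea—that bases of $M[k]$ and bases of $M$ differ by exactly $k$ elements, so the passage from $M$ to $M[k]$ shifts everything in the spectrum by $k$—is the same as the paper's. The paper's proof is much terser: for ``not $n$-nearly finitary'' it simply asserts that a base $B$ of $M$ contains a base $B[k]$ of $M[k]$ (implicitly via the general containment lemma proved later as Lemma~5.1.1), while you explicitly construct $B' = B \setminus X$ and verify maximality using Lemma~3.3.1. For ``nearly finitary'' the paper extends a base $B[k]$ of $M[k]$ \emph{upward} to a base $B$ of $M$ by adding $k$ elements, then to \emph{some} base of $M^{\mathrm{fin}}$, tacitly invoking Theorem~3.1.1 to conclude that any containing base $F'$ of $M^{\mathrm{fin}}$ also gives a finite difference. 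Your approach instead works with the given pair $(B',F')$ directly, sandwiching a base $\tilde{B}$ of $M$ between them via I4 and I3, and then bounding $|\tilde{B}\setminus B'|\le k$ from the maximality of $B'$ in $M[k]$.

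The upshot: your argument is more self-contained and verifies the paper's own (stronger) definition of nearly finitary head-on, at the cost of more bookkeeping; the paper's argument is shorter but leans on surrounding structural results (Theorem~3.1.1 and the base-containment lemma). Neither route introduces a genuinely new idea beyond the basic correspondence between bases of $M$ and bases of $M[k]$.
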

\begin{proof}
First, we show that $M[k]$ is nearly finitary. Any base $B[k]$ of $M[k]$ can be extended to a base $B$ of $M$ by adding $k$ elements which can then be extended to a base of $M^{fin}=M[k]^{fin}$ by adding another finite set of elements. Since $M$ is not $n$-nearly finitary there is some base $F$ of $M^\mathrm{fin}$ containing some base $B$ of $M$ with $|F \setminus B| \geq n$. $B$ contains a base $B[k]$ in $M[k]$ and $|F \setminus B[k]| \geq n$.
\end{proof}

\begin{theorem}
Let $M=(E(M),\mathcal{L} (M))$ be a nearly finitary matroid that is not $k$-nearly finitary for any $k \in \mathbb{N}$ and let $N=(E(N), \mathcal{L}(N))$ be a nearly finitary matroid with $E(N)$ disjoint from $E(M)$. Then $M \vee N$ is a nearly finitary matroid that is not $k$-nearly finitary for any $k \in \mathbb{N}$.
\end{theorem}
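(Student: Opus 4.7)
The plan is to leverage the Nearly Finitary Matroid Union Theorem from~\cite{Union} for the matroid structure and near finitariness of $M \vee N$, and then transfer the unbounded-spectrum witnesses for $M$ directly into $M \vee N$ by padding with an arbitrary base of $N$.

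First, I would record the basic compatibility between disjoint union and finitarization. Since $E(M) \cap E(N) = \emptyset$, a finite set $S \subset E(M) \cup E(N)$ lies in $\mathcal{L}(M \vee N)$ if and only if $S \cap E(M) \in \mathcal{L}(M)$ and $S \cap E(N) \in \mathcal{L}(N)$. Taking finite subsets, a set $T \subset E(M) \cup E(N)$ is independent in $(M \vee N)^{\mathrm{fin}}$ if and only if $T \cap E(M) \in \mathcal{L}(M^{\mathrm{fin}})$ and $T \cap E(N) \in \mathcal{L}(N^{\mathrm{fin}})$. Consequently, the bases of $M \vee N$ are precisely the disjoint unions $B_M \cup B_N$ with $B_M$ a base of $M$ and $B_N$ a base of $N$, and the bases of $(M \vee N)^{\mathrm{fin}}$ are precisely the disjoint unions $F_M \cup F_N$ with $F_M$ a base of $M^{\mathrm{fin}}$ and $F_N$ a base of $N^{\mathrm{fin}}$.

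Next, fix an arbitrary $k \in \mathbb{N}$. Because $M$ is not $k$-nearly finitary, I can choose a base $B_M$ of $M$ and a base $F_M$ of $M^{\mathrm{fin}}$ with $B_M \subset F_M$ and $|F_M \setminus B_M| > k$. Pick any base $B_N$ of $N$; since $B_N$ is independent in $N^{\mathrm{fin}}$, axiom I4 applied to $B_N \subset E(N)$ yields a base $F_N$ of $N^{\mathrm{fin}}$ with $B_N \subset F_N$. Then $B := B_M \cup B_N$ is a base of $M \vee N$ and $F := F_M \cup F_N$ is a base of $(M \vee N)^{\mathrm{fin}}$ with $B \subset F$, and
\begin{equation*}
|F \setminus B| = |F_M \setminus B_M| + |F_N \setminus B_N| \geq |F_M \setminus B_M| > k.
\end{equation*}
Hence $M \vee N$ is not $k$-nearly finitary, and since $k$ was arbitrary it is not $k$-nearly finitary for any $k \in \mathbb{N}$.

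There is no real obstacle: the argument is essentially bookkeeping once one observes that disjoint unions of bases in $M$ and $N$ are bases in $M \vee N$, and similarly for the finitarizations. The only thing worth noting is why $B_N$ extends to a base $F_N$ of $N^{\mathrm{fin}}$, which is just axiom I4 for $N^{\mathrm{fin}}$; near finitariness of $N$ is not actually needed for the non-$k$-nearly-finitary conclusion, only for the fact (supplied by~\cite{Union}) that $M \vee N$ is a matroid in the first place.
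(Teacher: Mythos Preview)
Your argument is correct and follows essentially the same route as the paper's proof: invoke the Nearly Finitary Matroid Union Theorem for the matroid/nearly-finitary part, then for each $k$ take a witnessing pair $B_M \subset F_M$ in $M$ with $|F_M \setminus B_M| > k$, pad by $B_N \subset F_N$ on the $N$ side, and read off the inequality. The only noteworthy difference is that the paper establishes that $F_M \cup F_N$ is a base of $(M \vee N)^{\mathrm{fin}}$ by citing Proposition~4.12 of \cite{Union} (namely $M^{\mathrm{fin}} \vee N^{\mathrm{fin}} = (M \vee N)^{\mathrm{fin}}$), whereas you prove this directly from the disjointness of the ground sets; your version is self-contained, and your closing remark that near finitariness of $N$ is only used to guarantee $M \vee N$ is a matroid is a valid observation the paper does not make explicit.
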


\begin{proof}
By the nearly finitary matroid union theorem in ~\cite{Union}, the matroid union $M \vee N$ is a nearly finitary matroid. Moreover, we show that it is not $n$-nearly finitary. For any $n \in \mathbb{N}$ pick a base $F_M$ of $M^\mathrm{fin}$ and a base $B_M$ of $M$ such that $F_M$ contains $B_M$ and $|F_M \setminus B_M| > n$. We can do this since $M$ is not $n$-nearly finitary. Let $B_N$ be any base of $N$ and let $F_N$ be a base of $N^\mathrm{fin}$ containing $B_N$. Then $B_M \cup B_N$ is a base of $M \vee N$ contained in $F_M \cup F_N$. The difference between these two sets must at least contain $F_M \setminus B_M$ since $B_N$ is disjoint from $F_M$.  Moreover, $F_M \cup F_N$ is a base in $M^\mathrm{fin} \vee N^\mathrm{fin}$. By Proposition 4.12 in ~\cite{Union}, $M^\mathrm{fin} \vee N^\mathrm{fin} = (M \vee N)^\mathrm{fin}$ so $F_M \cup F_N$ is a base of $(M \vee N)^\mathrm{fin}$. So we have
\begin{equation*}
|F_M \cup F_N \setminus B_M \cup B_N| \geq |F_M \setminus B_M| > n.
\end{equation*}
Thus, $M \vee N$ is not $n$-nearly finitary as claimed earlier.
\end{proof}
We have thus showed that if there is even one counterexample to this conjecture, there is quite a large family of counterexamples. One natural question that arises is whether there is some refinement of our assumption that $E(N)$ is disjoint from $E(M)$ such that we can generate even more counterexamples.

Consider the finitary matroid $N=(\mathbb{N}, 2^{\mathbb{N}})$ with $\mathbb{N}$ disjoint from $E(M)$. Then $M \vee N$ is a nearly finitary matroid that is not $n$-nearly finitary. Moreover, $M \vee N$ has infinitely many coloops.

Note that if we completely drop the assumption that $E(N)$ is disjoint from $E(M)$, this statement is no longer true. Let $A_N=(E(A),\mathcal{L}(A_N))$ be a nearly finitary matroid that is not $k$-nearly finitary and let $A=(E(A), 2^{E(A)})$. By the nearly finitary matroid union theorem, $A[1] \vee A_N$ is a nearly finitary matroid. Since $A_N$ is not finitary, it has infinite rank and $E(A)$ must have infinite cardinality. Bases of $A_N$ have infinite cardinality and are thus non-empty. Let $e$ be an element of some base in $A_N$. Then $\{e\}$ is independent in $A_N$. $S_A:=E(A) - e$ is independent in $A[1]$ and $S_A + e=E(A)$ is independent in $A[1] \vee A_N$. So $A[1] \vee A_N=(E(A), 2^{E(A)})=A$. Consequently, $A[1]$ is exactly $1$-nearly finitary but $A[1] \vee A_N= A$ is finitary.

We can further extend this family using deletion and contraction operations defined earlier.
\begin{theorem}
Suppose that $M$ is a nearly finitary matroid that is not $n$-nearly finitary for any natural number $n$. Furthermore, suppose that $S \subset E(M)$ is a finite set. Then $M - S$ is a nearly finitary matroid that is not $n$-nearly finitary for any natural number $n$.
\end{theorem}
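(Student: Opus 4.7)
The plan is to prove the two claims in sequence, first that $M - S$ is nearly finitary and then that it is not $n$-nearly finitary for any $n$. Write $X := E(M) \setminus S$. The basic identity $(M - S)^{\mathrm{fin}} = M^{\mathrm{fin}} - S$ is immediate from the definitions, and the two key tools are Theorems 3.1.1--3.1.2 and Lemma 3.3.1.

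For the nearly finitary half, I would start with an arbitrary base $B'$ of $M - S$, extend it via I4 to a base $B$ of $M$ (forcing $B \cap X = B'$ since $B \setminus B' \subset S$), take a base $F$ of $M^{\mathrm{fin}}$ containing $B$ (with $|F \setminus B| < \infty$ by hypothesis), and extend $F_0 := F \cap X$ to a base $F'$ of $(M - S)^{\mathrm{fin}}$. Observing that $F \cap F' = F_0$ gives $F \setminus F' = F \cap S$ of size at most $|S|$, so Lemma 3.3.1 applied in the finitary matroid $M^{\mathrm{fin}}$ to the base $F$ and independent set $F'$ yields $|F' \setminus F_0| \leq |S|$, and hence $|F' \setminus B'| \leq |S| + |F \setminus B| < \infty$. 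Theorem 3.1.1 then guarantees the same finite bound for every base of $(M - S)^{\mathrm{fin}}$ containing $B'$.

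For the second half, fix $n$ and use that $M$ is not $(n + 2|S|)$-nearly finitary to pick $F$ a base of $M^{\mathrm{fin}}$ and $B$ a base of $M$ with $B \subset F$ and $|F \setminus B| \geq n + 2|S|$. Extend $F_0 := F \cap X$ to a base $F'$ of $(M - S)^{\mathrm{fin}}$. Apply I4 twice: first to produce a maximal element $B^*_0$ of $\{A \in \mathcal{L}(M) : B \cap X \subset A \subset F_0\}$, which is a base of the restriction $M | F_0$; then to extend $B^*_0$ to a maximal element $\tilde B$ of $\{A \in \mathcal{L}(M - S) : B^*_0 \subset A \subset F'\}$. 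Maximality of $B^*_0$ inside $F_0$ forces $\tilde B \cap F_0 = B^*_0$. Lemma 3.3.1 applied in $M$ to $B$ and $B^*_0$, using $B \setminus B^*_0 = B \cap S$, gives $|B^*_0 \setminus (B \cap X)| \leq |S|$. The duality argument from the introduction furnishes a base $B^*$ of $M - S$ contained in $F'$, and both $B^*$ and $\tilde B$ are bases of the restriction matroid $(M - S) | F'$, so their complements in $F'$ have the same cardinality. Chaining gives
\begin{equation*}
|F' \setminus B^*| = |F' \setminus \tilde B| \geq |F_0 \setminus \tilde B| = |F_0 \setminus B^*_0| \geq |(F \setminus B) \cap X| - |S| \geq |F \setminus B| - 2|S| \geq n.
\end{equation*}

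The hardest step is the identification $|F' \setminus B^*| = |F' \setminus \tilde B|$: it cannot come directly from Theorem 3.1.2 in $M - S$ because $\tilde B$ need not be a base of $M - S$, only of the restriction $(M - S) | F'$; the fix is to notice that the honest base $B^*$ is also a base of this restriction and to invoke cobase equicardinality inside $(M - S) | F'$, which follows from the fact recalled in the introduction that all bases of a matroid (and hence also of its dual) share a common cardinality. Everything else is bookkeeping around the two applications of Lemma 3.3.1 that bound $|F' \setminus F_0|$ and $|B^*_0 \setminus (B \cap X)|$ by $|S|$ each.
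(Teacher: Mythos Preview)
Your proof is correct and follows the same route as the paper: both use the identity $(M-S)^{\mathrm{fin}} = M^{\mathrm{fin}} - S$, the margin $|F\setminus B|\geq n+2|S|$, and Lemma~3.3.1 to bound the sizes of the two extensions. The paper is much terser---it simply asserts that $M-S$ is ``clearly nearly finitary'' and that $B\setminus S$ extends to a base of $M-S$ inside $F_{M-S}$ by adding at most $|S|$ elements---whereas you supply these details.

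One simplification worth noting: your $\tilde B$ is already a base of $M-S$, not merely of the restriction $(M-S)|F'$. Since $F'$ contains a base $B^*$ of $M-S$, if $\tilde B$ were non-maximal in $M-S$ then axiom~I3 would produce an element of $B^*\setminus\tilde B\subset F'$ that could be added to $\tilde B$, contradicting the maximality of $\tilde B$ inside $F'$. This observation is what the paper is tacitly relying on, and it lets you drop the intermediate $B^*_0$ and the cobase-equicardinality step entirely: just extend $B\cap X$ to a maximal independent set of $M-S$ inside $F'$, observe it is a genuine base, and apply Lemma~3.3.1 once more (in $M$, with $B$ as the base) to bound the number of added elements by $|S|$.
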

\begin{proof}
First, we show that $M^{fin} - S = (M-S)^{fin}$. Suppose that $I \in \mathcal{L} (M^{fin} -S)$. Then $I \in \mathcal{L} (M^\mathrm{fin})$ with $I \cap S = \emptyset$. Then all finite subsets of $I$ are independent in $M$ and also independent in $(M-S)$. So $I \in \mathcal{L} ((M-S)^\mathrm{fin})$. So $ \mathcal{L} (M^\mathrm{fin} -S) \subset \mathcal {L} ((M-S)^{fin})$. Conversely, suppose that $I \in \mathcal{L} (M-S)^\mathrm{fin}$. Then all finite subsets of $I$  are independent in $M-S$ and thus independent in $M$. So $I \in \mathcal{L}(M^\mathrm{fin})$.  Also, $I \cap S = \emptyset$. So $I \in \mathcal{L} (M^\mathrm{fin} -S)$ and we conclude that $\mathcal{L} (M^{fin} -S) = \mathcal {L} ((M-S)^\mathrm{fin})$.

Now $M-S$ is clearly nearly finitary since $M$ is. Since $M$ is not $n$-nearly finitary by assumption, suppose that $B \in M$, $F \in M^\mathrm{fin}$ such that $B \subset F$ and $ | F \setminus B| \geq n+2 \cdot |S|$. Then $F \setminus S$ can be extended to a base $F_{M-S}$ of $(M-S)^{fin}$ by adding at most $|S|$ elements. Similarly, $B \setminus S$ can be extended to a base $B_{M-S}$ of $(M-S)$ by adding at most $|S|$ elements from $F_{M-S}$. So we still have $B_{M-S} \subset F_{M-S}$.  It is clear that  $|F_{M-S} \setminus B_{M-S}| \geq n$. So $M-S$ is not $n$-nearly finitary.
\end{proof}

We still can further extend this result using special contractions.

From before, we used the nearly finitary matroid union theorem of ~\cite{Union} and a hypothetical existing example to construct examples of matroids that are nearly finitary but not $n$-nearly finitary. We constructed such an example with infinitely many coloops. We can use this example with coloops and contractions to get more examples.

\begin{theorem}
Suppose $M$ is a nearly finitary matroid that is not $n$-nearly finitary for any natural number $n$ and has some coloops. Let $T$ be a set of coloops of $M$. Then the contraction $M/T$ is a nearly finitary matroid that is not $n$-nearly finitary for any natural number $n$.
\end{theorem}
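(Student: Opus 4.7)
The plan is to reduce contracting by coloops to a bookkeeping operation that preserves both the finitarization and the bijection between bases on the two sides. Once this is established, near-finitarity and its failure at every level $n$ transfer immediately from $M$ to $M/T$.

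First, I would observe that every coloop of $M$ is also a coloop of $M^{\mathrm{fin}}$. Since every base of $M^{\mathrm{fin}}$ contains some base of $M$ (as established earlier in the paper), any $t\in T$ lies in every base of $M$ and therefore in every base of $M^{\mathrm{fin}}$. Hence $T$ is a set of coloops of $M^{\mathrm{fin}}$ as well.

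Next, I would record the key equivalence: for any $I\subset E(M)\setminus T$, $I\in\mathcal{L}(M)$ if and only if $I\cup T\in\mathcal{L}(M)$, and the same for $M^{\mathrm{fin}}$. One direction is downward closure; for the other, axiom I4 extends $I$ to a base $B$ of $M$ containing $I$, and since $T$ is in every base, $I\cup T\subset B$, hence $I\cup T$ is independent. I would then use this to unfold the contraction. By the definition of contraction, $I\in\mathcal{L}(M/T)$ iff $I\cup I'\in\mathcal{L}(M)$ for every $I'\in\mathcal{L}(M|T)$; since elements of $T$ lie in no circuit of $M$, every subset of $T$ is independent, so $\mathcal{L}(M|T)=2^{T}$ and the condition collapses to $I\cup T\in\mathcal{L}(M)$, which by the equivalence reduces to $I\in\mathcal{L}(M)$. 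Thus $\mathcal{L}(M/T)=\mathcal{L}(M)\cap 2^{E\setminus T}$, and analogously $\mathcal{L}(M^{\mathrm{fin}}/T)=\mathcal{L}(M^{\mathrm{fin}})\cap 2^{E\setminus T}$. A direct computation of the finitarization of $M/T$ then gives $\mathcal{L}((M/T)^{\mathrm{fin}})=\mathcal{L}(M^{\mathrm{fin}})\cap 2^{E\setminus T}=\mathcal{L}(M^{\mathrm{fin}}/T)$. In particular, bases of $M/T$ are precisely $\{B\setminus T:B\text{ a base of }M\}$ and bases of $(M/T)^{\mathrm{fin}}$ are precisely $\{F\setminus T:F\text{ a base of }M^{\mathrm{fin}}\}$.

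Finally, I would transfer the two properties across this bijection. For near finitarity: given a base $B^{*}$ of $M/T$ contained in a base $F^{*}$ of $(M/T)^{\mathrm{fin}}$, lift to $B:=B^{*}\cup T$ and $F:=F^{*}\cup T$, which are bases of $M$ and $M^{\mathrm{fin}}$ with $B\subset F$; then $F\setminus B=F^{*}\setminus B^{*}$, which is finite by the near finitarity of $M$. For the failure of $n$-near finitarity: given $n\in\mathbb{N}$, use that $M$ is not $n$-nearly finitary to pick bases $B\subset F$ of $M$ and $M^{\mathrm{fin}}$ with $|F\setminus B|>n$; since $T\subset B\subset F$, the sets $B\setminus T\subset F\setminus T$ form a pair of bases for $M/T$ and $(M/T)^{\mathrm{fin}}$ with $|(F\setminus T)\setminus(B\setminus T)|=|F\setminus B|>n$.

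The main obstacle is the identification $(M/T)^{\mathrm{fin}}=M^{\mathrm{fin}}/T$, which requires carefully chasing the contraction definition under the coloop equivalence; once this is in place, the conclusion is a mechanical translation along the bijection $B\leftrightarrow B\setminus T$ between bases.
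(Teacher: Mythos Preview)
Your proof is correct and follows essentially the same route as the paper: establish that $M/T=(E\setminus T,\mathcal{L}(M)\cap 2^{E\setminus T})$, that $(M/T)^{\mathrm{fin}}=M^{\mathrm{fin}}/T$, and that bases on each side correspond via $B\mapsto B\setminus T$, then read off $|(F\setminus T)\setminus(B\setminus T)|=|F\setminus B|$ to transfer both near finitarity and its failure at level $n$. The only difference is that the paper simply asserts these preliminary identities as consequences of $T$ being a set of coloops, whereas you supply full justifications (the coloop equivalence $I\in\mathcal{L}(M)\Leftrightarrow I\cup T\in\mathcal{L}(M)$, the observation that $T$ remains a set of coloops in $M^{\mathrm{fin}}$, and the unfolding of the contraction definition).
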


\begin{proof}
We first want to show that $M/T$ is nearly finitary. Since $T$ is a set of coloops, $M/T = (E \setminus T, \mathcal{L} (M) \cap 2^{E \setminus T})$. Also, $(M/T)^\mathrm{fin}=(M^{fin})/T$. Bases of $M/T$ are of the form $B \setminus T$ where $B$ is a base in $M$. All bases of $M$ contain $T$ and thus all bases of $M^{fin}$ also contain $T$. Since $M$ is nearly finitary, for all pairs $(F,B)$ such that $F \in M^{fin}$, $B \in M$ with $B \subset F$, $|F \setminus B| < \infty$. Since $F$ and $B$ both contain $T$,
\begin{equation*}
|(F \setminus T) \setminus (B \setminus T)| = |F \setminus B| < \infty.
\end{equation*}

Since $M$ is not $n$-nearly finitary, there exists a base $B_n$ of $M$ contained in a base $F_n$ of $M^\mathrm{fin}$ such that $|F_n \setminus B_n| > n$. Again, $F_n$ and $B_n$ both contain $T$. So
\begin{equation*}
|(F_n \setminus T) \setminus (B_n \setminus T)| = |F_n \setminus B_n| > n.
\end{equation*}

So $M/T$ is not $n$-nearly finitary.
\end{proof}

The $[k]$ map in ~\cite{Union} was constructed to generate $k$-nearly finitary matroids from infinite rank finitary matroids. Earlier, we defined what it means for a matroid $M$ to be exactly $k$-nearly finitary. Recall that $M$ is exactly $k$-nearly finitary if it is $k$-nearly finitary but not $(k-1)$-nearly finitary. Here, we discuss a small refinement of what the authors of ~\cite{Union} meant. It is rather clear that if $M$ is a finitary matroid of infinite rank, then $M[k]$ is exactly $k$-nearly finitary. Also, if $M$ is exactly $j$-nearly finitary and of infinite rank, then $M[k]$ is exactly $(k+j)$-nearly finitary.

Suppose that $A=(E(A), \mathcal{A})$ is exactly $k$-nearly finitary and $B=(E(B), \mathcal{B})$ is exactly $j$-nearly finitary with $E(A)$ disjoint from $E(B)$. Then the matroid union $A \cup B$ is exactly $k+j$-nearly finitary. This $[k]$ map motivates us to consider possible $[-j]$ maps where $j$ is a natural number. Let $m= \max \{ 0, n+k-j \}$. Let $M$ be exactly $n$-nearly finitary and of infinite rank.

\begin{question}
Does there exist a $[-j ]$ map s.t. $((M[k])[-j])$ and $((M[-j])[k])$ are both exactly $m$-nearly finitary for all $k \in \mathbb{N}$?
\end{question}

\end{section}

\begin{section}{Unionable Matroids}
The Nearly Finitary Matroid Union theorem in ~\cite{Union} gives us a matroid union theorem for a pair of nearly finitary matroids. This motivates us to consider other pairs of matroids. We will show that the union of a finite rank matroid and an arbitrary matroid is again a matroid. We define a matroid $M$ to be $\textit{unionable}$ if $M \vee N$ is a matroid for any given matroid $N$.

\begin{theorem}
Finite rank matroids are unionable.
\end{theorem}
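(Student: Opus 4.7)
My plan is to verify that $F \vee M = (E, \mathcal{L}(F \vee M))$ satisfies the four independence axioms I1--I4, where $E = E(F) \cup E(M)$. Let $r = \mathrm{rank}(F) < \infty$, and partition $E$ into $E_F := E(F) \setminus E(M)$, $E_M := E(M) \setminus E(F)$, and $E_0 := E(F) \cap E(M)$. Since every $S \in \mathcal{L}(F)$ has $|S| \leq r$, the matroid $F$ is in particular finitary, and this finite rank is the main leverage. A preliminary observation is that every $Z \in \mathcal{L}(F \vee M)$ admits a \emph{disjoint} decomposition $Z = S \sqcup T$ with $S \in \mathcal{L}(F)$, $S \subset E(F)$, $T \in \mathcal{L}(M)$, $T \subset E(M)$: starting from any representation $Z = S_0 \cup T_0$, replace $S_0$ by $S_0 \setminus T_0$, which remains $F$-independent by heredity. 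Under such a decomposition the elements of $Z \cap E_F$ are forced into $S$, those of $Z \cap E_M$ into $T$, and those of $Z \cap E_0$ are distributed freely. Axioms I1 and I2 are then immediate: $\emptyset = \emptyset \sqcup \emptyset$, and if $A \subset Z = S \cup T$ then $A = (A \cap S) \cup (A \cap T)$.

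For axiom I4, given $A = S_A \sqcup T_A \in \mathcal{L}(F \vee M)$ with $A \subset X$, I construct a candidate maximal element of $\mathcal{L}(F \vee M) \cap [A, X]$ in two stages. First, apply I4 in $M$ to extend $T_A$ to $T$ maximal in $\mathcal{L}(M) \cap [T_A, X \setminus S_A]$. Second, apply I4 in $F$ to extend $S_A$ to $S$ maximal in $\mathcal{L}(F) \cap [S_A, (X \setminus T) \cap E(F)]$. Then $Z := S \sqcup T$ lies in $\mathcal{L}(F \vee M) \cap [A, X]$. To show $Z$ is maximal, suppose for contradiction that $e \in X \setminus Z$ and $Z + e = S' \sqcup T' \in \mathcal{L}(F \vee M)$ in some disjoint decomposition. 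A case analysis on whether $e \in E_F, E_M,$ or $E_0$ and whether $e$ lies in $S'$ or $T'$ combines axiom I3 in $F$ (to rotate elements of $S$ against $S'$) with axiom I3 in $M$ (to rotate elements of $T$ against $T'$). Because $|S|, |S'| \leq r$, the $F$-side rotation is a finite exchange sequence, and iterating produces either an augmentation of $S$ within $(X \setminus T) \cap E(F)$ or of $T$ within $X \setminus S_A$, either contradicting the maximality from the construction.

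Axiom I3 is handled by an essentially parallel exchange argument between a maximal $B = S_B \sqcup T_B$ and a non-maximal $A = S_A \sqcup T_A$, starting from a witness $c \notin A$ with $A + c \in \mathcal{L}(F \vee M)$ and finding the required $b \in B \setminus A$ by iterated swaps on the finite $F$-sides $S_A, S_B$ and the $M$-sides $T_A, T_B$. The main obstacle throughout is the non-uniqueness of the disjoint decomposition: elements of $E_0$ may be freely reassigned between the $F$-side and the $M$-side, so to rule out an augmentation $Z + e$ one must simultaneously rule out \emph{every} disjoint decomposition of $Z + e$, not only a single one. This is precisely where the finite rank of $F$ is essential: it uniformly bounds the size of the $F$-side by $r$, which renders the combinatorial search over decompositions finite and ensures that the iterated exchange arguments in $F$ and $M$ terminate.
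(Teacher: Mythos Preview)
Your plan diverges from the paper's proof in a way that creates real work you have not done. The paper sidesteps I1--I3 entirely by citing \cite{Union}, where it is shown that the union of \emph{any} family of matroids already satisfies I1--I3; only I4 can fail. So your attempt at I3 is unnecessary, and the sketch you give for it (``iterated swaps on the finite $F$-sides $S_A,S_B$ and the $M$-sides $T_A,T_B$'') does not stand on its own: axiom I3 in $M$ only lets you augment a non-maximal set from a \emph{base of $M$}, and you have no reason to believe either $T_A$ or $T_B$ is a base of $M$.

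For I4 the paper's argument is both simpler and complete. It extends only the $M$-side: take $B_N$ a base of $N\mid(X\cap E_N)$ containing $I_N$, and then show that \emph{every} $J\in\mathcal L(F\vee N)$ with $B_N\subset J\subset X$ satisfies $|J\setminus B_N|\le k$. The proof is a clean base-exchange contradiction inside $N\mid J$: write $J=J_F\cup J_N$, extend $J_N$ to a base $B_J$ of $N\mid J$, note $|J\setminus B_J|\le|J_F|\le k$, and exchange $B_J$ toward $B_N$ in at most $|B_N\setminus B_J|\le k$ steps to produce a base of $N\mid J$ strictly containing $B_N$. Hence a maximal element of $\mathcal L(F\vee N)\cap[I,X]$ is found by adding at most $k$ elements above $B_N$.

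Your two-stage construction (extend $T_A$ in $M$, then $S_A$ in $F$) and the claim that the resulting $Z$ is maximal may well be true, but the justification you offer has the same gap as your I3 argument: invoking ``axiom I3 in $F$'' and ``axiom I3 in $M$'' to rotate $S$ against $S'$ and $T$ against $T'$ is not licensed, because none of $S,S',T,T'$ is known to be a base of $F$ or of $M$. What you actually need is base exchange in a suitable \emph{restriction} (e.g.\ $M\mid(Z+e)$), exactly as the paper does in $N\mid J$, together with the observation that $T$ is already a base of $M$ restricted to $X\setminus S_A$. If you pursue that line you will essentially recover the paper's argument with extra bookkeeping; the two-stage extension buys nothing.
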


\begin{proof}
Let $F$ be a finite rank matroid and let $N$ be any matroid. From ~\cite{Union}, $F\vee N$ satisfies axioms $I1$ through $I3$. For axiom $I4$, we will use proof by contradiction. Let $I\subset X\subset E_{F}\cup E_{N}$ and suppose $I$ is an independent set in $F \vee N$. By definition, there exists $I_{F}\in\mathcal{L}_{F}$ and $I_{N}\in\mathcal{L}_{N}$ such that $I=I_{F}\cup I_{N}$. Then 
\begin{equation*}
I_{N}\subset X\cap E_{N}\subset E_{N}.
\end{equation*}
Since $N$ is a matroid, the set 
\begin{equation*}
\{I_{N}':I_{N}\subset I_{N}'\subset X\cap E_{N}\}
\end{equation*}
has some maximal element $B_{N}$. Clearly, $B_{N}$ is independent in $F\vee N$ and is an element of the set 
\begin{equation*}
S:=\{I'\in\mathcal{L}_{F\vee N}\colon I\subset I'\subset X\}.
\end{equation*}
Suppose there exists some independent set $J$ of $F\vee N$ such that $B_{N}\subset J\subset X$ and $|J\setminus B_{N}|>k$. We know that $J=J_{F}\cup J_{N}$ for some $J_{F}\in\mathcal{L}_{F}$ and $J_{N}\in\mathcal{L}_{N}$. Then $J\setminus J_{N}\subset J_{F}$ and we get $|J\setminus J_{N}|\leq|J_{F}|\leq k$. Since $N$ is a matroid, the restriction $N|J$ is also a matroid. By construction, $B_{N}$ is a base of $N|J$. $J_{N}$ is contained in some base $B_{J}$ of $N|J$. It is clear that $|J\setminus B_{J}|\leq k<|J\setminus B_{N}|$. By base exchange property, for all $b_{n}\in B_{N}\setminus B_{J}$, there is $b_{j}\in B_{J}\setminus B_{N}$ such that $B_{1}:=B_{J}+b_{n}-b_{j}\in\mathcal{L}_{N}$. Clearly, $|B_{N}\setminus B_{J}|\leq|J\setminus B_{J}|\leq k$. Iterating this process finitely many times will give us some base $B$ such that $|J\setminus B|=|J\setminus B_{J}|<|J\setminus B_{N}|$ and $B_{N}\subset B$. This shows that $B_{N}$ is a proper subset of the base $B$ of $N|J$. Contradiction.

We thus can find a maximal element of $S$ by adding at most $k$ elements to $B_{N}$.
\end{proof}
\end{section}

   \chapter[%
      Short Title of 4th Ch.
   ]{%
     Near Finitarization
   }%
   \label{ch:4thChapterLabel}
   \begin{section}{Near Finitarization}
Let $M=(E, \mathcal{L})$ and $M^\mathrm{{fin}}=(E, \mathcal{L}^{\mathrm{fin}})$ be a matroid and its finitarization respectively. Define
\begin{equation*}
\mathcal{L}^\mathrm{nfin} := \{ F \in \mathcal{L}^\mathrm{fin} \colon \exists S \in \mathcal{L} \text{ s.t. } S \subset F, |F \setminus S| < \infty  \}.
\end{equation*}
We define $M^{\mathrm{nfin}}:=(E,\mathcal{L}^{\mathrm{nfin}})$. We call this the near finitarization of $M$. 

\begin{theorem}
$M^{\mathrm{nfin}}$  is a finitary matroid if and only if $M$ is nearly finitary.
\end{theorem}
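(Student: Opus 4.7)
The plan is to establish both directions by comparing $\mathcal{L}^{\mathrm{nfin}}$ with $\mathcal{L}^{\mathrm{fin}}$: in both directions one proves the equality $\mathcal{L}^{\mathrm{nfin}} = \mathcal{L}^{\mathrm{fin}}$, so that $M^{\mathrm{nfin}} = M^{\mathrm{fin}}$ as independence systems, with the finitariness and matroid axioms inherited from $M^{\mathrm{fin}}$.

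For the direction $M$ nearly finitary $\Rightarrow M^{\mathrm{nfin}}$ a finitary matroid, given $F \in \mathcal{L}^{\mathrm{fin}}$ I would first extend $F$ to a base $F'$ of $M^{\mathrm{fin}}$ using axiom I4 of $M^{\mathrm{fin}}$. By the introduction's duality argument, $F'$ contains at least one base $B'$ of $M$; the nearly finitary hypothesis (in either form, which are equivalent for matroids by Theorem 3.1.2) then forces $|F' \setminus B'| < \infty$. Setting $S := B' \cap F$ gives $S \in \mathcal{L}$ with $S \subset F$ and $F \setminus S = F \setminus B' \subset F' \setminus B'$ finite, so $F \in \mathcal{L}^{\mathrm{nfin}}$.

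For the reverse direction, I would first observe that every finite $T \in \mathcal{L}^{\mathrm{fin}}$ lies in $\mathcal{L}$ (since finite subsets of $\mathcal{L}^{\mathrm{fin}}$-members are $M$-independent), so trivially in $\mathcal{L}^{\mathrm{nfin}}$ via $S := T$. Thus every finite subset of any $F \in \mathcal{L}^{\mathrm{fin}}$ lies in $\mathcal{L}^{\mathrm{nfin}}$, and the finitariness of $M^{\mathrm{nfin}}$ forces $F \in \mathcal{L}^{\mathrm{nfin}}$. Now take any base $F$ of $M^{\mathrm{fin}}$: membership $F \in \mathcal{L}^{\mathrm{nfin}}$ gives $S \in \mathcal{L}$ with $S \subset F$ and $|F \setminus S| < \infty$, and axiom I4 of $M$ applied with $A = S$, $X = F$ produces $B_0$ maximal in $\{T \in \mathcal{L} : S \subset T \subset F\}$ with $B_0 \subset F$ and $|F \setminus B_0| \le |F \setminus S| < \infty$.

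The hard part is showing that this $B_0$ is a base of $M$ and not merely a base of the restriction $M|F$. Here I would invoke the introduction's fact that every base of $M^{\mathrm{fin}}$ contains some base $B$ of $M$: if $B_0$ were non-maximal in $\mathcal{L}$, axiom I3 applied to the base $B$ and the non-maximal $B_0$ would produce an element $b \in B \setminus B_0 \subset F \setminus B_0$ with $B_0 + b \in \mathcal{L}$, contradicting the maximality of $B_0$ inside $F$. Hence $B_0$ is a base of $M$ with $B_0 \subset F$ and finite cosize, which is nearly finitary in the Union form; Theorem 3.1.2 then upgrades this to the author's form, completing the proof.
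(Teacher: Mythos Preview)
Your forward direction is essentially identical to the paper's: extend $F$ to a base of $M^{\mathrm{fin}}$, pick a base of $M$ inside it with finite complement, and intersect back with $F$ to get the witness; this yields $\mathcal{L}^{\mathrm{nfin}}=\mathcal{L}^{\mathrm{fin}}$ and hence $M^{\mathrm{nfin}}=M^{\mathrm{fin}}$.

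The reverse direction, however, is genuinely different from the paper's, though still correct. The paper argues by contrapositive: assuming $M$ is not nearly finitary, it picks $B\subset B_F$ with $|B_F\setminus B|=\infty$ and shows that $\{S\in\mathcal{L}^{\mathrm{nfin}}:B\subset S\subset B_F\}$ has no maximal element, so $M^{\mathrm{nfin}}$ fails axiom~I4 and is not a matroid at all. Your argument is direct: you exploit the \emph{finitariness} hypothesis to force $\mathcal{L}^{\mathrm{fin}}\subset\mathcal{L}^{\mathrm{nfin}}$, then from $F\in\mathcal{L}^{\mathrm{nfin}}$ you manufacture a base $B_0$ of $M$ inside $F$ with finite complement (your Step~6 argument via I3 is exactly what is needed to promote $B_0$ from a base of $M|F$ to a base of $M$), and close with Theorem~3.1.2. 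The two routes extract different strengthenings: the paper's contrapositive actually shows that $M^{\mathrm{nfin}}$ being \emph{any} matroid (finitary or not) already forces $M$ nearly finitary, whereas your argument shows that $M^{\mathrm{nfin}}$ being finitary as an independence system (matroid axioms not needed) suffices. Your opening sentence that ``in both directions one proves $\mathcal{L}^{\mathrm{nfin}}=\mathcal{L}^{\mathrm{fin}}$'' slightly undersells the reverse direction, since after establishing that equality you still need the nontrivial Steps~4--7 to reach ``nearly finitary''; but the argument itself is complete.
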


\begin{proof} Suppose $M$ is a nearly finitary matroid. Pick $F \in \mathcal{L}^\mathrm{fin}$. Then there exists a base $B_{F}$ in $M^\mathrm{fin}$ with $F \subset B_{F}$. Since $M$ is nearly finitary, there exists a base $B$ in $M$ with $B \subset B_{F}$ and $|B_{F} \setminus B| < \infty$. Then $F \setminus (B_{F} \setminus B)$ is a subset of $F$ in $\mathcal{L}$ that removes finitely many elements of $F$.  Thus, $F \in \mathcal{L}^\mathrm{nfin}$. So $\mathcal{L}^\mathrm{fin} \subset \mathcal{L}^\mathrm{nfin}$. By construction, $\mathcal{L}^\mathrm{nfin} \subset \mathcal{L}^\mathrm{fin}$. Thus, $M^{\mathrm{nfin}} = M^{\mathrm{fin}}$. So $M^{\mathrm{nfin}}$ is a finitary matroid. Conversely, suppose $M$ is a matroid that is not nearly finitary. Then there exists a base $B$ in $M$ and a base $B_{F}$ in $M^{\mathrm{fin}}$  such that $B \subset B_{F}$ and $|B_{F} \setminus B| = \infty$. Then the set $\{S \in \mathcal{L}^{\mathrm{nfin}} : B \subset S \subset B_{F}\}$  has no maximal element. This violates the fourth matroid axiom so $M^\mathrm{nfin}$ is not a matroid.
\end{proof}

\begin{conjecture}
If $M$ is not $k$-nearly finitary, then $M^\mathrm{nfin}$ is not a matroid.
\end{conjecture}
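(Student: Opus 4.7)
The plan is to split by cases on whether $M$ is nearly finitary. If $M$ fails to be nearly finitary, then Theorem 4.1.1 immediately gives that $M^\mathrm{nfin}$ is not a matroid, and we are done. The hard case is when $M$ is nearly finitary but fails to be $k$-nearly finitary for every $k$, so that for each $n$ there exist a base $B_n$ of $M$ and a base $F_n$ of $M^\mathrm{fin}$ with $B_n\subset F_n$ and $n<|F_n\setminus B_n|<\infty$.

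However, a close reading of the proof of Theorem 4.1.1 shows that if $M$ is nearly finitary then $\mathcal{L}^\mathrm{nfin}=\mathcal{L}^\mathrm{fin}$: given $F\in\mathcal{L}^\mathrm{fin}$, extend $F$ to a base $B_F$ of $M^\mathrm{fin}$, pick a base $B$ of $M$ inside $B_F$ with finite deficit, and take $T:=F\cap B\subset F$, which witnesses $F\in\mathcal{L}^\mathrm{nfin}$. Thus $M^\mathrm{nfin}=M^\mathrm{fin}$ is a matroid whenever $M$ is nearly finitary. The only way to discharge the hard case is therefore to prove that it is vacuous, i.e., that every nearly finitary matroid is $k$-nearly finitary. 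This is exactly Problem $P1$, so the conjecture is equivalent to $P1$ modulo Theorem 4.1.1.

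Accordingly, the proposal is really a proposal for $P1$: given witnesses $(B_n,F_n)$ with unbounded but finite deficits, produce a single witness with infinite deficit, contradicting near finitariness of $M$. A natural plan is to apply base exchange iteratively, or to take a transfinite amalgamation of the $(B_n,F_n)$, in order to glue the local deficits into one infinite deficit. The main obstacle is that the witnesses may interact on overlapping parts of $E$ in complicated ways, and there is no general tool that produces a limiting base of $M$ (as opposed to $M^\mathrm{fin}$) while preserving unboundedness of the deficit. For specific structured classes such as graphic or gammoid-type matroids, tools like Halin's theorem carry out exactly this amalgamation, so a realistic partial attack is to prove the conjecture within such a class; a fully general proof would settle $P1$ itself.
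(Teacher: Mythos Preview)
The statement you are attempting is labeled a \emph{Conjecture} in the paper, not a theorem; the paper does not prove it. What the paper does prove is Theorem~4.1.2, namely that Conjecture~4.1.1 is equivalent to problem~$P1$ (every nearly finitary matroid is $k$-nearly finitary for some $k$). Your analysis arrives at exactly the same reduction: the case where $M$ is not nearly finitary is handled by Theorem~4.1.1, and in the nearly finitary case you correctly observe (reproducing the forward direction of Theorem~4.1.1) that $M^{\mathrm{nfin}}=M^{\mathrm{fin}}$, so the conjecture collapses to $P1$. On this point your proposal and the paper agree completely.

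Where your proposal goes beyond the paper is in the final paragraph, where you sketch an amalgamation/base-exchange attack on $P1$ itself. You are candid that the obstacle is real: there is no general mechanism to pass from an unbounded family of finite-deficit witnesses $(B_n,F_n)$ to a single infinite-deficit witness, because limits of bases of $M$ need not be bases of $M$. This is precisely why $P1$ remains open in the paper, and your remark that Halin-type tools resolve it only in structured classes (graphic, gammoid, transversal) matches the state of the art the paper surveys. So your proposal is not a proof of the conjecture; it is a correct reduction to $P1$ together with an honest assessment that the remaining step is the open problem.
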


\begin{theorem}
The above conjecture is equivalent to the conjecture that every nearly finitary matroid is $k$-nearly finitary.
\end{theorem}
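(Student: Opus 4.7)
The plan is to exploit Theorem 4.1.1, which already pins down exactly when $M^{\mathrm{nfin}}$ is a (finitary) matroid: namely, precisely when $M$ is nearly finitary. With this biconditional in hand, the equivalence between Conjecture 4.1.1 and the statement ``every nearly finitary matroid is $k$-nearly finitary'' (call these (A) and (B) respectively) is essentially a propositional manipulation, and I would prove both directions by contradiction or case analysis.

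For the direction (A) $\Rightarrow$ (B), I would argue as follows. Assume (A) and suppose for contradiction that some nearly finitary matroid $M$ fails to be $k$-nearly finitary for every $k \in \mathbb{N}$. By (A), $M^{\mathrm{nfin}}$ is then not a matroid. But the forward direction of Theorem 4.1.1 says that for any nearly finitary matroid $M$, $M^{\mathrm{nfin}}$ is a finitary matroid. This is a contradiction, so (B) holds.

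For the direction (B) $\Rightarrow$ (A), I would split on whether $M$ is nearly finitary. Let $M$ be any matroid that is not $k$-nearly finitary (for any $k$). If $M$ were nearly finitary, then by hypothesis (B) it would have to be $k$-nearly finitary for some $k$, contradicting our assumption on $M$. So $M$ is not nearly finitary, and the converse direction of Theorem 4.1.1 immediately gives that $M^{\mathrm{nfin}}$ is not a matroid, which is exactly (A).

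There is no real obstacle here since the heavy lifting has already been done inside Theorem 4.1.1; the only thing to be careful about is the shorthand introduced earlier in the thesis, namely that ``$M$ is not $k$-nearly finitary'' is being used to mean ``$M$ is not $k$-nearly finitary for any $k \in \mathbb{N}$,'' so that the quantifier over $k$ in (A) and (B) matches. Once that convention is flagged at the start of the proof, both implications reduce to one-line applications of Theorem 4.1.1.
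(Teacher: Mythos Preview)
Your proposal is correct and follows essentially the same approach as the paper: both directions are short applications of Theorem~4.1.1, using the contrapositive relationship between ``nearly finitary'' and ``$M^{\mathrm{nfin}}$ is a matroid.'' The paper phrases the (A)$\Rightarrow$(B) direction as a direct proof of the contrapositive of (B) rather than as a contradiction, but the underlying logic is identical, and your remark about the ``not $k$-nearly finitary'' shorthand matches the convention stated earlier in the thesis.
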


\begin{proof} Let $M$ be a matroid. Suppose every nearly finitary matroid is $k$-nearly finitary for some $k \in \mathbb{N}$. If $M$ is not $k$-nearly finitary for all $k \in \mathbb{N}$, then $M$ is not nearly finitary and so $M^\mathrm{nfin}$ is not a matroid by Theorem 4.1.1. Conversely, suppose that $N^\mathrm{nfin}$ is not a matroid whenever $N$ is a matroid that is not $k$-nearly finitary for all $k \in \mathbb{N}$. Suppose $M$ is not $k$-nearly finitary for all $k \in \mathbb{N}$. Then $M^\mathrm{nfin}$ is not a matroid by assumption. Applying Theorem 4.1.1 shows that $M$ is not nearly finitary.
\end{proof}

Conjecture 4.1.1 may give us a new way to study problem $P1$.

In ~\cite{Dray}, the authors prove a longstanding conjecture by Andreae on graphs. In ~\cite{Halindray}, Halin himself extended his theorem in the following way. If, for all $k \in \mathbb{N}$, a graph $G$ has a set of $k$ vertex-disjoint double rays, then it has some set of infinitely many vertex-disjoint double rays. Andreae extended Halin's theorem for edge disjoint rays and further conjectured that an analogous result would hold for edge-disjoint double rays.

The motivation for studying problem $P1$ comes from trying to generalize Halin's theorem. Here, we explore some other possible ways to generalize Halin's theorem.

Now double rays are infinite circuits of algebraic cycle matroids. Thus, we further conjecture the following. 
\begin{conjecture}
If, for all $k \in \mathbb{N}$, a matroid $M$ has some set of $k$ disjoint infinite circuits, then $M$ has some set of infinitely many disjoint infinite circuits.
\end{conjecture}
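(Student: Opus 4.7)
The plan is to proceed by contradiction via Zorn's lemma. Let $\mathcal{P}$ denote the poset of families of pairwise disjoint infinite circuits of $M$, ordered by inclusion. By hypothesis (applied with $k = 1$) the poset $\mathcal{P}$ is nonempty, and the union of any chain in $\mathcal{P}$ is again a family of pairwise disjoint infinite circuits, so Zorn's lemma yields a maximal element $\mathcal{C}$. If $\mathcal{C}$ is infinite we are done, so suppose for contradiction $\mathcal{C} = \{C_1, \dots, C_m\}$ is finite, and set $U := C_1 \cup \cdots \cup C_m$.

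By maximality of $\mathcal{C}$, the restriction $M - U$ contains no infinite circuit: any such circuit would be an infinite circuit of $M$ disjoint from every $C_j$ and could therefore be adjoined to $\mathcal{C}$. The task is thus to derive a contradiction between the absence of infinite circuits in $M - U$ and the hypothesis that $M$ has $k$ disjoint infinite circuits for every $k$. A first attempt is to apply the hypothesis with $k = m+1$, obtaining a family $\{D_1, \dots, D_{m+1}\}$ of pairwise disjoint infinite circuits: by maximality each $D_i$ meets $U$, and since the $D_i$ are pairwise disjoint, the sets $D_i \cap U$ are pairwise disjoint nonempty subsets of $U$. If $U$ were finite, a pigeonhole on $\{C_1, \dots, C_m\}$ would close the argument; but each $C_j$ is itself infinite, so this cardinality attack fails, and enlarging $k$ does not obviously help either.

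The main obstacle is therefore to show that, for a finite union $U$ of pairwise disjoint infinite circuits of $M$, either $M - U$ contains some infinite circuit or else $M$ admits only boundedly many pairwise disjoint infinite circuits. This is in effect a matroidal analogue of Andreae's theorem for edge-disjoint double rays, proven for graphs in ~\cite{Dray}. In the algebraic and topological cycle matroid cases treated in ~\cite{Inter}, infinite circuits are double rays in an underlying graph and Andreae's theorem can be invoked directly; for general matroids no such underlying graph exists. Two natural avenues I would explore are (a) passing to the contraction $M / U$ rather than the restriction $M - U$, and analyzing how infinite circuits transform under contraction, so as to set up an inductive reduction in $m$; and (b) attempting to abstract the inductive ray-rerouting argument of ~\cite{Dray} into a purely matroid-theoretic setting, perhaps via thin sums representations as in Chapter 6. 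In either case a genuinely new idea appears to be required, which is presumably why the statement is recorded as a conjecture rather than a theorem.
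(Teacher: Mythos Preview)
The statement you are attempting is recorded in the paper as Conjecture~4.1.2, not as a theorem; the paper offers no proof, only the remark that it would extend the edge-disjoint double ray result of~\cite{Dray} to general matroids. There is therefore nothing in the paper to compare your argument against.

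As for the argument itself: the Zorn step is sound and the reduction to ``$M - U$ has no infinite circuit for some finite union $U$ of disjoint infinite circuits'' is correct, but you have also correctly located the genuine obstruction. The pigeonhole attempt with $k = m+1$ fails precisely because $U$ is infinite, and neither of your proposed avenues (a) or (b) is known to close the gap. In particular, passing to $M/U$ does not obviously help: contraction can create new finite circuits from old infinite ones and can destroy infinite circuits, so the bookkeeping of ``infinite circuit'' is not stable under contraction in the way it is for double rays in graphs. Your assessment that a new idea is needed, and that this is why the statement is labelled a conjecture, is accurate.
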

The ground set of an algebraic cycle matroid of a graph $G$ is the edge set of $G$. So this conjecture extends the edge-disjoint double ray result recently found. 

We also present the following conjecture related to Conjecture 4.1.2.
\begin{conjecture}
If, for all $k \in \mathbb{N}$, a matroid $M$ has some set of $k$ disjoint infinite circuits whose union is independent in $M^\mathrm{fin}$, then it has some set of infinitely many disjoint infinite circuits whose union is independent in $M^\mathrm{fin}$.
\end{conjecture}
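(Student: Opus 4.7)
The plan is to approach the conjecture via Zorn's lemma applied to the poset $\mathcal{P}$ whose elements are families $\mathcal{F}$ of pairwise disjoint infinite circuits of $M$ with $U(\mathcal{F}) := \bigcup_{C \in \mathcal{F}} C$ lying in $\mathcal{L}^\mathrm{fin}$, ordered by inclusion. The empty family lies in $\mathcal{P}$, and every chain $\{\mathcal{F}_\alpha\}$ in $\mathcal{P}$ has an upper bound $\bigcup_\alpha \mathcal{F}_\alpha$: pairwise disjointness persists because any two circuits in the union already sit together in some $\mathcal{F}_\alpha$, and every finite subset of $\bigcup_\alpha U(\mathcal{F}_\alpha)$ is contained in some $U(\mathcal{F}_\alpha) \in \mathcal{L}^\mathrm{fin}$, hence is $M$-independent. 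Zorn then delivers a maximal element $\mathcal{F}^* \in \mathcal{P}$, and it suffices to prove that $|\mathcal{F}^*| \geq \aleph_0$.

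Assume for contradiction that $|\mathcal{F}^*| = n < \infty$, and set $U^* := U(\mathcal{F}^*)$. Applying the hypothesis with $k = n+1$ yields a family $\mathcal{D} = \{D_1, \dots, D_{n+1}\}$ of pairwise disjoint infinite circuits of $M$ whose union is independent in $M^\mathrm{fin}$. The objective is to produce a single infinite circuit $C$ of $M$ that is disjoint from $U^*$ and such that $U^* \cup C \in \mathcal{L}^\mathrm{fin}$; adjoining such a $C$ to $\mathcal{F}^*$ would contradict maximality. The natural strategy is iterated circuit elimination: if some $D_j$ is already disjoint from $U^*$ then one need only verify $M^\mathrm{fin}$-independence of $U^* \cup D_j$, while otherwise, given $x \in D_j \cap C_i$ with $C_i \in \mathcal{F}^*$, the circuit elimination axiom supplies an infinite circuit inside $(D_j \cup C_i) - x$ that trades $\{x\}$ for some subset of $C_i \setminus D_j$.

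The main obstacle is controlling the elimination procedure to convergence. Three difficulties stand out. First, the intersection $D_j \cap U^*$ may be infinite, so the elimination must be transfinite and one needs a compactness argument guaranteeing that the limit is still a circuit of $M$; the axiom system of \cite{Axm} does not supply such a principle for free. Second, each elimination step injects elements of the ambient $C_i$, so successive eliminations can oscillate without ever exiting $U^*$. Third, even after obtaining an infinite circuit $C$ disjoint from $U^*$, one must exclude finite circuits of $M$ that mix $C$ with finitely many elements of $U^*$, since $C \cap U^* = \emptyset$ alone does not imply $U^* \cup C \in \mathcal{L}^\mathrm{fin}$. A plausible route around these obstacles is to invoke the hypothesis at levels $k \gg n$ and use pigeonhole to select a $D_j$ whose intersection with $U^*$ meets only a small number of the $C_i \in \mathcal{F}^*$; the $M^\mathrm{fin}$-independence of $\bigcup \mathcal{D}$, which is precisely the strengthening of Conjecture 4.1.2 present in the current statement, should then rule out the finite dependencies in $U^* \cup C$ needed in the last stage, but making this precise is where the real work lies.
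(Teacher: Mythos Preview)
The paper does not prove this statement; it is stated explicitly as a conjecture (Conjecture~4.1.3) and left open, offered only as a matroidal generalisation of Andreae's vertex-disjoint double-ray theorem. So there is no ``paper's own proof'' to compare against, and you are in effect proposing an attack on an open problem.

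Your Zorn step is fine and your diagnosis of the difficulties is honest, but the obstacles you list are not merely technical---they are exactly where the argument breaks. Two concrete failures: (i) Circuit elimination applied to an infinite circuit $D_j$ and some $C_i\in\mathcal{F}^*$ produces a circuit inside $(D_j\cup C_i)-x$, but nothing forces that circuit to be infinite; since $D_j$ and $C_i$ are each $M^{\mathrm{fin}}$-independent but their union need not be, a finite circuit sitting across $D_j$ and $C_i$ is entirely possible, and once one appears your iteration halts with an object of the wrong type. (ii) Even granting an infinite circuit $C$ disjoint from $U^*$, the extension $\mathcal{F}^*\cup\{C\}$ lies in $\mathcal{P}$ only if $U^*\cup C\in\mathcal{L}^{\mathrm{fin}}$. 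Disjointness gives you nothing here: a finite $M$-circuit can straddle $U^*$ and $C$, and your pigeonhole idea does not address this, because the $M^{\mathrm{fin}}$-independence hypothesis is on $\bigcup\mathcal{D}$, not on $U^*\cup D_j$. In short, maximality of $\mathcal{F}^*$ in $\mathcal{P}$ encodes two simultaneous constraints (disjointness \emph{and} joint $M^{\mathrm{fin}}$-independence), and the hypothesis supplies families satisfying both constraints only among themselves, not relative to the fixed $\mathcal{F}^*$. Bridging that gap is the whole problem, and nothing in the proposal does so.
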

Now any union of vertex disjoint double rays of a graph $G$ is independent in the finitarization of the algebraic cycle matroid of $G$. So this conjecture extends the vertex-disjoint double ray result found in 1981 by Andreae.
\end{section}

\begin{section}{Independence System Example}
In the introduction, we showed that the notion of finitarization can also be extended to independence systems that are not matroids. We also defined what it means for an independence system to be nearly finitary and $k$-nearly finitary. We will now show by an explicit construction that not every nearly finitary independence system is $k$-nearly finitary.
\begin{theorem}
There exists a nearly finitary independence system that is not $k$-nearly finitary.
\end{theorem}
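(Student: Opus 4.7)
The plan is to construct $I$ as a ``disjoint union'' of the matroids $M_n$ from Section 3.2, but with the additional constraint that every independent set must be confined to a single component. For each $n\ge 1$, take a disjoint copy $E_n$ of the ground set of the algebraic cycle matroid $M_n$ of $n$ parallel one-way infinite ladders, and set $E := \bigsqcup_{n\ge 1}E_n$. Define
\[
\mathcal{L} := \{S\subset E : \exists n \text{ with } S\subset E_n \text{ and } S\in\mathcal{L}(M_n)\}.
\]
Then $I:=(E,\mathcal{L})$ is an independence system: $\emptyset\in\mathcal{L}$, and downward closure of each $\mathcal{L}(M_n)$ gives axiom I2.

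The main technical step (and essentially the only real obstacle) is to pin down the bases of $I$ and of $I^{\mathrm{fin}}$. The crux is the observation that for $a\in E_n$ and $b\in E_m$ with $n\neq m$, the pair $\{a,b\}$ is finite but is not contained in any single $E_k$, hence $\{a,b\}\notin\mathcal{L}$. Consequently any $S\in\mathcal{L}(I^{\mathrm{fin}})$ must already sit inside a single $E_n$, since otherwise it would contain a cross-component pair as a finite subset. Within a fixed $E_n$, a set is in $I^{\mathrm{fin}}$ iff every finite subset lies in $\mathcal{L}(M_n)$, i.e.\ iff it is independent in $M_n^{\mathrm{fin}}$. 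Moreover, any such set remains maximal in $I^{\mathrm{fin}}$, because no element of $E_m$ ($m\neq n$) can be appended without leaving $\mathcal{L}(I^{\mathrm{fin}})$. Hence the bases of $I^{\mathrm{fin}}$ are precisely the bases of the various $M_n^{\mathrm{fin}}$, and an entirely parallel argument identifies the bases of $I$ with the bases of the $M_n$.

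With these identifications in place the theorem follows quickly. For any $B\subset F$ with $B$ a base of $I$ and $F$ a base of $I^{\mathrm{fin}}$, both live in a common $E_n$, so $|F\setminus B|\le n$ by Theorem 3.2.1, and therefore $I$ is nearly finitary. Conversely, since $\mathrm{Spec}(M_n)=\{0,1,\dots,n\}$, for each $n$ there is a concrete pair inside $E_n$ witnessing $|F\setminus B|=n$: take $B$ to be the algebraic spanning forest $A^n$ of $M_n$ and obtain $F$ by adding one middle edge to each of the $n$ connected components. These are bases of $I$ and of $I^{\mathrm{fin}}$ respectively, so no single $k$ can bound $|F\setminus B|$ over all such pairs, and $I$ is nearly finitary but not $k$-nearly finitary for any $k$.
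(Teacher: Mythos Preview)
Your construction is correct and the argument goes through: the key point---that any element of $\mathcal{L}(I^{\mathrm{fin}})$ is confined to a single $E_n$ because cross-component pairs are already dependent---pins down the bases of $I$ and $I^{\mathrm{fin}}$ exactly as you claim, and the spectrum computation $\mathrm{Spec}(M_n)=\{0,\dots,n\}$ from Section~3.2 then finishes both directions.

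That said, your route is quite different from the paper's. The paper gives a much more elementary witness: on the ground set $\mathbb{N}$, take
\[
\mathcal{T}=\{\{1\},\{2,3\},\{4,5,6\},\dots\},\qquad
\mathcal{L}=\{S\subset\mathbb{N}:S\subset\mathbb{N}\setminus T\text{ for some }T\in\mathcal{T}\}.
\]
Here every finite subset of $\mathbb{N}$ is independent, so $N^{\mathrm{fin}}$ has $\mathbb{N}$ as its unique base; bases of $N$ are the sets $\mathbb{N}\setminus T$, each missing only finitely many points but with $|T|$ unbounded. No graph-theoretic input is needed at all. What your approach buys is a conceptual link back to Chapter~3: it shows that the phenomenon can be obtained by ``stacking'' genuine matroids $M_n$ with growing spectrum under a single-component constraint, and it makes transparent why the resulting system fails I3 (a non-maximal independent set in $E_1$ cannot be augmented from a base living in $E_2$). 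The paper's example, by contrast, is self-contained and minimal, and it also serves a secondary purpose there---illustrating that the paper's definition of $k$-nearly finitary diverges from the one in \cite{Union} for non-matroid independence systems.
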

\begin{proof}
 Consider the set
\begin{equation*}
\mathcal{T} :=\{\{1\},\{2,3\},\{4,5,6\},...\}.
\end{equation*}
 Consider the following:
\begin{equation*}
\mathcal{L} := \{ S \colon S \subset \mathbb{N} \setminus T \text{ with } T \in \mathcal{T} \}.
\end{equation*}
Then $N:=(\mathbb{N}, \mathcal{L})$ gives us our desired independence system. It is clear that the empty set is independent and that subsets of independent sets are independent. It is nearly finitary since $|F \setminus B| < \infty$ whenever $F$ is a base of $N^\mathrm{fin}$ that contains $B$ and $B$ is a base of $N$. In fact, the only base of $N^\mathrm{fin}$ is $\mathbb{N}$. Every base of $N$ is missing only finitely many elements from $\mathbb{N}$. However, there is no finite bound on how many elements a base of $N$ can miss so $N$ is not $k$-nearly finitary for any $k \in \mathbb{N}$.
\end{proof}
However, $N$ is not a matroid. To see why, consider $S_1 := \mathbb{N} \setminus \{ 1, 2 \}$ and $B := \mathbb{N} \setminus \{ 2 ,3 \}$. $B$ is a maximal independent set in $N$ and $S_1$ is not.  $2$ is not in $B$ and $S_1 + 1$ is not independent. Thus, axiom $I3$ is not satisfied. However, every finite subset of $\mathbb{N}$ is independent so it is clear that $N$ satisfies axiom $F3$. $N$ also satisfies axiom $I4$. To see why, let $X$ be some subset of $\mathbb{N}$. Let $I$ be an independent set of $N$ contained within $X$. Since $I$ is independent, there is some set $T \in \mathcal{T}$ such that $I \subset \mathbb{N} \setminus T$. If $X$ is independent in $N$, then $X$ is the largest subset of $X$ that is independent in $N$. If $X$ is not independent in $N$, then $X \setminus T$ is a subset of $X$ independent in $N$ and containing $I$. Since $T$ is finite, there must be some maximal independent subset of $X$ which contains $X \setminus T$. So $N$ seems to almost be a matroid.

Since all finite subsets of $\mathbb{N}$ are independent in $N$, $N^{\mathrm{fin}}=(\mathbb{N},2^{\mathbb{N}})$. The only base of $N^{\mathrm{fin}}$ is $\mathbb{N}$. A set $S \subset \mathbb{N}$ is a base of $N$ if and only if there is some $T \in \mathcal{T}$ with $S= \mathbb{N} \setminus T$. Let $S_0:=\mathbb{N} \setminus T_0$ be some base in $N$ with $T_0 \in \mathcal{T}$. Since $|T|< \infty$ for all $T \in \mathcal{T}$, $|\mathbb{N} \setminus S_0| = |T_0|<\infty$. So $N$ is a nearly finitary independence system. However, for all $k \in \mathbb{N}$, there is some $T_k \in \mathcal{T}$ with $|T_k| > k$. So we can get a base $S_k := \mathbb{N} \setminus T_k$ with $|\mathbb{N} \setminus S_k| = |T_k| > k$. So $N$ is not a $k$-nearly finitary independence system with our definitions. We have thus shown that not every nearly finitary independence system is $k$-nearly finitary. We remark that $N$ is $1$-nearly finitary under the definition of $1$-nearly finitary in ~\cite{Union}. This alternate definition can be found on page 6 of our introduction. So although our definitions of nearly finitary and $k$-nearly finitary agree with the definitions in ~\cite{Union} for matroids, they do not agree for more general independence systems. We use our definitions of nearly finitary and $k$-nearly finitary over the ones in ~\cite{Union} because this choice makes our example independence system $N$ interesting.

The previous example already shows that our definition of $k$-nearly finitary is different from the definition in ~\cite{Union}. Our next example will show that our definition of nearly finitary is different from the one in ~\cite{Union}. Consider the following set:
\begin{equation*}
\mathcal{R} := \{\mathbb{N}, \{-1\}, \{-2\}, \{-3\}, ... \}.
\end{equation*}
 Consider the following:
\begin{equation*}
\mathcal{L}(\mathcal{R}) := \{ S \colon S \subset \mathbb{R} \setminus R \text{ with } R \in \mathcal{R} \}.
\end{equation*}
Then $I:=(\mathbb{R}, \mathcal{L}(\mathcal{R}))$ is an independence system. Every finite subset of $\mathbb{R}$ is independent in $I$ so $I^{\mathrm{fin}}=(\mathbb{R},2^{\mathbb{R}})$. $I$ satisfies the definition of $1$-nearly finitary in ~\cite{Union} since the only base $F:=\mathbb{R}$ of $I^{\mathrm{fin}}$ contains a base $B:=\mathbb{R}\setminus \{-1\} $ in $I$ such that $|F \setminus B|=|\{ -1 \}|=1$. So $I$ is nearly finitary in the sense of ~\cite{Union}. However, $B_0:=\mathbb{R} \setminus \mathbb{N}$ is a base in $I$ with $\mathbb{R} \setminus B_0 =\mathbb{N}$. Since $\mathbb{N}$ does not have finite cardinality, $I$ is not nearly finitary in our sense.

Finally, this does not contradict the matroid equivalence of our definitions of nearly finitary and $k$-nearly finitary with the definitions in ~\cite{Union}. To see why $I$ is not a matroid consider the independent set $S:= \mathbb{R} \setminus \{-1,1\}$ and the base $B_1 := \mathbb{R} \setminus \mathbb{N}$. No element of $B_1 \setminus S$ can be added to $S$ to get a new independent set in $I$. So $I$ does not satisfy axiom $I3$ and is not a matroid.
\end{section}

   \chapter[%
      Short Title of 5th Ch.
   ]{%
     $\Psi$-Matroids and Related Constructions
   }%
   \label{ch:5thChapterLabel}
   \begin{section}{$\Psi$-Matroids}
Bowler and Carmesin studied $\Psi$-matroids in ~\cite{Games}. Every graph $G$ has an End compactification $|G|$ obtained by adding compactification points for each end of a graph. We will define $|G|$ later. We define a graph $G$ to be locally finite if each vertex in $G$ has finitely many adjacent edges. When $G$ is an infinite, locally finite, and connected graph, $|G|$ can be given a compact topology. So this End compactification can often give us compactifications in the topological sense. Let $\Omega (G)$ be the set of ends of a graph. Let $\Psi$ be a subset of $\Omega (G)$. Then you can partially compactify $G$ by adding compactification points associated to each end in $\Psi$. This new structure will be denoted by $G_{\Psi}$.

We review the End compactification of a graph as shown in ~\cite{Games}. Let $G$ be a graph and let $\Omega (G)$ be its set of ends. Let $\sqcup$ be the disjoint union binary operation on sets. Let $d$ be the distance function on $V(G) \sqcup (0,1) \times E(G)$. Here, $V(G) \sqcup (0,1) \times E(G)$ is the ground set of the simplicial 1-complex formed from the vertices and edges of $G$. We define a topology $V_t$ on the set $V(G) \sqcup \Omega(G) \sqcup (0,1) \times E(G)$ by taking basic open neighbourhoods as follows:
\begin{itemize}
\item 1: For $v \in  V(G)$, the basic open neighborhoods of $v$ are $\epsilon$-balls $B_{\epsilon} (v) =\{ x|d(v,x)< \epsilon \}$ for $ \epsilon \leq 1$.
\item 2: For $(x, e) \in (0,1) \times E$ we say $(x,e)$ is an interior point of $e$, and take the basic open neighborhoods to be $\epsilon$-balls about $(x,e)$ with $ \epsilon \leq \min \{ x,1-x \}$.
\item 3: For $ \omega \in \Omega (G)$, the basic open neighborhoods of $\omega$ will be parametrized by the finite subsets $S$ of $V(G)$. Given such a subset, we let $C(S, \omega)$ be the unique component of $G \setminus S$ that contains a ray from $\omega$, and let $\hat{C} (S, \omega)$ be the set of all vertices and inner points of edges contained in or incident with $C(S, \omega)$, and of all ends represented by a ray in $C(S, \omega)$. We take basic open neighborhoods of $\omega$ to be the sets $\hat{C} (S, \omega)$.
\end{itemize}

The topological space obtained in this way is $|G|$ mentioned earlier.

For any set $\Psi$ of ends in $G$, we set $\Psi^C = \Omega (G) \setminus \Psi$ and $|G|_{\Psi}=|G| \setminus \Psi^{C}$.

Bowler and Carmesin show that the topological space $|G|_{\Psi}$ derived from the graph $G$ almost fits the notion of a graph-like space that appears in ~\cite{graphlike} in the following sense.
\begin{definition}
(~\cite{Games} Definition 3.2)
An almost graph-like space $G$ is a topological space (also denoted $G$) together with a vertex set $V=V(G)$, an edge set $E=E(G)$ and for each $e \in E$ a continuous map $\ell_e \colon [0,1] \rightarrow G$ such that:
\begin{itemize}
\item 1: The underlying set of $G$ is $V \sqcup (0,1) \times E$.
\item 2: For any $x \in (0,1)$ we have $\ell_e (x)=(x,e)$.
\item 3: $\ell_e (0)$ and $\ell_e(1)$ are vertices (called the endvertices of $e$).
\item 4: $\ell_e |_{(0,1)}$ is an open map.
\end{itemize}
\end{definition}

They then give $|G|_{\Psi}$ the structure of an almost graph-like space, with edge set $E(G)$ and vertex set $V(G) \cup \Psi$. By definition, such an almost graph-like space is a $\textit{graph-like space}$ if in addition for any $v, v' \in V$ with $v \neq v'$, there are disjoint open subsets $U$, $U'$ of $G$ partitioning $V(G)$ and with $v \in U$ and $v' \in U'$. This ensures that $V(G)$, considered as a subspace of $G$, is totally disconnected, and that $G$ is Hausdorff. Bowler and Carmesin go on to define an equivalence relation $\simeq$ such that $|G|_{\Psi} / \simeq$ defines a graph like space. Denote $|G|_{\Psi} / \simeq$ as $\tilde{G}_{\Psi}$. They then consider topological circles of $\tilde{G}_{\Psi}$ as circuits of a $\Psi$-system $M(G,\Psi)=(E(G), \mathcal{L})$ where independent sets of $\mathcal{L}$ are subsets of $E(G)$ that do not contain any of these topological circles from $\tilde{G}_{\Psi}$.  Then they consider the question about when this $M(G, \Psi)$ is a matroid. When $M(G, \Psi)$ is a matroid, we say that it is a $\Psi$-matroid. Bowler and Carmesin prove that if $G$ is a locally finite graph, and $\Psi$ a Borel set of ends of $G$, then $M(G, \Psi)$ is a matroid. Recall that a Borel set of a topological space is any set that can be formed from open sets through the operations of countable union, countable intersection, and relative complement.

\begin{definition}
$v \simeq v'$ if for every two disjoint open subsets $U$, $U'$ of $G$ partitioning $V(G)$, either $v,v' \in U$ or $v,v' \in U'$. In other words, the two vertices are always in the same part of this kind of bipartition of $V(G)$.
\end{definition}

Under this definition, two distinct vertices of a finite graph $G$ are not equivalent. Consider the topological space $|G|$ and let $v$ and $v'$ be two distinct vertices of $|G|$. Then the sets $B_{0.5} (v)$ and $\bigcup_{x \in V(G) \setminus {v}} B_{0.5} (x)$ form a bipartition of $V(G)$ and are two disjoint open subsets of $|G|$ separating $v$ and $v'$.

To see an example of a graph-like space $|G|$ and a pair $(v,v')$ of distinct vertices such that $v \simeq v'$, consider the Bean graph as shown on page \pageref{BeanGraph} of this thesis. For the reader's convenience, we also include a picture of the Bean graph in Figure 5.1. See ~\cite{InfGraph} to learn more about the Bean graph.

\begin{figure}
\includegraphics{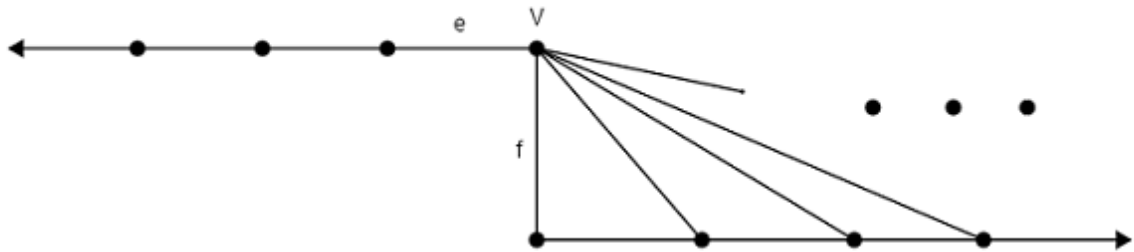}
\caption{Bean Graph}
\label{BeanGraph2}
\end{figure}

Here, the vertex $V$ 'dominates' the end $\omega$ on the right. The following definition is from ~\cite{Games}.

\begin{definition}
We say a vertex $v$ dominates a ray $R$ if there are infinitely many paths from $v$ to $R$. We say a vertex $v$ dominates an end $\omega$ if it dominates some ray (or equivalently, all rays) belonging to $\omega$.
\end{definition}

Bowler and Carmesin have shown that if $v$ dominates some end $\omega$, then $v$ and $\omega$ are equivalent as vertices of $|G|_{\Psi}$. To see why this is true, suppose that we bipartition the vertices of $|G|_{\Psi}$ with two disjoint open sets. Any open set that contains $\omega$ contains all but finitely many of the infinitely many paths from $v$ to a ray $R$ of $\omega$. Thus, an open set containing $\omega$ must also contain $v$. Following Definition 5.1.2, $\omega$ and $v$ must be equivalent.

\begin{lemma}
Let $M_{1}=(E,\mathcal{L}_{1})$ and $M_{2}=(E,\mathcal{L}_{2})$ be matroids on a common ground set. If $\mathcal{L}_{1}\subset\mathcal{L}_{2}$, then every base $B_{1}$ of $M_{1}$ extends to a base $B_{2}$ of $M_{2}$. Moreover, every base $B_{2}$ of $M_{2}$ contains a base $B_{1}$ of $M_{1}$.
\end{lemma}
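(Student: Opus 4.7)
My plan for the first part is a direct application of axiom I4 to $M_2$. Since $\mathcal{L}_1 \subset \mathcal{L}_2$, the base $B_1$ is already independent in $M_2$, so invoking I4 in $M_2$ with $A = B_1$ and $X = E$ produces a maximal element $B_2$ of $\{S \in \mathcal{L}_2 : B_1 \subset S\}$. Any independent set of $M_2$ strictly containing $B_2$ would also strictly contain $B_1$, contradicting this maximality, so $B_2$ is in fact a base of $M_2$ extending $B_1$.

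For the second part, the natural first attempt is to mimic the duality argument the excerpt used in Chapter 1 for the pair $(M, M^{\mathrm{fin}})$. Given a base $B_2$ of $M_2$, its complement $B_2^* := E \setminus B_2$ is a base of $M_2^*$, and if one could establish $\mathcal{L}_2^* \subset \mathcal{L}_1^*$, then the first part applied to the dual pair $(M_2^*, M_1^*)$ would extend $B_2^*$ to a base $B_1^*$ of $M_1^*$, and $B_1 := E \setminus B_1^*$ would be a base of $M_1$ contained in $B_2$. However, unwinding the desired inclusion $\mathcal{L}_2^* \subset \mathcal{L}_1^*$ shows it is essentially equivalent to the statement we want, so this route is circular without additional input.

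I would therefore try a direct approach. Apply I4 in $M_1$ with $A = \emptyset$ and $X = B_2$ to produce a maximal $B_1 \in \mathcal{L}_1$ with $B_1 \subset B_2$, and then argue by contradiction that $B_1$ is a base of $M_1$. If not, I4 applied in $M_1$ yields a base of $M_1$, and axiom I3 in $M_1$ against that base furnishes an $e \notin B_1$ with $B_1 + e \in \mathcal{L}_1 \subset \mathcal{L}_2$. Maximality of $B_1$ inside $B_2$ forces $e \notin B_2$, whence $B_2 + e \notin \mathcal{L}_2$ contains a circuit $C$ of $M_2$ passing through $e$. Since $B_1 + e$ is $M_2$-independent, $C$ must meet $B_2 \setminus B_1$ in some element $c$, and the goal becomes to use the circuit structure (or iterated applications of the base-exchange axiom B2 comparing $B_2$ with an extension of $B_1 + e$ to a base of $M_2$) to produce some element of $B_2 \setminus B_1$ whose addition keeps $B_1$ in $\mathcal{L}_1$, contradicting the maximality of $B_1$.

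The hard part is precisely this last step: transferring the $M_1$-independent extension of $B_1$ from $e$ to an element of $B_2$ while remaining inside $\mathcal{L}_1$, rather than merely inside the larger collection $\mathcal{L}_2$. In the excerpt's analogous argument for $(M, M^{\mathrm{fin}})$ this transfer is underwritten by the defining property of finitarization — independence is detected on finite subsets — which is unavailable in the general setting. I expect that closing this gap will require either an iterated base-exchange argument alternating between $M_1$ and $M_2$, or an appeal to further structural features (such as the rank equality that follows from identifying all infinite cardinalities) that may be implicit in the $\Psi$-matroid context where this lemma is to be applied.
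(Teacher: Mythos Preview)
Your first part is correct and essentially what the paper does. For the second part, the paper's proof is precisely the duality route you considered and rejected: it asserts ``Observe that $\mathcal{L}_1^{*}\supset\mathcal{L}_2^{*}$'' and then extends $E\setminus B_2$ to a base of $M_1^{*}$. You are right that this observation, unwound, is the very statement under proof, so the paper's argument is circular at exactly the point you flagged.

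The reason neither your direct approach nor the duality approach can be made to work is more fundamental: the second assertion of the lemma is \emph{false} as stated. Take $E=\{1,2\}$, let $M_1$ have $\mathcal{L}_1=\{\emptyset,\{1\}\}$ (so $2$ is a loop), and let $M_2=U_{1,2}$ have $\mathcal{L}_2=\{\emptyset,\{1\},\{2\}\}$. Then $\mathcal{L}_1\subset\mathcal{L}_2$, yet the base $\{2\}$ of $M_2$ contains no base of $M_1$, the only one being $\{1\}$. Correspondingly $\mathcal{L}_2^{*}=\{\emptyset,\{1\},\{2\}\}\not\subset\{\emptyset,\{2\}\}=\mathcal{L}_1^{*}$, so the paper's ``observation'' fails too.

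Thus the ``hard step'' you isolated cannot be closed in general; your intuition that the finitarization-specific argument from Chapter~1 does not transfer is exactly right. In the paper's actual applications (Theorems 5.1.2 and 5.2.1) the two matroids share the same finitarization, and some such extra hypothesis is needed to repair the statement.
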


\begin{proof}
Suppose that $B_{1}$ is a base in $M_{1}$. Then it is independent in $\mathcal{L}_{2}$ and so it extends to some base in $M_{2}$. Conversely, suppose that $B_{2}$ is a base in $M_{2}$. Observe that $\mathcal{L}_{1}^{*}\supset\mathcal{L}_{2}^{*}$. Now $E\setminus B_{2}$ is a base of $M_{2}^{*}$. Since it is independent in $\mathcal{L}_{1}^{*} $, it extends to some base $B^{*}$ in $M_{1}^{*} $. Since $E\setminus B_{2}\subset B^{*}$ , $B_{2}$ contains the complement of $B^{*}$ which is a base in $M_{1}$.
\end{proof}

\begin{lemma}
(~\cite{Inter} Proposition 1.4)
The algebraic cycle matroid $M_{AC}(G)$ is nearly finitary if and only if $G$ has a finite number of vertex disjoint rays.
\end{lemma}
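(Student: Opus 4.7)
The plan is to establish both directions by analyzing the graph-components of a base of $M_{AC}(G)$ and counting the edges needed to reconnect them into an ordinary spanning tree. The central structural observation I would prove first is this: if $B$ is an algebraic spanning tree of $G$ and $(V(G),B)$ has graph-components $C_{1},\dots,C_{k}$, then each $C_{i}$ must contain a ray of $G$. The reason is that, inside $\hat{B}_{\mathrm{algebraic}}$, the only way a graph-component $\hat{C}_{i}$ can be attached to the rest is through the point at infinity $\omega_{G}$, and $\omega_{G}$ lies in the closure of $\hat{C}_{i}$ precisely when $C_{i}$ contains a ray. Since $B$ is algebraically connected, every component must touch $\omega_{G}$, yielding $k$ pairwise vertex-disjoint rays of $G$ (one per $C_{i}$). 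Moreover, extending $B$ to an ordinary spanning tree $F\supset B$ requires exactly $k-n_{G}$ new edges (one per successive merger of components), where $n_{G}$ is the number of connected components of $G$.

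With this lemma in hand, the backward direction is immediate. Assuming $G$ admits no more than $N$ pairwise vertex-disjoint rays, every base $B$ of $M_{AC}(G)$ has at most $N$ graph-components, so every base $F\supset B$ of the finitarization satisfies $|F\setminus B|\le N-n_{G}<\infty$, proving $M_{AC}(G)$ is nearly finitary (in fact, $(N-n_G)$-nearly finitary).

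For the forward direction, I would argue by contrapositive: from infinitely many vertex-disjoint rays $R_{1},R_{2},\dots$, I would produce a base of $M_{AC}(G)$ whose underlying graph has infinitely many components. A first attempt is to take $U:=\bigcup_{i}R_{i}$, which is independent in $M_{AC}(G)$: no finite cycle can fit inside vertex-disjoint paths, and no algebraic cycle can either, because every cycle would have to pass through some endpoint of an $R_{i}$, where the degree is only one. The difficulty is that $U$ may fail to touch every vertex, so extending it to a base via axiom I4 could merge components through the added edges. To avoid this, I would construct $B$ \emph{top-down} from a spanning tree $T$ of $G$: use the hypothesis to locate infinitely many vertex-disjoint rays inside $T$ (this is where one leans on Halin's theorem together with the fact that in a tree every two vertex-disjoint rays are separated by a single edge on the unique finite path between them), and then delete one separating edge between consecutive rays. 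Each deletion splits one graph-component into two, each containing a ray, and preserves both algebraic acyclicity (we only remove edges) and algebraic connectedness (each new component still reaches $\omega_{G}$). Performing this countably often yields an algebraic spanning tree $B$ with infinitely many components, and any $F\supset B$ in $M_{AC}(G)^{\mathrm{fin}}$ must add infinitely many edges to merge them, so $M_{AC}(G)$ is not nearly finitary.

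The hard part is the construction in the forward direction: producing a base of $M_{AC}(G)$ that retains infinitely many graph-components. The subtle point is that axiom I4 gives no control over which components of $U$ get amalgamated during the extension, so one really needs to descend from a spanning tree and choose deletions carefully, verifying both that no algebraic cycle survives and that the shrinking algebraic connectedness is never broken. Everything else — the structural lemma and the counting argument — is routine once the component-to-ray correspondence is in place.
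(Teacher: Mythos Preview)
The paper does not supply its own proof of this lemma; it is quoted from \cite{Inter} and used as a black box in the proof of Theorem~5.1.1. So there is nothing to compare against, and your sketch is being judged on its own merits.

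Your backward direction is sound. The structural observation that every graph-component of a base $B$ of $M_{AC}(G)$ must contain a ray is exactly the right lever, and the counting argument $|F\setminus B|=k-n_G\le N-n_G$ follows cleanly.

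The forward direction, however, has a real gap. You start from an arbitrary spanning tree $T$ and propose to ``locate infinitely many vertex-disjoint rays inside $T$'', but Halin's theorem gives rays in $G$, not in a pre-chosen spanning tree; an arbitrary $T$ need not contain more than one ray even when $G$ contains infinitely many. This is easily repaired: take the rays $R_1,R_2,\dots$ first (their union is a forest, hence independent in $M^{\mathrm{fin}}$) and extend $\bigcup_i R_i$ to a spanning tree $T$ via axiom~I4. The more serious problem is that your top-down deletion need not produce a set that is independent in $M_{AC}(G)$, let alone a base. You only separate the \emph{chosen} rays $R_i$, but a component of your $B$ may well contain other rays of $T$ and hence a double ray; indeed $T$ itself is typically dependent in $M_{AC}(G)$ (think of $T$ the infinite binary tree). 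A cleaner route that avoids this altogether: having built $T\supset\bigcup_i R_i$, invoke the general fact that every base of $M^{\mathrm{fin}}$ contains a base of $M$ to get some base $B\subset T$ of $M_{AC}(G)$, and then argue by pigeonhole. If $|T\setminus B|=m<\infty$, then $B$ has $m+1$ tree-components; each $R_i$ loses at most $m$ edges and so has a tail entirely inside $B$, and with infinitely many vertex-disjoint tails distributed among $m+1$ components, two land in the same component and form a double ray in $B$, a contradiction. This replaces your explicit construction of $B$ with a short counting argument and closes the gap.
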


\begin{theorem}
Suppose $M=(E,\mathcal{L})$ is a $\Psi$-matroid on a graph $G$ with finitely many vertex disjoint rays. Then $M$ is $k$-nearly finitary.
\end{theorem}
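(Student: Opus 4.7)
The plan is to mimic the argument used for algebraic cycle matroids in \cite{Inter}, with Halin's vertex disjoint ray theorem as the main combinatorial tool. First I would observe that since every finite cycle is a topological circle of $\tilde{G}_\Psi$ and hence a circuit of $M$, we have $\mathcal{L}(M)\subset\mathcal{L}(M^{\mathrm{fin}})$. By Lemma 5.1.1, every base $B$ of $M$ sits inside some base $F$ of $M^{\mathrm{fin}}$, and the goal reduces to showing that $|F\setminus B|$ is uniformly bounded by some $k'$ depending only on $G$. By the finite vertex disjoint rays hypothesis combined with Halin's theorem, I can fix $k$ as the maximum number of pairwise vertex-disjoint rays in $G$.

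The core step is to extract, for each $e\in F\setminus B$, a piece of infinite structure inside $B$ that encodes $e$. Since $B$ is a base of $M$ and $e\notin B$, the set $B+e$ contains a unique fundamental circuit $C_e$ of $M$. Because $F\supset B+e$ is independent in $M^{\mathrm{fin}}$, $C_e$ has no finite cycle and is therefore an infinite topological circle of $\tilde{G}_\Psi$. Using the description of such circles as homeomorphic images of $S^1$ in a graph-like space (built from one or more double rays whose ends are closed up through $\Psi$-points or through the equivalence $\simeq$), I can extract from $C_e$ a double ray $D_e\subset B+e$ in $G$; deleting $e$ from $D_e$ then yields two rays $R_e^-,R_e^+$ lying entirely inside $B$.

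Assuming for contradiction that $|F\setminus B|$ can be made arbitrarily large, I would pick $N$ edges $e_1,\dots,e_N$ in some $F\setminus B$ and apply a tail-pruning argument to the family $\{R_{e_i}^{\pm}\}$: since all of these rays lie in the single $M$-independent set $B$, the tree-like geometry of $B$ forces any two such rays that share infinitely many vertices to eventually coincide up to a tail, and by iteratively passing to tails and discarding redundant rays one can extract a pairwise vertex-disjoint subfamily whose size grows linearly in $N$. Letting $N\to\infty$ would produce arbitrarily many pairwise vertex-disjoint rays of $G$, contradicting the bound $k$, and hence $|F\setminus B|\leq k'$ for some $k'=k'(k)$.

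The main obstacle will be the tail-pruning step of the third paragraph. The rays $R_{e_i}^{\pm}$ can a priori interact in complicated ways, and one must show quantitatively that from $N$ such rays sitting inside an $M$-independent set one can always pull out a $\Omega(N)$-sized vertex-disjoint subfamily. This is precisely the ingredient that is carried out carefully for algebraic cycle matroids in \cite{Inter}, and the delicate point here is to verify that the transfer to $\Psi$-matroids goes through, i.e., that every infinite topological circle of $\tilde{G}_\Psi$ really contains a double ray of $G$ (so that the pruning can be carried out inside $G$ itself rather than in the possibly singular space $\tilde{G}_\Psi$).
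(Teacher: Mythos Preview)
Your approach is plausible and essentially reconstructs the ray-extraction argument of \cite{Inter} inside the $\Psi$-matroid, but the paper takes a much shorter and more modular route that sidesteps the tail-pruning entirely. The key observation you are missing is the \emph{reverse} inclusion: every $M_{AC}$-independent set is $M$-independent. Indeed, if $S$ contains no finite cycle and no double ray, then it cannot contain any topological circle of $\tilde G_\Psi$ either, since every infinite $\Psi$-circuit is built out of double rays. Thus $\mathcal{L}_{AC}\subset\mathcal{L}(M)\subset\mathcal{L}(M^{\mathrm{fin}})$, and moreover $M_{AC}^{\mathrm{fin}}=M^{\mathrm{fin}}$ is just the finite cycle matroid of $G$.

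With this sandwich in hand, the paper applies Lemma~5.1.1 in the direction opposite to yours: any base $B$ of $M$ \emph{contains} a base $B_{AC}$ of $M_{AC}$. By the hypothesis on $G$ together with Lemma~5.1.2 and the result of \cite{Inter}, $M_{AC}$ is $k$-nearly finitary, so $B_{AC}$ extends to a base $F$ of $M_{AC}^{\mathrm{fin}}=M^{\mathrm{fin}}$ by adding at most $k$ elements; since $B_{AC}\subset B\subset F$, the same $F$ witnesses $|F\setminus B|\leq k$. In other words, the paper reduces the $\Psi$-matroid statement to the algebraic cycle matroid statement as a black box, whereas you try to rerun that black box's internal combinatorics. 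Your route would work in principle, but the ``linear-in-$N$ vertex-disjoint subfamily'' extraction you flag as the main obstacle is exactly the nontrivial content of \cite{Inter}; the paper's sandwich argument lets you cite it rather than reprove it.
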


\begin{proof}
Consider the algebraic cycle matroid $M_{AC}=(E,\mathcal{L}_{AC})$. Suppose you have an independent set $S$ in $M_{AC}$. Then $S$ contains no finite cycle of $M$ since it contains no algebraic cycle. It also contains no double ray. An infinite circuit $C$ of $M$ is a non-empty set of double rays between elements of $\Psi$ such that $C$ is homeomorphic to the unit circle. For example, you can connect an end to itself with a double ray. So $S$ cannot contain an infinite circuit of $M$. Thus, $S$ has no circuit of $M$ and must be independent in $M$. So $\mathcal{L}_{AC}\subset\mathcal{L}$. $M_{AC}^\mathrm{fin}$ and $M^\mathrm{fin}$ are the finite cycle matroids of $G$ so they are the same. Suppose you have a base $B$ in $M$. By our Lemma 5.1.1, it contains some base $B_{AC}$ in $M_{AC}$. Since $M_{AC}$ is $k$-nearly finitary, you can add a set $A$ of $k$ or less elements to $B_{AC}$ to get a base in $M_{AC}^\mathrm{fin}=M^\mathrm{fin}$. Adding an appropriate subset of $A$ to $B$ will give that same base. So $M$ is $k$-nearly finitary.
\end{proof}
\end{section}

\begin{section}{$P( \Psi)$-Matroids}
Let us now consider a new but related construction. Recall that a $\Psi$-system of a graph $G$ picks a subset $\Psi$ of $\Omega$ and constructs a corresponding graph-like space. We want to consider other possible graph-like spaces associated with $G$.

Let $G$ be a fixed infinite, locally finite, and connected graph. Recall that $|G|$ is a compactification of $G$ which adds ends to $G$. Here, we define a different class of compactifications of $G$ using quotients of $|G|$. This will allow us to construct independence systems that are different from the $\Psi$-systems from above. Let $P_{\Omega}$ be a partition of $\Omega$. Consider the partition $P_{|G|}$ of $|G|$ defined as follows. $P_{|G|}$ is the partition of singletons on $|G|\setminus\Omega$. Then $P_{|G|}$ induces an equivalence relation $\sim_{P}$ on $|G|$. We consider the quotient space $|G|/\sim_{P}$. From this $|G|/\sim_{P}$, we can construct graph-like spaces. Then from these graph-like spaces, we can construct independence systems similar to $\Psi$-systems by picking a subset of $P_{\Omega}$.

It is easy to see that $|G|/\sim_{P}$ is compact. By Theorem 1.1 in ~\cite{Diestel}, $|G|$ is compact and so is any quotient space of $|G|$. We thus have a class of compactifications that interpolates between one point compactification and Freudenthal compactification.

For arbitrary graphs $G$, we can still add points in this way. Without our assumptions of local finiteness and connectedness, it is no longer clear that $|G|/\sim_{P}$ is compact.

From an arbitrary graph $G$, we have constructed spaces $|G|/\sim_{P}$. Let us name this the class of nearly-$P( \Psi )$ graphs. When $P_{|G|}$  agrees with $P_{\Omega}$ on $\Omega$ and no end $\omega \in \Omega (G)$ is identified with a point in the topological subspace $G \subset |G|$, we say it is a $P (\Psi)$-graph. We can follow the same construction as above and define an almost graph-like space with edge set $E(G)$ and vertex set $V(G) \cup P_{\Psi}$.

Let $\Psi$ be a subset of $\Omega(G)$ formed by the union of some subset of elements of $P( \Omega)$. We let $P( \Psi )$ be the partition of $\Psi$ given by the restriction of $P( \Omega)$ to $\Psi$.  Set $|G|_{ P( \Psi )}=(|G|_{\Psi}/\sim_{P})$. Then $|G|_{P( \Psi)}$ is also an almost graph-like space. We can get a graph-like space by taking $|G|_{P( \Psi)}/\simeq$.

The spaces we have constructed are special cases of the one that appear in ~\cite{graphlike}. In  ~\cite{InfGraphic}, Bowler et al. study matroids arising from graph-like spaces. We thus see our constructions as intermediate classes of spaces in the following sense:

\begin{equation*}
\Psi  \subset P( \Psi ) \subset \text{ nearly-}P( \Psi) \subset \text{ graph like} \subset \text { almost graph like}.
\end{equation*}

So our classes seem to be a middle ground in terms of difficulty of study. For these graphs we constructed, we can consider topological circles of nearly-$P( \Psi )$ graphs and ask when these circles give the circuits of a matroid. Regardless of whether they give a matroid, these topological circles will always give us circuits of some independence system. If the topological circles of a nearly-$P ( \Psi )$-graph forms a matroid, we say that we have a $\textit{nearly-}P (\Psi) \textit{ matroid}$. We use similar terminology for $P( \Psi)$-graphs and $P( \Psi )$-matroids. We can extend our earlier theorem to these new constructions.

\begin{theorem}
Suppose $M=(E, \mathcal{L})$ is a nearly-$P( \Psi )$-matroid on a graph $G$ with finitely many disjoint rays. Then $M$ is $k$-nearly finitary.
\end{theorem}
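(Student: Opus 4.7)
The plan is to mirror the proof of Theorem 5.1.3, comparing $M$ with the algebraic cycle matroid $M_{AC}=(E,\mathcal{L}_{AC})$ of $G$ and invoking the fact recalled in Chapter 1 that every nearly finitary algebraic cycle matroid is $k$-nearly finitary for some $k$. First I would check that $M^{\mathrm{fin}}$ coincides with $M_{AC}^{\mathrm{fin}}$: the finite topological circles of the quotient space $|G|_{P(\Psi)}/\simeq$ are exactly the finite cycles of $G$, since the partition identifies only points at infinity (and, in the genuinely nearly-$P(\Psi)$ case, also some vertices with ends), none of which can participate in a finite topological circle. Both finitarizations are therefore the finite cycle matroid of $G$.

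By hypothesis $G$ has finitely many disjoint rays, so Lemma 5.1.2 yields that $M_{AC}$ is nearly finitary, and the result recalled from Inter in the introduction upgrades this to $k$-nearly finitariness for some $k$. The remaining task is to establish the inclusion $\mathcal{L}_{AC}\subset\mathcal{L}$. Granting this, Lemma 5.1.1 produces a base $B_{AC}$ of $M_{AC}$ inside every base $B$ of $M$; an extension of $B_{AC}$ to a base $F_{AC}$ of $M_{AC}^{\mathrm{fin}}=M^{\mathrm{fin}}$ adds at most $k$ edges, and adjoining an appropriate subset of those edges to $B$ produces a base $F$ of $M^{\mathrm{fin}}$ containing $B$, so $|F\setminus B|\leq k$ and $M$ is $k$-nearly finitary.

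The hard part is proving $\mathcal{L}_{AC}\subset\mathcal{L}$, equivalently that every circuit of $M$ contains an algebraic cycle on its edge set. A topological circle in $|G|_{P(\Psi)}/\simeq$ traverses each vertex of $G$ with even degree, so in the absence of vertex-end identifications (the $P(\Psi)$-case covered by Theorem 5.1.3) its edge set is automatically an algebraic cycle. In the genuinely nearly-$P(\Psi)$ setting, a partition class may merge a vertex $v\in V(G)$ with an end $\omega\in\Omega(G)$, and then a single ray from $v$ to $\omega$ becomes a topological circle whose edge set leaves $v$ with odd degree, hence fails to be an algebraic cycle. This is the main obstacle. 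I would address it by exploiting the finite-rays hypothesis: each such exceptional circuit consumes an entire ray of $G$, and $G$ admits only boundedly many disjoint rays, so only boundedly many of these exceptional circuits can coexist disjointly in a common independent set. Concretely, I would enlarge $M_{AC}$ to an auxiliary matroid $M'_{AC}$ by adjoining the edge sets of these exceptional single-ray circuits to its circuit family; the bounded ray supply should ensure $M'_{AC}$ is still $k'$-nearly finitary for some $k'\geq k$, while now $\mathcal{L}(M'_{AC})\subset\mathcal{L}$ genuinely holds. Running the argument above with $M'_{AC}$ in place of $M_{AC}$ then delivers the claim.
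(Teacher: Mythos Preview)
Your first two paragraphs reproduce the paper's proof verbatim: the paper writes ``It is just like the proof for $\Psi$ matroids,'' asserts that an infinite circuit of $M$ is a non-empty set of double rays between elements of $P(\Psi)$, deduces $\mathcal{L}_{AC}\subset\mathcal{L}$, notes $M_{AC}^{\mathrm{fin}}=M^{\mathrm{fin}}$, and finishes via Lemma~5.1.1 exactly as you do. So the core argument matches.

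Your third paragraph goes beyond the paper. The paper does \emph{not} discuss the possibility of single-ray circuits arising from a vertex--end identification; it simply asserts that every infinite circuit is built from double rays and moves on. Either the paper is tacitly working with the narrower reading of ``nearly-$P(\Psi)$'' in which the partition only merges ends with ends (the text around the definition is ambiguous on this point), or the paper's proof has the very gap you are flagging. In either case, your concern is legitimate but not something the paper resolves.

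Your proposed patch, however, is not yet a proof. Adjoining the edge sets of the single-ray ``exceptional'' circuits to the circuit family of $M_{AC}$ does not automatically produce a matroid: you would need to verify the circuit-elimination axiom for the enlarged family, and there is no obvious reason it holds (rays and double rays can overlap in complicated ways). You would also need to argue carefully that $M'_{AC}$ is $k'$-nearly finitary; the heuristic that ``only boundedly many exceptional circuits can coexist disjointly'' controls disjoint packings, not the gap $|F\setminus B|$ between a base and its finitarized extension. If you want to make this route rigorous, a cleaner line is to observe that each single-ray circuit, together with one of the infinitely many $v$--$R$ paths that witnesses domination, contains a double ray; so any set containing such a circuit together with even one such path already fails to lie in $\mathcal{L}_{AC}$, and one can bound the discrepancy between bases of $M$ and of $M_{AC}$ directly using the finite-ray hypothesis rather than inventing $M'_{AC}$.
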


\begin{proof}
It is just like the proof for $\Psi$ matroids. Consider the algebraic cycle matroid $M_{AC}=(E,\mathcal{L}_{AC})$. Suppose you have an independent set $S$ in $M_{AC}$. Then $S$ contains no finite cycle of $M$ since it contains no algebraic cycle. It also contains no double ray. An infinite circuit $C$ of $M$ is a non-empty set of double rays between elements of $P( \Psi )$ such that $C$ is homeomorphic to the unit circle. So $S$ cannot contain an infinite circuit of $M$. Thus, $S$ has no circuit of $M$ and must be independent in $M$. So $\mathcal{L}_{AC}\subset\mathcal{L}$. $M_{AC}^\mathrm{fin}$ and $M^\mathrm{fin}$ are the finite cycle matroids of $G$ so they are the same. Suppose you have a base $B$ in $M$. By our Lemma 5.1.1, it contains some base $B_{AC}$ in $M_{AC}$. Since $M_{AC}$ is $k$-nearly finitary, you can add a set $A$ of $k$ or less elements to $B_{AC}$ to get a base in $M_{AC}^\mathrm{fin}=M^\mathrm{fin}$. Adding an appropriate subset of $A$ to $B$ will give that same base. So $M$ is $k$-nearly finitary.
\end{proof}

Our results are small steps towards a larger question.

Question: When are nearly $P( \Psi)$ matroids nearly finitary and when are they $k$-nearly finitary.

Another interesting direction to pursue is to ask when our nearly $P( \Psi )$ constructions give matroids. In the comment section of ~\cite{Blog}, Bowler asks the question of when a $P( \Psi )$-independence system for a graph $G$ gives a matroid in the case where $P( \Psi)$ is a bipartition of $\Omega(G)$. It does not seem to be clear that a bipartition of two Borel sets will give a matroid.
\end{section}

\begin{section}{Matroids and Axiom Systems}

Bowler and Carmesin also introduce a different way to construct matroids in the following sense:

Here, we allow trees to be infinite.

\begin{definition}
(~\cite{Games} Definition 5.1)
A tree $\mathcal{T}$ of matroids consists of a tree $T$, together with a function $M$ assigning to each node $t$ of $T$ a matroid $M(t)$ on a ground set $E(t)$, such that for any two nodes $t$ and $t'$ of $T$, if $E(t) \cap E(t')$ is nonempty then $tt'$ is an edge of $T$.
For any edge $tt'$ of $T$, we set $E(tt')=E(t) \cap E(t')$. We also define the ground set of $\mathcal{T}$ to be $E=E( \mathcal{T}) = ( \bigcup_{t \in V(T)} E(t)) \setminus ( \bigcup_{tt' \in E(T)} E(tt'))$.
We shall refer to the edges which appear in some $E(t)$ but not in $E$ as dummy edges of $M(t)$: thus the set of such dummy edges is $\bigcup_{tt' \in E(t)} E(tt')$.
\end{definition}

\begin{definition}
A tree $\mathcal{T} = (T,M)$ of matroids is of overlap $1$ if, for every edge $tt'$ of $T$, $|E(tt')|=1$. In this case, we denote the unique element of $E(tt')$ by $e(tt')$.
\end{definition}

Given a tree of matroids of overlap $1$ as above and a set $\Psi$ of ends of $T$, Bowler and Carmesin go on to define $\Psi$-pre-circuits and $\Psi$-circuits of $\mathcal{T}$. A $\Psi$-pre-circuit  of $\mathcal{T}$ consists of a subtree $C$ of $T$ together with a function $o$ assigning to each vertex $t$ of $C$ a circuit of $M(t)$, such that all ends of $C$ are in $\Psi$ and for any vertex $t$ of $C$ and any vertex $t'$ adjacent to $t$ in $T$, $e(tt') \in o(t)$ if and only if $t' \in C$. The set of $\Psi$-pre-circuits is denoted $\overline{\mathcal{C}} ( \mathcal{T}, \Psi)$. Any $\Psi$-pre-circuit $(C,o)$ has an underlying set $(C,o)=E \cap \bigcup_{t \in V(C)} o(t)$. Nonempty subsets of $E$ arising in this way are called $\Psi$-circuits of $\mathcal{T}$.  The set of $\Psi$-circuits of $\mathcal{T}$ is denoted $C( \mathcal{T}, \Psi)$. Such $\Psi$-circuits are often circuits of a matroid in the following sense.

In Corollary 6.6 of ~\cite{Games}, the authors show that the Axiom of Determinacy is equivalent to the statement that every set $\Psi$ of ends of every tree of finite matroids of overlap $1$ induces a matroid. Regardless of axiom system used, a set $\Psi$ of ends of a tree of finite matroids of overlap $1$ induces an independence system with its $\Psi$-circuits. Let us name these independence systems $\Psi$-finite tree systems. It may be tempting to think that the Axiom of Determinacy strictly gives us more matroids because of their result. However, it seems that we also lose some matroids when we assume the Axiom of Determinacy. We will use the classical finitary matroids we introduced earlier on page 8 in the introduction to show precisely what we mean.

\begin{theorem}
It is impossible to pick an axiom system such that all $\Psi$-finite tree systems are matroids and all classical finitary matroids are matroids.
\end{theorem}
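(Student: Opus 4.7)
The plan is to derive a contradiction from the assumption that some single axiomatic framework simultaneously validates both statements, by showing that each statement forces incompatible choice-theoretic commitments. The first condition pins us to the Axiom of Determinacy, while the second forces the Axiom of Choice (or at least enough of it to produce bases in arbitrary vector spaces), and these two cannot coexist.

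First I would invoke Corollary 6.6 of \cite{Games}, which is cited just above in the paper: the statement ``every set $\Psi$ of ends of every tree of finite matroids of overlap $1$ induces a matroid'' is equivalent to the Axiom of Determinacy. Thus if our axiom system makes every $\Psi$-finite tree system a matroid, then it entails $\mathrm{AD}$. Since $\mathrm{AD}$ is well known to imply the failure of the full Axiom of Choice (indeed, it refutes choice already for families of size $2^{\aleph_0}$), any such axiom system must reject $\mathrm{AC}$.

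Next I would show that the second condition forces a strong form of $\mathrm{AC}$. The natural witness is a vector space. Given any vector space $V$ over a field $k$, let $\mathcal{L}$ consist of the linearly independent subsets of $V$. Then $(V,\mathcal{L})$ satisfies axioms $F1$--$F4$ from the definition of classical finitary matroid in the introduction: $\emptyset$ is independent, independence is closed under subsets, the Steinitz exchange property yields $F3$, and linear independence is witnessed by finite subsets, yielding $F4$. Hence $(V,\mathcal{L})$ is a classical finitary matroid. If the second condition holds then $(V,\mathcal{L})$ is a matroid in the sense of the introduction, so axiom $I4$ applied with $A=\emptyset$ and $X=V$ produces a maximal independent set, i.e.\ a basis of $V$. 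So the second condition implies that every vector space has a basis, and by Blass's theorem (``every vector space has a basis'' is equivalent to $\mathrm{AC}$ over $\mathrm{ZF}$) the second condition entails $\mathrm{AC}$.

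Combining the two reductions, the hypothesised axiom system would prove both $\mathrm{AD}$ and $\mathrm{AC}$, which is impossible; this finishes the argument. The main obstacle, and the one that I would be careful to document, is the use of Blass's result: one must verify that the vector space we pick really does land in the scope of the assumed ``all classical finitary matroids are matroids'' statement, so it is important that $F1$--$F4$ hold without any choice beyond what is needed to define the linear structure itself, which is the case. The first reduction to $\mathrm{AD}$ is immediate from the cited equivalence, so the whole argument is short once Blass's theorem is in hand.
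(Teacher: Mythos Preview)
Your proposal is correct and takes essentially the same approach as the paper: both arguments use the Bowler--Carmesin equivalence to get $\mathrm{AD}$ from the first condition, Blass's theorem relating $\mathrm{AC}$ to vector-space bases, and the observation that the linearly independent subsets of any vector space form a classical finitary matroid whose axiom $I4$ would produce a basis. The only cosmetic difference is that the paper runs the second half contrapositively (from $\neg\mathrm{AC}$ it picks a basisless $V$ and exhibits the resulting classical finitary matroid as a non-matroid), whereas you phrase it as ``second condition $\Rightarrow$ every vector space has a basis $\Rightarrow \mathrm{AC}$''; the content is identical.
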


\begin{proof}
Suppose all $\Psi$-finite tree systems are matroids. Then the result in ~\cite{Games} shows that we are assuming the Axiom of Determinacy. Thus, the Axiom of Choice is false. By the result of ~\cite{VectorChoice}, the Axiom of Choice is equivalent to the statement that every vector space has a basis. So we can deduce that there exists some vector space $V$ with no basis. Consider $M=(E(V), \mathcal{L} (V))$ where $E(V)$ is the set of vectors of $V$ and $\mathcal{L} (V)$ is the set of linearly independent subsets of $E(V)$. It is quite clear that $M$ satisfies the axioms of a classical finitary matroid that we introduced earlier. The empty set is linearly independent. Subsets of linearly independent sets are linearly independent. If $A$ and $B$ are linearly independent subsets of $E(V)$ with $|A| < |B| < \infty$, then there is some $b \in B \setminus A$ such that $A+b$ is linearly independent. A subset $S$ of $E(V)$ is linearly independent if and only if all finite subsets of $S$ are independent.

Now we show that $M$ violates our fourth independence axiom for matroids. Note that $\emptyset \in \mathcal{L}(V)$ and consider the set

\begin{equation*}
X:=\{ S \in \mathcal{L} \colon \emptyset \subset S \subset E(V) \}.
\end{equation*}
$X$ is the set of linearly independent subsets of $V$. $X$ has no maximal element since $V$ has no basis.
\end{proof}

It thus seems that the classes of matroids we have depend on the axiom system we are using. Our argument also shows that the statement "Every classical finitary matroid is a matroid in our sense" is equivalent to the Axiom of Choice.

Bowler and Carmesin's $\Psi$-finite tree systems motivate us to generalize their constructions using the same idea in our nearly-$P( \Psi)$ constructions. They consider a subset $\Psi$ of the ends of a tree $T$. It seems natural to consider partitions of $\Psi$.
\end{section}

  \chapter[%
      Short Title of 6th Ch.
   ]{%
     Thin Sum Matroids
   }%
   \label{ch:6thChapterLabel}
   \begin{section}{Nearly-Thin Families}
In the introduction, we defined what it means for a family of functions to be nearly thin. Recall that an indexed family of functions $\mathcal{F}_E$ from $A$ to $k$ is called $n$-nearly thin if there are at most $n$ elements $a \in A$ such that there are infinitely many $e \in E$ with $f(e)(a)$ nonzero. Following ~\cite{Cocircuit}, we define a matroid $M$ to be $\textit{tame}$ if the intersection of any circuit of $M$ with any cocircuit of $M$ is finite. We define a matroid to be $\textit{wild}$ if it is not tame. In ~\cite{ThinShum}, the authors show that the class of tame thin sums matroids over a field $k$ is closed under duality in their Theorem 1.4. We will now list some conjectures.

\begin{conjecture}
If $M$ is a thin sums matroid and $n$ is the minimal number such that $M$ is thinly representable by an $n$-nearly thin family then $M^*$ is an $n$-nearly finitary matroid with $n$ being minimal.
\end{conjecture}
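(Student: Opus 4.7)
The plan is to establish the forward direction — that an $n$-nearly thin representation $f$ of $M$ forces $M^*$ to be $n$-nearly finitary — and then to bootstrap minimality from it. Write $A = A_{\mathrm{good}} \sqcup A_{\mathrm{bad}}$ with $A_{\mathrm{bad}} := \{a \in A : f(e)(a) \neq 0 \text{ for infinitely many } e \in E\}$, so $|A_{\mathrm{bad}}| = n$. Let $g : E \to k^{A_{\mathrm{good}}}$ be the restriction; then $g$ is a thin family, and by the duality result of \cite{ThinShum}, $M_g^*$ is finitary. Since every thin dependence in $M = M_f$ restricts to a thin dependence in $M_g$ (we are merely dropping constraints), $\mathcal{L}(M_g) \subseteq \mathcal{L}(M)$, and dually $\mathcal{L}(M^*) \subseteq \mathcal{L}(M_g^*)$; finitariness of $M_g^*$ then upgrades this to $\mathcal{L}((M^*)^{\mathrm{fin}}) \subseteq \mathcal{L}(M_g^*)$.

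The core technical input is a linear algebra lemma: for every base $B$ of $M$, any maximal-in-$B$ subset $B_g \subseteq B$ that is $M_g$-thinly independent satisfies $|B \setminus B_g| \leq n$. Suppose otherwise and pick distinct $e_1, \ldots, e_{n+1} \in B \setminus B_g$; maximality furnishes for each $i$ a nontrivial thin dependence $c_i$ of $M_g$ supported in $B_g + e_i$ with $c_i(e_i) \neq 0$. The evaluations $v_i := \bigl(\sum_{e'} c_i(e') f(e')(a)\bigr)_{a \in A_{\mathrm{bad}}}$ lie in $k^{A_{\mathrm{bad}}}$, a space of dimension $n$, so a nontrivial linear relation $\sum_i \lambda_i v_i = 0$ exists. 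The combination $c := \sum_i \lambda_i c_i$ is then supported in $B$, is nontrivial (its value at any $e_i$ with $\lambda_i \neq 0$ is $\lambda_i c_i(e_i) \neq 0$), vanishes on $A_{\mathrm{good}}$-coordinates because each $c_i$ does, and vanishes on $A_{\mathrm{bad}}$ by choice of $\lambda_i$ — so $c$ is a nontrivial thin dependence of $B$ in $M$, contradicting independence of $B$.

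To finish the forward direction, fix a base $B^*$ of $M^*$ and set $B := E \setminus B^*$, a base of $M$. By Lemma 5.1.1 applied with $\mathcal{L}(M_g) \subseteq \mathcal{L}(M)$, $B$ contains a base $\tilde{B}_g$ of $M_g$, which is perforce maximal-in-$B$ $M_g$-independent; the lemma above thus gives $|B \setminus \tilde{B}_g| \leq n$. Any base $F^*$ of $(M^*)^{\mathrm{fin}}$ extending $B^*$ (existing by axiom I4) lies in $\mathcal{L}(M_g^*)$ and thus extends to some $M_g^*$-base $F_g^{*\prime} = E \setminus \tilde{B}_g'$ with $\tilde{B}_g'$ an $M_g$-base contained in $B$. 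Since $F^* \subseteq F_g^{*\prime}$, the same lemma gives $|F^* \setminus B^*| \leq |F_g^{*\prime} \setminus B^*| = |B \setminus \tilde{B}_g'| \leq n$, and Theorem 3.1.1 applied to $M^*$ upgrades this bound to all base pairs, so $M^*$ is $n$-nearly finitary.

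The main obstacle will be the minimality half. One direction is immediate: any $(n-1)$-nearly thin representation would, by the forward argument, force $M^*$ to be $(n-1)$-nearly finitary, contradicting minimality of $n$ for $M^*$. The harder direction — producing from the assumption that $M^*$ is \emph{exactly} $n$-nearly finitary an actual $n$-nearly thin representation of $M$ — asks us to show that no representation can do with fewer than $n$ infinite-support rows realized. My plan here would be to leverage the tame-closure-under-duality result of \cite{ThinShum}: a thin representation of $M^*$ on the finitarization side, combined with $n$ extra coordinates encoding the uniformly bounded base-difference between $M^*$ and $(M^*)^{\mathrm{fin}}$, should reconstruct an $n$-nearly thin representation of $M$. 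Outside the tame case this duality is not available, and that is where I expect the real difficulty to lie — likely requiring either restriction to tame matroids or genuinely new input.
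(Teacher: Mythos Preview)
This statement is listed in the paper as Conjecture~6.1.1 and is \emph{not proved there}; the paper offers only a single supporting example and explicitly leaves the conjecture open. So there is no proof in the paper to compare against, and your proposal should be read as an attempt at an open problem rather than as a reconstruction.

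There is a genuine gap in your forward argument, at the step where you form the vectors
\[
v_i := \Bigl(\sum_{e'} c_i(e')\, f(e')(a)\Bigr)_{a \in A_{\mathrm{bad}}} \in k^{A_{\mathrm{bad}}}.
\]
Your $c_i$ is a thin dependence only for the restricted family $g$ over $A_{\mathrm{good}}$; nothing guarantees that for $a \in A_{\mathrm{bad}}$ the sum $\sum_{e'} c_i(e')\, f(e')(a)$ has only finitely many nonzero terms. Indeed, $B_g$ (and hence the support of $c_i$) is typically infinite, and the defining feature of $a \in A_{\mathrm{bad}}$ is precisely that infinitely many $e$ have $f(e)(a)\neq 0$. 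So the entries of $v_i$ may be undefined formal infinite sums in the field $k$, the linear-algebra step in $k^{A_{\mathrm{bad}}}$ cannot be carried out, and even if you could choose the $\lambda_i$, the resulting $c=\sum_i \lambda_i c_i$ need not satisfy the requirement (implicit in the paper's definition of thin dependence) that for every $a\in A$ only finitely many summands $c(e)f(e)(a)$ are nonzero. The contradiction with independence of $B$ in $M$ therefore does not follow.

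You already flag the minimality direction as incomplete; combined with the issue above, the proposal does not establish either direction of the conjecture in general. A natural repair to explore for the forward direction would be to impose a hypothesis (for instance tameness, or that every $M$-circuit meets $\{e:f(e)(a)\neq 0\}$ finitely for each $a\in A_{\mathrm{bad}}$) that forces the relevant sums to be finitely supported; absent such control, the linear-algebra reduction to $k^{A_{\mathrm{bad}}}$ does not go through.
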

\begin{conjecture}
If $M$ is a thin sums matroid that is not thinly representable by any nearly thin family, then $M^*$ is not nearly finitary.
\end{conjecture}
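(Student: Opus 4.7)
The plan is to argue the contrapositive: assuming $M^*$ is nearly finitary, I would try to produce an $n$-nearly thin family representing $M$. As a first step I would strengthen the hypothesis to ``$M^*$ is $n$-nearly finitary for some fixed $n$'', deferring the passage from ``nearly finitary'' to ``$n$-nearly finitary'' to problem $P1$. Conjecture~6.1.1 is the analogous statement in the opposite direction, so the natural strategy is to try to prove both conjectures together, exhibiting a correspondence between the defect $n$ on the representation side and the defect on the dual side.

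Since $M$ is thinly representable by some family $f\colon E \to k^A$, finite thin dependencies of $f$ coincide with ordinary finite linear dependencies, so $M^{\mathrm{fin}}$ is representable over $k$ by $f$ itself. By the duality theorem of Shum~\cite{ThinShum} the dual $(M^{\mathrm{fin}})^*$ is then a tame thin sums matroid, thinly represented by some thin family $\tilde f\colon E \to k^{\tilde A}$; this would form the ``thin part'' of the target representation of $M$. Next I would use Lemma~5.1.1 to compare bases of $M$, $M^{\mathrm{fin}}$, $(M^{\mathrm{fin}})^*$ and $M^*$. Taking complements, the assumption that $M^*$ is $n$-nearly finitary says that each base of $M^{*\,\mathrm{fin}}$ exceeds a base of $M^*$ by at most $n$ elements, and one would like to transport this via duality into control on how far $M$ deviates from $(M^{\mathrm{fin}})^*$. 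Morally, $M$ should then be obtainable from $(M^{\mathrm{fin}})^*$ by imposing at most $n$ extra thin-sum relations, each possibly involving infinitely many elements of $E$.

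The construction proper would then adjoin $n$ new coordinates $a_1,\dots,a_n$ to $\tilde A$, one per extra relation, on which infinitely many $e\in E$ are allowed to evaluate nonzero, yielding an $n$-nearly thin family. Three things must then be checked: that the $n$ new coordinates can be extracted canonically from the $n$-nearly finitary structure of $M^*$; that adjoining them to $\tilde f$ produces a family whose thin dependencies are exactly the dependencies of $M$; and that no further non-thin coordinates sneak in.

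The main obstacle will be the first of these. The difference between $M^{*\,\mathrm{fin}}$ and $(M^{\mathrm{fin}})^*$ is not a priori controlled by the nearly finitary hypothesis on $M^*$, and producing the $n$ explicit ``infinite cocircuits'' of $M$ that are to play the role of the new coordinates seems to require a genuinely new idea tying duality and finitarization together. A secondary but essential obstacle is the reliance on $P1$: without it the proposed construction only handles $n$-nearly finitary duals, and the jump to arbitrary nearly finitary duals is as hard as $P1$ restricted to this class. Consequently I expect Conjecture~6.1.2 to be at least as hard as $P1$ for duals of thin sums matroids, and that any full proof will proceed through a quantitative version of Conjecture~6.1.1 together with an affirmative answer to $P1$.
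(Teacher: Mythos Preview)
The statement you were asked to prove is Conjecture~6.1.2 in the paper, and the paper does \emph{not} prove it. It is listed alongside Conjectures~6.1.1 and~6.1.3 as an open problem; the only supporting material the author gives is a single rank-$1$ example consistent with Conjectures~6.1.1 and~6.1.3, followed by the remark that ``possible future directions include finding more substantial evidence for these conjectures.'' So there is no proof in the paper to compare your proposal against.

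That said, your proposal is not a proof either, and you correctly identify why. Your plan reduces the conjecture to two open problems: a quantitative form of Conjecture~6.1.1 (that an $n$-nearly finitary dual forces an $n$-nearly thin representation) and problem~$P1$ (to pass from ``nearly finitary'' to ``$n$-nearly finitary''). Both are unproved in the paper and in the literature it cites. In particular, your key step---extracting exactly $n$ ``extra'' thin-sum relations from the $n$-nearly finitary structure of $M^*$ and adjoining them as new coordinates to the thin representation of $(M^{\mathrm{fin}})^*$---has no mechanism behind it: you have not explained how the base-difference data in $M^*$ versus $(M^*)^{\mathrm{fin}}$ translates into concrete coordinate functions on $E$, nor why $(M^{\mathrm{fin}})^*$ should even be the right starting point (the paper never asserts $M^{*\,\mathrm{fin}} = (M^{\mathrm{fin}})^*$, and in general these differ). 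Your own concluding assessment, that the conjecture is at least as hard as $P1$ restricted to this class, is accurate and matches the spirit in which the paper poses it.
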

\begin{conjecture}
If $M$ is an $n$-nearly finitary tame thin sums matroid, then its dual tame thin sums matroid is thinly representable by an $n$-nearly thin family.
\end{conjecture}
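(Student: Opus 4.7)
The plan is to exploit the duality of tame thin sums matroids together with the structural relationship between $M$ and its finitarization $M^\mathrm{fin}$. By the tame thin sums duality result (Theorem 1.4 of \cite{ThinShum}), $M^*$ is again a tame thin sums matroid and so admits some thin sums representation; the task is to produce one in which at most $n$ coordinates have infinite support.

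First I would fix a base $B$ of $M$ contained in a base $F$ of $M^\mathrm{fin}$ with $|F\setminus B|=m\leq n$, and enumerate $F\setminus B=\{e_1,\ldots,e_m\}$. For each $e_i$, the fundamental $M$-circuit $C_i$ in $B+e_i$ must be infinite, since a finite circuit would already witness dependence over $B$ in $M^\mathrm{fin}$. These $C_i$ are precisely the infinite circuits of $M$ responsible for $M$ failing to be finitary, and they are infinite cocircuits of $M^*$.

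The main construction would then be to take a thin representation of $(M^\mathrm{fin})^*$ and augment it with one coordinate per $C_i$. Since $M^\mathrm{fin}$ is finitary, hence trivially tame, and inherits a thin sums representation from $M$, the ThinShum duality furnishes a genuinely thin family $h\colon E\to k^{B_0}$ representing $(M^\mathrm{fin})^*$. For each $i$, let $h_i\colon E\to k$ encode the (essentially unique) thin dependence of $M^*$ supported on $C_i$. The augmented family on index set $B_0\cup\{1,\ldots,m\}$ has at most $m\leq n$ coordinates with infinite support and is thus $n$-nearly thin. What remains is to verify that its thin dependences are exactly the circuits of $M^*$.

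The main obstacle will be establishing this last matching. One direction is routine: every finite cocircuit of $M$ arises from $h$, and each $C_i$ is by construction a thin dependence. The delicate direction is the converse, where one must rule out spurious thin dependences in the augmented family and show that every infinite cocircuit of $M$ decomposes into a finite cocircuit of $M^\mathrm{fin}$ plus a finite combination of the $C_i$. Tameness of $M$ should enter in an essential way here, bounding the intersection of any cocircuit of $M^*$ with each $C_i$ so that the coefficients in the decomposition are well-defined. I expect a fundamental-cycle style argument, comparing behavior of $M$ and $M^\mathrm{fin}$ along the fixed base $B$, to be the cleanest route.
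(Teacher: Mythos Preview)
This statement is listed in the paper as a \emph{conjecture} (Conjecture~6.1.3), not as a theorem; the paper offers no proof and only remarks that a single example (the rank-one uniform matroid on $\mathbb{N}$) is consistent with it. So there is no ``paper's own proof'' to compare against: you are attempting to prove what the author explicitly leaves open.

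As for the proposal itself, the high-level direction---starting from a genuinely thin representation of $(M^{\mathrm{fin}})^*$ and adding finitely many coordinates to reach a representation of $M^*$---is at least oriented correctly, since adding coordinates enlarges the class of thinly independent sets and $\mathcal{L}\bigl((M^{\mathrm{fin}})^*\bigr)\subset\mathcal{L}(M^*)$. But several steps are not yet justified. First, when you say ``let $h_i$ encode the thin dependence of $M^*$ supported on $C_i$,'' note that $C_i$ is a circuit of $M$, hence a \emph{cocircuit} of $M^*$; cocircuits need not support thin dependences in a representation of $M^*$, so you should say precisely what function $h_i$ is and why adding it as a coordinate has the desired effect. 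Second, the claim that $M^{\mathrm{fin}}$ ``inherits a thin sums representation from $M$'' is not automatic and would need an argument. Third, and most seriously, the heart of the matter---showing that every circuit of $(M^{\mathrm{fin}})^*$ which is not a circuit of $M^*$ is killed by exactly your $m$ new coordinates, and that no new spurious dependences survive---is acknowledged as open in your own sketch. That is the whole difficulty; nothing in the sketch indicates how tameness would actually be used to close it. At present this is an outline of a plausible strategy toward an open problem rather than a proof.
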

Since every nearly thin family is $n$-nearly thin for some $n \in \mathbb{N}$, proving these conjectures could lead to a proof that nearly finitary tame thin sums matroids are $n$-nearly finitary for some $n$.

We now introduce an example of a thin sums matroid over a $1$-nearly thin family whose dual is $1$-nearly finitary. Let $k$ be any field and let $A={a}$ be a singleton set. Consider the family of functions $\mathcal{F}_{\mathbb{N}}$ defined by the rule $f(n)(a)=1$ for all $n \in \mathbb{N}$. Let $M = (\mathbb{N}, \mathcal{L}(M))$ where $\mathcal{L}(M)$ consists of thinly independent subsets of $\mathbb{N}$. Then $M$ is a matroid whose independent sets are sets of cardinality at most $1$. Independent sets in $M^*$  consists of subsets of $\mathbb{N}$ whose complement has cardinality at least $1$. Every finite subset of $\mathbb{N}$ is independent in $M^*$. So the finitarization of $M^*$  is $(\mathbb{N}, 2^{\mathbb{N}})$. $M^*$ is $1$-nearly finitary and $M$ is a thin sums matroid over a $1$-nearly thin family. This example is consistent with conjectures $6.1.1$ and $6.1.3$. Possible future directions include finding more substantial evidence for these conjectures.
\end{section}

\begin{section}{Topological Matroids}
Thin sums representability is an interesting generalization of representability in matroids. Here, we introduce a new notion of representability for matroids. Let $E$ be a countable set, $V$ be a topological vector space over a field $k$, and let $f: E \rightarrow V$. We define
\begin{equation*}
C(f):= \{ c \in k^E , \sum_{e \in E} c(e)f(e) \text{ converges in the topology of } V \}.
\end{equation*}

When we say an infinite countable sum converges in a topology, we mean that one of its sequences of partial sums converges absolutely.

We also define a function $\hat{f}: C(f) \rightarrow V$ by 
\begin{equation*}
\hat{f} (c) := \sum_{e \in E} c(e)f(e).
\end{equation*}

For $S \subset E$, we define 

\begin{equation*}
C_S (f):=\{ c \in C \colon c(e)=0 \text{ } \forall e \in E \setminus S \}.
\end{equation*}

We then define $\hat{f}|_S$ to be the restriction of $\hat{f}$ to $C_S (f)$.

By construction, the above sum converges on the domain of $\hat{f}$. We define

\begin{equation*}
\mathcal{L} (f) := \{ S \subset E \text{ s.t. } \ker ( \hat{f}|_S ) = \{ 0 \} \}.
\end{equation*}

This gives us an independence system $M_f = (E, \mathcal{L} (f))$. We will refer to systems constructed this way as $\textit{topological independence systems}$. Whenever we have a topological independence system that is also a matroid, we say that it is a $\textit{topological matroid}$. Let $V=k^A$ for some set $A$ and consider the following topology on $k^A$. We put the discrete topology on $k$ and then we give $k^A$ the topology of pointwise convergence. In this case, our construction becomes a thin sums system as defined on page 15 of this thesis.

Without lost of generality, we can assume that $V= \mathrm{Im}( \hat{f} )$. This motivates us to consider the following:

\begin{equation*}
\mathcal{L}^\mathrm{iso} (f) := \{ S \in \mathcal{L} (f) \text{ s.t. } \mathrm{Im} ( \hat{f}|_S ) = \mathrm{Im} ( \hat{f} ) \}.
\end{equation*}

We want to characterize maximal independent sets. In particular, we guess that $\mathcal{L}^\mathrm{iso} (f) = \mathcal{L}^\mathrm{max} (f)$. Unfortunately, this will not always be the case. To see why, consider the Bean graph $G$ as introduced on page \pageref{BeanGraph} of the introduction to this thesis. For the reader's convenience, we include a picture of the Bean graph in Figure 6.1.

\begin{figure}
\includegraphics{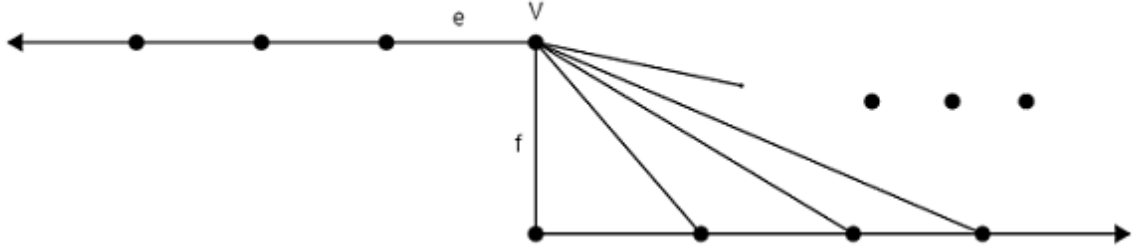}
\caption{Bean Graph}
\label{BeanGraph3}
\end{figure}

Let $V(G)$ be the set of vertices of this Bean graph. Let $k=\mathbb{F}_2$ and consider $k^{V(G)}$ as a vector space. Follow the construction of $M_{ACthin}(G)$ on page 14-15 of our introduction to define $f$. From this $f$ applied to our above construction with the discrete topology on $k$ and the topology of pointwise convergece on $k^{V(G)}$, we get a topological independence system that is also a thin sums system. By the proof on pages 14-15 of our introduction, this is also the algebraic cycle system of the Bean graph. Because the labelled vertex $v$ has infinitely many adjacent edges, $C(f)$ is the set of elements $c$ of $k^E$ where $c(e)$ has finite support on the set of edges adjacent to $v$. Consider the set $S$ of all top edges and all edges adjacent to vertex $v$ labelled in the graph. This $S$ is a maximal independent set since it is an algebraic spanning tree. However, $S$ is not an element of $\mathcal{L}^\mathrm{iso} (f)$ since $S$  $\mathrm{Im} ( \hat{f}|_S )$ cannot contain any element of $k^{V(G)}$ whose support has infinitely many lower vertices. Using an appropriate subset of the lower edges, it is possible to have an element in $\mathrm{Im} ( \hat{f} ) \subset k^{V(G)}$ whose support has infinitely many lower vertices. Thus, $\mathrm{Im} ( \hat{f}|_S )$ cannot be all of $\mathrm{Im} ( \hat{f})$.

The following theorem gives us a condition for when an indepedent subset of $M_f$ is a base. 
\begin{theorem}
Let $M_f$ be a topological independence system. An independent set $S$ of $M_f$ is maximal if and only if for all $e\in E$, 
\begin{equation*}
f(e)\in\hat{f}\left(C_{S}(f)\right).
\end{equation*}
\end{theorem}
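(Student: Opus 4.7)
The plan is to prove the two directions separately, relying on the fact that $C_S(f)$ and $C(f)$ inherit a vector space structure from the topology on $V$ (pointwise sums of convergent series converge to the pointwise sum, and scalars distribute through the summation). First I would establish the trivial inclusion: for every $e \in E$, the indicator function $\delta_e \in k^E$ lies in $C(f)$ and satisfies $\hat{f}(\delta_e) = f(e)$, and moreover $\delta_e \in C_S(f)$ whenever $e \in S$.

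For the reverse direction, assume $S$ is independent and that $f(e) \in \hat{f}(C_S(f))$ for every $e \in E$. If $e \in S$ there is nothing to check, so suppose toward contradiction that $S + e$ is independent for some $e \notin S$. By hypothesis pick $c_S \in C_S(f)$ with $\hat{f}(c_S) = f(e)$. Then the function $c := c_S - \delta_e$ lies in $C_{S+e}(f)$ by linearity of convergence, and $\hat{f}(c) = f(e) - f(e) = 0$. Since $c(e) = -1 \neq 0$, $c$ is a nontrivial kernel element of $\hat{f}|_{S+e}$, contradicting independence of $S + e$.

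For the forward direction, suppose $S$ is a maximal independent set and fix $e \in E$. If $e \in S$, then $\delta_e \in C_S(f)$ realizes $f(e) = \hat{f}(\delta_e)$. If $e \notin S$, then $S + e$ is dependent, so there is a nonzero $c \in C_{S+e}(f)$ with $\hat{f}(c) = 0$. Decompose $c = c|_S + c(e)\delta_e$; if $c(e) = 0$, then $c|_S = c \neq 0$ would give a nontrivial element of $\ker(\hat{f}|_S)$, contradicting independence of $S$. Hence $c(e) \neq 0$, and rescaling yields $-c|_S / c(e) \in C_S(f)$ with $\hat{f}(-c|_S/c(e)) = f(e)$, which shows $f(e) \in \hat{f}(C_S(f))$.

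The main obstacle I anticipate is justifying the algebraic manipulations of series inside $C(f)$: specifically, that $C(f)$ is closed under addition, subtraction, and scalar multiplication, and that $\hat{f}$ respects these operations. Because the paper defines convergence via a sequence of partial sums converging absolutely in a topological vector space, these properties follow from the continuity of the vector space operations, but it is worth stating this explicitly so that the step $c_S - \delta_e \in C_{S+e}(f)$ and the rescaling $-c|_S/c(e) \in C_S(f)$ are rigorously supported.
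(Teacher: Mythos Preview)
Your proof is correct and follows essentially the same approach as the paper: both directions hinge on the observation that a nontrivial element of $\ker(\hat{f}|_{S+e})$ must have nonzero $e$-coefficient (by independence of $S$), which lets one pass between a dependence on $S+e$ and a representation $f(e)=\hat{f}(c_S)$ with $c_S\in C_S(f)$. The paper argues the forward implication by contrapositive whereas you argue it directly, and you are more careful than the paper in flagging that $C(f)$ must be closed under the vector space operations for these manipulations to be valid, but the underlying argument is the same.
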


\begin{proof}
Suppose $S$ is independent and there is some $e$ such that
\begin{equation*}
f(e)\notin\hat{f}\left(C_{S}(f)\right).
\end{equation*}

 This implicitly assumes that $e\notin S$. Lets assume that $S+e=S\cup\{e\}$ is dependent. Then any nontrivial dependence $c$ of $S+e$ must have $c(e)\neq0$. Then

\begin{equation*}
-c(e)f(e)=\hat{f}(c)-c(e)f(e).
\end{equation*}

 Notice, $\hat{f}(c)-c(e)f(e)$ can be written as an element of $\hat{f}\left(C_{S}(f)\right)$ since the coefficient of $f(e)$ is $c(e)-c(e)=0$. We conclude that $f(e)\in\hat{f}\left(C_{S}(f)\right)$. Thus, this contradiction shows that $S+e$ must be an independent set as well. So $S$ is not maximal.

Conversely, suppose that $S$ is independent and $f(e)\in\hat{f}\left(C_{S}(f)\right)$ for all $e\in E$. If $S=E$, $S$ is obviously maximal. Otherwise, assume there is some $e_{0}\in E\setminus S$. Then there is some $c\in C_{S}(f)$ such that 
\begin{equation*}
\hat{f}\left(C_{S}(f)\right)=f(e_{0}).
\end{equation*}
  We define a function $c_{0}:E\rightarrow k$ as follows. We set $c_{0}(s)=c(s)$ for all $s\in S$, $c_{0}(e_{0})=-1$ and $c_{0}(e)=0$ for all $e\in E\setminus\left(S\cup\{e_{0}\}\right)$. This $c_{0}$ is a nontrivial dependence for $S\cup\{e_{0}\}$. Thus, adding any new element to $S$ will give us a dependent set. So $S$ is maximal.
\end{proof}
\end{section}
   

       
   \backmatter
   
   \bibliographystyle{amsalpha-fi-arxlast}
   \bibliography{DissertationBibliography}
\end{document}